\documentclass[a4paper]{article}
\usepackage{graphicx}
\usepackage{amsmath,amsthm,amssymb,enumerate,tikz-cd}
\usepackage{thm-restate}
\usepackage{mathtools}
\usepackage{hyperref}
\usepackage[capitalize]{cleveref}
\DeclareMathOperator{\Ker}{Ker}

\newtheorem{THM}{Theorem}[section]
\newtheorem{LEM}[THM]{Lemma}
\newtheorem{COR}[THM]{Corollary}

\newtheorem{PROP}[THM]{Proposition}

\def\specrelabove#1#2{\mathrel{\mathop{\kern0pt #1}\limits^{#2}}}
\def\specrelbelow#1#2{\mathrel{\mathop{\kern0pt #1}\limits_{#2}}}

\newcommand\N{\mathbb N}

\newcommand\R{\mathbb R}
\newcommand\Z{\mathbb Z}
\newcommand\join{\vee}
\newcommand\meet{\wedge}

\def\lowfwd #1#2#3{{\mathop{\kern0pt #1}\limits^{\kern#2pt\raise.#3ex \vbox to 0pt{\hbox{$\scriptscriptstyle\rightarrow$}\vss}}}}

\def\lowbkwd #1#2#3{{\mathop{\kern0pt #1}\limits^{\kern#2pt\raise.#3ex
\vbox to 0pt{\hbox{$\scriptscriptstyle\leftarrow$}\vss}}}}
\def\vp{\lowfwd p{1.5}2}
\def\pv{\lowbkwd p{1.5}2}

\def\vr{\lowfwd r{1.5}2}

\def\vS{\vec S}

\def\vSdash{{\mathop{\kern0pt S\lower-1pt\hbox{${}
     \scriptstyle'$}}\limits^{\kern2pt\raise.1ex
     \vbox to 0pt{\hbox{$\scriptscriptstyle\rightarrow$}\vss}}}}
\def\vs{\lowfwd s{1.5}2}
\def\sv{\lowbkwd s{1.5}2}
\def\vsdash{{\mathop{\kern0pt s\lower.5pt\hbox{${}
     \scriptstyle'$}}\limits^{\kern0pt\raise.02ex
     \vbox to 0pt{\hbox{$\scriptscriptstyle\rightarrow$}\vss}}}}
\def\svdash{{\mathop{\kern0pt s\lower.5pt\hbox{${}
     \scriptstyle'$}}\limits^{\kern0pt\raise.02ex
     \vbox to 0pt{\hbox{$\scriptscriptstyle\leftarrow$}\vss}}}}

\def\vt{\lowfwd t{1.5}2}

\def\vV{\vec V}

\def\vU{\vec U}
 \def\vv{\lowfwd v{1.5}2}
\def\vvback{\lowbkwd v{1.5}2}

\def\vx{\lowfwd x{1.5}2}
\def\xv{\lowbkwd x{1.5}2}
\def\vxdash{{\mathop{\kern0pt x\lower.5pt\hbox{${}
     \scriptstyle'$}}\limits^{\kern0pt\raise.02ex
     \vbox to 0pt{\hbox{$\scriptscriptstyle\rightarrow$}\vss}}}}
\def\xvdash{{\mathop{\kern0pt x\lower.5pt\hbox{${}
     \scriptstyle'$}}\limits^{\kern0pt\raise.02ex
     \vbox to 0pt{\hbox{$\scriptscriptstyle\leftarrow$}\vss}}}}
\def\vX{\vec X}
\def\vY{\vec Y}
\def\vy{\lowfwd y{1.5}2}
\def\yv{\lowbkwd y{1.5}2}

\def\vcB{\vec{\mathcal{B}}}

\def\es{\emptyset}
\def\sub{\subseteq}

\def\sm{\smallsetminus}

\def\partialone{{\partial}}
\def\deltanought{{\delta}}
\def\gammanought{{\gamma}}
\def\gammaone{{\gamma}}

\newcommand{\pull}{\prescript{\triangleright}{}}
\newcommand{\pullE}{\prescript{X}{}}
\newcommand{\pullright}{\prescript{\triangleleft}{}}
\newcommand{\abs}[1]{\left |#1\right |}

\newcommand\COMMENT[1]{}
%
\def\?#1{\vadjust{\vbox to 0pt{\vss\vskip-8pt\leftline{%
     \llap{\hbox{\vbox{\pretolerance=-1
     \doublehyphendemerits=0\finalhyphendemerits=0
     \hsize30truemm\tolerance=10000\small
     \lineskip=0pt\lineskiplimit=0pt
     \rightskip=0pt plus16truemm\baselineskip8pt\noindent
     \hskip0pt        
     #1\endgraf}\hskip7truemm}}}\vss}}}

\title{Duality and tangles of set separations}
 \author{Reinhard Diestel \and Christian Elbracht \and Joshua Erde\and Maximilian Teegen}
 \date{}

\begin{document}
\abovedisplayshortskip=-3pt plus3pt
\belowdisplayshortskip=6pt

\maketitle

\begin{abstract}\noindent
  Applications of tangles of connectivity systems suggest a duality between these, in which for two sets $X$ and~$Y\!$ the elements $x$ of~$X$ map to subsets $Y_x$ of~$Y\!$, and the elements $y$ of~$Y\!$ map to subsets $X_y$ of~$X$, so that $x\in X_y$ if and only if $y\in Y_x$ for all $x\in X$ and $y\in Y\!$. We explore this duality, and relate the tangles arising from the dual systems to each other.
   \end{abstract}

\section{Introduction}

Tangles are a relatively novel, indirect, notion of cluster in discrete structures. While originally introduced by Robertson and Seymour~\cite{GMX} for graphs and matroids, they have since been generalised to a much wider range of settings~\cite{TangleBook,ASS}.\looseness=-1

Rather than specifying a cluster by naming its elements, tangles describe clusters indirectly by a set of pointers. This makes them particularly well suited to capturing `fuzzy' clusters: clusters that appear obvious when viewed from a distance, but where allocating individual points to any of them can present a problem.

More precisely, if $C$ is a cluster in a data set~$V\!$, a~tangle will capture its location by orienting some suitable bipartitions of~$V\!$ towards their side that contains most of~$C$. If $S$ is a set of bipartitions of~$V\!$ each oriented towards~$C$ in this way, we say that the partitions in~$S$ are oriented \emph{consistently\/}: not arbitrarily, but all towards that fixed cluster~$C$.

Clearly, this can be done reliably only if each~$s\in S$ splits $C$ unevenly, if one of its sides contains noticeably more of~$C$ than the other. If $s$ has this property with respect to every cluster in~$V\!$, let us call it a {\em bottleneck partition\/} of~$V\!$.\looseness=-1

The idea behind tangles, now, is to offer axiomatic definitions of both `bottle\-neck partitions' and `consistent' that do not refer to any pre-conceived clusters but still bear out this idea: definitions such that most concrete examples of a set~$C$ of points in a dataset~$V\!$ which most people would intuitively see as a cluster are divided unevenly by those bipartitions of~$V\!$ which the axioms designate as `bottleneck partitions', and hence orient them, and such that the orientations of the bottleneck separations designated as `consistent' are those induced in this way by some intuitively perceived cluster. With such axiomatic definitions of bottleneck partitions, and of consistency for their collective orientations, in place, a~{\em tangle\/} is then defined as a consistent orientation of all the bottleneck partitions.

In graphs, there are some natural notions of bottleneck partitions. For example, one could take the vertex bipartitions that are crossed by few edges.%
  \footnote{Tangles orienting these are known as {\em edge-tangles\/} of the graph. Its {\em vertex-tangles\/}, those considered by Robertson and Seymour, orient bipartitions of the edge set, the bottleneck partitions being those for which few vertices are incident with edges on both sides.}
  For bipartitions of an arbitrary data set it is not so clear how to identify a set of bipartitions that will split any perceived cluster unevenly, and can hence serve as `bottleneck partitions', without referring to such clusters. Indeed one of the challenges in defining tangles in such a general setting lies in coming up with a notion of bottleneck partition that satisfies the requirements of tangle theory, such as submodularity.

In a typical clustering application this is facilitated by the existence of a natural {\em similarity function\/} $\sigma\colon V^2\to\N$, which assigns to a pair $(u,v)$ of points a large value if these points are deemed to be similar and should therefore be clustered together. For points in the Euclidean plane, for example, this could be the multiplicative inverse of their Euclidean distance. Note that although any such function appeals to some extrinsic notion of similarity of pairs of points, assuming the existence of such a function is much weaker than knowing how to cluster them, at least in the sense of partitioning~$V\!$ into `groups of similar points'. Indeed the rationale of tangles is that they offer a way to cluster~$V\!$ without the need to come up with such a partition.%
   \footnote{However, tangles give rise to a `fractional' such partition, which assigns every point $v$ to every tangle~$\tau$ with a weight between 0 and~1: the proportion of the bottleneck partitions for which $v$ lies on the side towards which $\tau$ orients them.}

As an example, consider the set $V\!$ of items for sale in an online shop. Two items $u,v\in V\!$ might be considered similar if they are often bought together. To measure this, we consult our sales records and let $P$ be the set of last year's purchases: every $p\in P$ partitions~$V\!$ into the set~$\vp\sub V\!$ of items bought in that purchase and the set~$\pv\sub V\!$ of items not bought. The similarity of two items $u$ and~$v$ might then be defined as the number $\sigma(u,v) := |\{\,p\in P \mid u,v\in\vp\,\}|$ of purchases that included both $u$ and~$v$. We can then define the {\em order\/} of a bipartition $s=\{A,B\}$ of~$V\!$ as the sum $\sum_{a\in A, b\in B}\sigma(a,b)$ over all pairs $a\in A$ and $b\in B$, and call $s$ a {\em bottleneck partition\/} if its order is below some threshold~$k$ (which we are free to set): if it does not divide many similar items. A~{\em $k$-tangle\/}, then, will be a {\em consistent\/} way of orienting all the bipartitions of~$V\!$ of order~$<k$, with respect to some formal notion of consistency depending on the type of tangle we consider.%
   \footnote{The formal definition of consistency is key to any notion of a tangle~\cite{TangleBook,ASS}. There are some minimum requirements, but within these there is room for choices depending on the intended application, giving rise to different types of tangle. Consistency is always defined in such a way that it is satisfied by any intuitively obvious cluster $C\sub V\!$ of points, in that orienting all the partitions of order~$<k$ towards where most of~$C$ lies will meet this definition.}
   The order $k$ of a $k$-tangle can be thought of as a measure of the \emph{coherence} of the cluster identified by this tangle: roughly, a tangle of order $k$ cannot be split into two roughly equal sized parts by a separation of order~$<k$.

In the example just discussed, we have outlined a way of clustering the items $v\in V\!$ of our online shop by appealing to the set $P$ of last year's purchases, each of which we interpreted as a bipartition of~$V\!$. (These were not the bottleneck partitions that our tangles ended up orienting, but bipartitions of~$V\!$ nonetheless: those we needed in order to determine which other bipartitions~$s$ we wished to consider as bottlenecks and thus have oriented by our tangles.) In the same way, however, we can view the shop items as partitions of the set~$P$ of purchases: every $v\in V\!$ partitions~$P$ into the set $\vv\sub P$ of purchases that included~$v$ and the set~$\vvback\sub P$ of purchases that did not include~$v$. These partitions of~$P$, then, can be used to define a similarity measure on the pairs of purchases, where two purchases $p,q\in P$ are deemed to be similar if they share many items. Formally, we would set $\sigma(p,q) := |\{\,v\in V \mid p,q\in \vv\,\}|$, in analogy to our earlier scenario. And once more, we can now proceed to use the similarity function on~$P\times P$ to define bottleneck partitions of~$P$, and consider the tangles of those partitions.

The two ways of describing the sales history of our online shop just outlined are not only analogous, they are dual in the formal sense that $v\in\vp$ if and only if $p\in\vv$. Their tangles are also related, but they are not the same. While the tangles we considered first capture fuzzy clusters of shop items that are similar in one way or another (with a tangle each for different aspects under which items can be similar), the tangles we considered afterwards describe fuzzy clusters of like-minded customers,%
   \footnote{Let us assume for simplicity that these are distinct for different purchases. This can easily be achieved by congregating the purchases of a given customer into one.}
   again with a tangle each for every aspect of taste or other type of our customers' purchasing motivation.
   
   An application of tangles viewed most naturally as those of a dual rather than a primal system will be given in~\cite{Big5}. Whether or not we view a separation system as primal or dual is technically just a matter of choice, but this choice has ramifications. For example, the order function used in~\cite{Big5}, which is based on the information-theoretic notion of {\em mutual information\/} based on discrete entropy~\cite{ShannonEntropy}, has a direct interpretation in which the technical meaning of `information' coincides with its natural meaning. There is no such natural entropy-based order function for primal separation systems; see~\cite{TangleOrder} for more.

Our aim is to explore this duality between the bipartitions $p\in P$ of~$V\!$ and the bipartitions $v\in V\!$ of~$P$, and to investigate how their tangles may be related. In this paper we take a first step towards this goal by relating, more generally, arbitrary partitions $(A,B)$ of $V\!$ and $(C,D)$ of~$P$, and see how their tangles are related. There is a natural way to do this: given~$(A,B)$, let $C$ be the set of purchases which contain more items from~$A$ than items from~$B$, and $D$ the set of purchases containing more items from~$B$ than from~$A$.%
   \footnote{Our formal setup will take care also of purchases for which equality holds here, see below.}
   Similarly, for any oriented bipartition $(C,D)$ of $P$ we have a corresponding oriented bipartition $(A,B)$, where $A$ is the set of items included in more purchases from~$C$ than from~$D$, and similarly for~$B$. Whilst these two operations are not inverse to each other, they are closely related. And we will show that they relate the tangle structure of the bipartitions of $V\!$ to that of the bipartitions of~$P$. 

We begin in Section~\ref{Prelim} by recalling the fundamental terminology and instances of separation systems. More details about separation systems, and in particular about their tangles, can be looked up in~\cite{ASS}. In Section~\ref{SetPartitions} we then state the duality of separation systems more formally.

Section~\ref{Tangles}, which contains our main results, relates the tangles of a pair of separation systems as earlier to each other. That is, we shall find natural order functions on the separations of $V\!$ and of~$P$ such that the correspondence between these separations outlined above gives rise to a correspondence between their tangles. More precisely, every $4k$-tangle of~$V\!$ will give rise to a $k$-tangle of~$P$, and vice versa.

Moreover this correspondence is in some sense idempotent in that, if we go from a tangle of~$V\!$ to a tangle of~$P$ and back again, we recover a restriction of the original tangle to a lower order. Interpreted in the setting of our online shop example, where~$V\!$ is the set of items in the shop and $P$ is the set of purchases in a given year, this implies that any coherent enough cluster of items will give rise to a cluster of purchases, and vice versa. Due to the idempotence of our correspondence between separations of~$V$ and~$P$, these two clusters should be related.

In Section~\ref{Hom}, finally, we note that the duality of set partitions can be cast in an algebraic framework, by defining a natural boundary operator on the oriented bipartitions of a given set. It then becomes an instance of algebraic duality, as between the homology and cohomology defined by this boundary operator. This gives rise to a host of questions that can be addressed purely algebraically. This is done in~\cite{HypergraphHomology}, while questions that are more directly related to clustering and tangles are addressed here. These include the introduction of an inner product of set partitions, whose associated notion of orthogonality agrees with geometric orthogonality in certain simple cases. As this inner product is defined for arbitrary finite set partitions, such as in our online shop example, it might therefore deserve further study.

\section{Basic terminology}\label{Prelim}
Throughout this paper, whenever we speak of `partitions' of a set we shall mean bipartitions, that is, partitions into two disjoint subsets. These subsets are allowed to be empty, although sometimes we may require that they are not.

We use the basic notions of abstract separation systems and their tangles as defined in~\cite{ASS}. For the convenience of the reader, we recall those of these concepts that are essential to this paper in this section.

An \emph{(abstract) separation system} is a partially ordered set $(\vS,\le)$ with an
order-reversing involution~$^\ast$. That is to say, $\vr\le\vs$ if and only if $\vr^\ast\ge \vs^\ast$ for all $\vr,\vs\in\vS$. The image of $\vs$ under $^\ast$ is usually denoted as $\sv$.
The elements of $\vS$ are called \emph{oriented separations}, and $\sv$ is the \emph{inverse} of~$\vs$. Note that a given element of~$\vS$ can be denoted as either $\vs$ or~$\sv$ according to context; there are no `default orientations'.

An oriented separation $\vs$ together with its inverse $\sv$ form an \emph{unoriented separation} $s = \{\vs, \sv\}$, and we say that $\vs$ and $\sv$ are \emph{orientations} of $s$. The set of all unoriented separations of a given separation system~$\vS$ is denoted as $S$.

If such a separation system $(\vS,\le)$ happens to be a lattice, that is, if there is a supremum $\vs\join \vt$ and an infimum $\vs\meet \vt$ for any two $\vs,\vt\in \vS$, then this separation system is a \emph{universe of separations}.

An {\em order function\/} on a universe~$\vU$ of separations is any real-valued function $\abs{\cdot}$ satisfying $0\le \abs{\vs}=\abs{\sv}$ for all $\vs\in\vU$. We often abbreviate this as~$\abs{s}$. An order function on~$\vU$ is \emph{submodular} if \[\abs{\vr \vee \vs}+\abs{\vr \wedge \vs}\le \abs{\vr}+\abs{\vs}\] for all $\vr,\vs\in\vU$.
If a universe comes with a submodular order function, we call it a \emph{submodular universe} of separations.
 
Given a separation system $\vS$, an \emph{orientation} of $S$ is a set $O$ that contains exactly one of $\vs$ and $\sv$ for every $\vs\in \vS$. A \emph{partial orientation} of $S$ is an orientation of any subset of~$S$.

Given a submodular order function on a universe~$\vU$ of separations, a separation system $\vS\sub\vU$, and a number~$k>0$, we denote as $\vS_k$ the separation system consisting of all the separations in~$\vS$ that have order less than~$k$. If $O$ is an orientation of some~$S_k$ and $k'<k$, we say that the orientation $O':=O\cap \vS_{k'}$ of~$\vS_{k'}$ is the \emph{restriction of $O$ to order $k'$}.

In this paper, we consider only the two most basic instances of separation systems: partitions and, more generally, `separations' of sets. Given a set $V\!$, an \emph{unoriented separation} of~$V\!$ is a set $\{A,B\}$ of subsets $A,B$ of $V\!$ such that $A\cup B=V\!$. The ordered pairs $(A,B)$ and $(B,A)$ are the two \emph{orientations} of this separation; the sets $A$ and~$B$ are its {\em sides\/}. Conversely, any ordered pair~$(A,B)$ of subsets of~$V\!$ with $A\cup B = V\!$ is an \emph{oriented separation} of~$V$; then $\{A,B\}$ is its \emph{corresponding unoriented separation}.

The oriented separations of a set come with a natural partial ordering: let $(A,B)\le (C,D)$ if both $A\subseteq C$ and $D\subseteq C$. 
With this partial ordering, and the involution~$^\ast$ mapping every $(A,B)$ to $(B,A)$, they form a separation system. 
The set of \emph{all} the separations of a set $V\!$ form a universe of separations, in which $(A,B)$ and $(C,D)$ have supremum $(A \cup B, C \cap D)$ and infimum $(A \cap B, C \cup D)$. 

The partitions%
   \footnote{Recall that `partitions' in this paper are always partitions into two sets.}
   of a set~$V\!$ form a subset of its separations: those whose sides are disjoint. As the supremum and infimum of two partitions of~$V\!$ are also partitions of~$V\!$, these form their own universe inside the universe of all the separations of~$V\!$.

Generally a \emph{tangle} is an orientation $\tau$ of a separation system $\vS$ which satisfies some additional `consistency conditions'.
The choice of the condition depends on the context, not least on the type of the separation system $\vS$ at hand.
For example, the typical consistency condition for a system of some separations of a set $V\!$ is to ask that for any three oriented separations $(A_1, B_1), (A_2, B_2), (A_3, B_3) \in \tau$ the union $A_1 \cup A_2 \cup A_3$ is not the entirety of $V\!$. This is the condition that we will use in Section~\ref{Tangles}.

Finally, we use the usual graph theoretic notation from \cite{DiestelBook16noEE}. In particular, given a graph $G=(V,E)$, sets $A,B\subseteq V\!$ of vertices, and a vertex $v\in V\!$, we write $N(v)$ for the set of neighbours of~$v$ in~$G$ and $N(A)$ the set of neighbours outside~$A$ of vertices in~$A$. We write $E(A,B)$ the set of edges from $A$ to~$B$, and as $E(A)$ the set $E(A,V\sm A)$ of edges leaving~$A$.

\section{Duality of set separations}\label{SetPartitions}

\subsection{An example of duality for set partitions}\label{example}

Consider a bipartite graph~$G$ with partition classes $X$ and~$Y\!$. For
 $$\vx := \{\,y\in Y\mid xy\in E(G)\,\}\quad\hbox{and}\quad\vy := \{\,x\in X\mid xy\in E(G)\,\}$$
we then have
$$x\in\vy\ \Leftrightarrow\ y\in\vx\eqno (*)$$
for all $x\in X$ and $y\in Y\!$. With $\xv := Y\sm\vx$ and $\yv := X\sm \vy$, the unoriented pairs $\{\vx,\xv\}$ and $\{\vy,\yv\}$ then form partitions of~$Y\!$ and $X$, respectively.

Given~$E(G)$, every $x$ determines the set~$\{\vx,\xv\}$, and every $y$ determines the set~$\{\vy,\yv\}$. If these correspondences are 1--1, i.e., if there are no $x\ne x'$ with $\{\vx,\xv\} = \{\vxdash,\xvdash\}$,\footnote{Note that this can happen in two ways: that $\vx=\vxdash$ or that $\vx=\xvdash$.} and similarly for the~$y$, we may choose to ignore the formal difference between $x$ and~$\{\vx,\xv\}$, and between $y$ and~$\{\vy,\yv\}$. Then $X$ becomes a set of partitions of~$Y\!$, and $Y\!$ a set of partitions of~$X$.

Let us consider $\vx$ as shorthand for the orientation~$(\xv,\vx)$ of the partition $x=\{\vx,\xv\}$ of~$Y\!$ towards its side~$\vx$, and similarly for $\xv$, $\vy$ and~$\yv$. Then
$$\vX := \{\,\vx\mid x\in X\,\}\cup \{\,\xv\mid x\in X\,\}$$
$$\vY := \{\,\vy\mid y\in Y\,\}\cup \{\,\yv\mid y\in Y\,\}$$
 are the sets of all orientations of the elements of~$X$ or~$Y\!$, respectively, and $(\vX,\le)$ and $(\vY,\le)$ form separation systems in the sense of~\cite{ASS}. In view of~$(*)$, we think of these separation systems as \emph{dual} to each other.

Let us next see how to dualise any given system of set partitions: how to define another system of set partitions so that the two are instances of~$(\vX,\le)$ and~$(\vY,\le)$ for a suitable bipartite graph as above.

\subsection{Dualising a given system of set partitions}\label{DualityNaive}

Given any separation system $(\vS,\le)$ consisting of partitions of a set~$V\!$, let us define a separation system $(\vV,\le)$ of partitions of~$S$ that is dual to~$(\vS,\le)$ in the sense of Section~\ref{example}.

We start by picking for every $s\in S$ a default orientation, which we denote as~$\vs$ (rather than~$\sv$). If we think of $\vs = (A,B)$ as the side~$B\sub V\!$ to which it points (so that $\sv$ is equated with~$A$ by the same token), then for every $v\in V\!$ the sets
$$\vv = \{\,s\in S\mid v\in\vs\,\}\quad\hbox{and}\quad\vvback = \{\,s\in S\mid v\in\sv\,\}$$
form a partition of~$S$. Let us assume that these partitions $\{\vv,\vvback\}$ differ for distinct $v\in V\!$, just as the sets $\{\vs,\sv\}$ differ for distinct~$s$ by definition of~$s$.%
   \footnote{Recall from~\cite{ASS} that separation systems are formally defined in such a way that their elements~$\vs$ are given first, and $s$ is then formally defined as~$\{\vs,\sv\}$. Hence if the~$s$ are distinct, as they are here by assumption, then this means that these 2-sets are distinct.}
  Then they determine their $v$ uniquely, and we may think of each $v$ as shorthand for~$\{\vv,\vvback\}$. This makes $V\!$ into a set of partitions $v = \{\vv,\vvback\}$ of~$S$ and
$$\vV := \{\,\vv\mid v\in V\}\>\cup\> \{\,\vvback\mid v\in V\}$$
into the set of all orientations of elements of~$V\!$, and we have
$$v\in\vs\Leftrightarrow s\in\vv\qquad \hbox{as well as}\qquad v\in\sv\Leftrightarrow s\in\vvback\eqno(**)$$
for all the elements $\vs,\sv$ of $\vS$ and $\vv,\vvback$ of~$\vV\!$.

Hence $(\vV,\le)$ and~$(\vS,\le)$ form an instance of a pair of dual separation systems as in Section~\ref{example}, based on a bipartite graph with vertex classes $V\!$ and~$S$ and edges~$vs$ whenever $v\in\vs$ (equivalently, $s\in\vv$).

\subsection{Duality of set separations}
Bipartitions are, and remain, the most important type of separation system whose duals are relevant in applications; our online-shop example from the introduction is a typical example. In order to develop our duality theory, however, it will be easier to work  in the more general context of set separations. These can even have a natural interpretation too: think of~$S$ as a set of questions answered by a set $V\!$ of people, whose answers can be `yes', `no' or `don't know'.

Let us thus adapt the definitions from Section~\ref{DualityNaive} to systems of general set separations rather than just set partitions. Given a system $(\vS, \le)$ of separations of a set~$V\!$, let us define a dual system $(\vV, \le)$ of separations of~$S$ as follows.
We start by picking for every $s\in S$ a default orientation, which we denote as~$\vs$. Unlike in the case of partitions, there is no longer a 1--1 correspondence between the separations $\vs = (A,B)$ and the sides~$B$ to which they point, as the map $\vs\mapsto B$ may fail to be injective. However, if we write $v\in\vs$ for $v\in B$ and $v\in\sv$ for~$v\in A$ informally, then for every $v\in V\!$ the sets
$$C_v = \{\,s\in S\mid v\in\sv\,\}\quad\hbox{and}\quad D_v = \{\,s\in S\mid v\in\vs\,\}$$
form a separation of~$S$. Let us assume that these separations $\{C_v,D_v\}$ differ for distinct $v\in V\!$, just as the sets $\{\vs,\sv\}$ differ for distinct~$s$ by definition of~$s$.
Then for $\vv := (C_v,D_v)$ and $\vvback := (D_v,C_v)$ the set $\{\vv,\vvback\}$ determines $v$ uniquely, and we may think of $v$ as shorthand for~$\{\vv,\vvback\}$. This
makes
$$\vV := \{\,\vv\mid v\in V\}\>\cup\> \{\,\vvback\mid v\in V\}$$
into a separation system whose elements are oriented separations of~$S$. Equating these informally, as above, with the subsets of~$S$ to which they point, we have
$$v\in\vs\Leftrightarrow s\in\vv\qquad \hbox{as well as}\qquad v\in\sv\Leftrightarrow s\in\vvback\eqno(***)$$
for all the elements $\vs,\sv$ of $\vS$ and $\vv,\vvback$ of~$\vV\!$. We may thus call $(\vS,\le)$ and $(\vV,\le)$ {\em dual\/} systems of set separations.

\subsection{The algebraic view}\label{DualityAlgebraic}

In Section~\ref{Hom} we shall point out that the duality defined in Section~\ref{DualityNaive} is an instance of the algebraic duality familiar from algebra or topology. Very roughly, we may consider as a boundary operator the map that sends an oriented separation of a set~$V\!$ to the formal sum of the of elements of~$V\!$ to which it points minus the others. Then the associated coboundary operator performs the dual operation of mapping the elements of~$V\!$ to (co-)chains of oriented separations in a way that the two maps commute, just as in our online shop example.

As we shall note in Section~\ref{sec:innerproduct}, our boundary operator gives rise to an inner product on the separations of~$V\!$ modulo its kernel. The notion of orthogonality  associated with this inner product coincides with geometric orthogonality if our set~$V\!$ is the unit disc and our separations are the partitions of~$V\!$ given by straight lines through the origin. But it is defined for all finite set separations, and thus adds an interesting geometric perspective to their study beyond this particular example.

\section{Tangles from dual separation systems}\label{Tangles}
In our running example of an online shop, we have found that each element $p \in P$ gives rise naturally to a partition of $V\!$, and vice versa. However, these are not the only set of partitions, or separations, of $V\!$ and $P$ that we shall have to consider. 

In order to apply the main tools and theorems of tangle theory to our given sets of partitions of $V\!$ and~$P$, we need to embed them in some richer systems of set partitions or separations. For example, we might need that the separation system we work with is submodular in the sense of~\cite{AbstractTangles}, or even arises as a set of all separations of $V\!$ or~$P$ that have order less than some constant~$k$ with respect to some submodular order function.

It will turn out that the dual nature of our given $V\!$ and $P$ will allow us to define quite a natural order function on all the separations of $V\!$ and~$P$, respectively, which we will prove is submodular. This order function is qualitatively similar to, but not identical to (see \cite{TangleBook}), the order function described in the introduction.

We can then study the tangle structure of the set of separations of $V\!$ or $P$ of order less than $k$ for any fixed $k \in \mathbb{N}$. We will show that the correspondence mentioned in the introduction between arbitrary partitions of $V\!$ and $P$ will extend to a correspondence between the low-order tangles of the separations of $V\!$ and $P$, allowing us to relate the tangle structure of $V\!$ to that of~$P$.

\subsection{Tangles on the sides of a bipartite graph}\label{sec:vtx_tangles_sets}
Let us make the discussion from the start of this section more precise. Suppose that we have a dual pair of separation systems $(\vX,\le)$ and $(\vY,\le)$, whose duality is witnessed by a bipartite graph $G$ with partition classes $X$ and $Y\!$ as in Section~\ref{example}. 

Let us denote by $S(X)$ the set of all separations of the set $X$, that is, the set of all sets $\{A,B\}$ with $A,B\subseteq X$ such that $A\cup B=X$. 
Similarly, we denote by $S(Y)$ the set of all separations of the set $Y\!$, and we denote by $\vS(X)$ and $\vS(Y)$ the set of oriented separations from $S(X)$ or $S(Y)$, respectively.

Then the structure of the bipartite graph $G$, which encodes the duality between $(\vX,\le)$ and $(\vY,\le)$, will allow us to relate the separations in $\vS(X)$ to the separations in $\vS(Y)$. Indeed, each separation in $\vS(X)$ induces a separation in~$\vS(Y)$ and vice versa, in the following manner.
Given a separation $(A,B)$ of $X$ there will be some vertices in $Y\!$ which are joined in $G$ to more vertices in $A$ than in $B$, while other vertices in $Y\!$ are joined to more vertices in $B$ than in $A$. This gives us a natural way to partition the vertices in $Y\!$. So, given $(A,B) \in \vS(X)$ we define the separation $(A,B)^\triangleright:=(A_B^\triangleright,B_A^\triangleright)\in \vS(Y)$ by letting \[A_B^\triangleright:=\{y\in Y \::\: |N(y)\cap A|\ge|N(y)\cap B|\}\] and \[B_A^\triangleright:=\{y\in Y \::\: |N(y)\cap A|\le|N(y)\cap B|\}.\] We call $(A,B)^\triangleright$ the \emph{shift} of $(A,B)$.

Similarly,\footnote{Informally, we think of the vertex classes $X,Y\!$ of $G$ as being its `left' and `right' class, respectively. Formally, however, $\{X,Y\}$ is an unordered pair, so the operators $\cdot^\triangleright$ and $\cdot^\triangleleft$ are formally the same: they map their argument, an oriented separation of one of the sets $X,Y\!$, to an oriented separation of the other set. It is important that we never treat $X$ and $Y\!$ differently in this paper: they are disjoint, but indistinguishable.} a separation $(C,D)$ of $Y\!$ gives rise to a separation of $X$, which we call its \emph{shift},  $(C,D)^\triangleleft:=(C_D^\triangleleft,D_C^\triangleleft)$ via \[C_D^\triangleleft:=\{x\in X \::\: |N(x)\cap C|\ge|N(x)\cap D|\}\] and \[D_C^\triangleleft:=\{x\in X \::\: |N(x)\cap C|\le |N(x)\cap D|\}.\]

We note that both these shifting operations commute with the natural involutions on $\vS(X)$ and $\vS(Y)$. However, we also note that this operation is not necessarily idempotent: there may exist separations $(A,B) \in \vS(X)$ such that ${((A,B)^\triangleright)^\triangleleft\neq (A,B)}$. 

The map $(\cdot)^\triangleright$ between $\vS(X)$ and $\vS(Y)$ induces an inverse `pull-back' map $\pullright(\cdot)$ between the power sets $2^{\vS(Y)}$ and $2^{\vS(X)}$, sending every $\tau\subseteq \vS(Y)$ to \[
\pullright \tau := \{ (A,B) \in \vS(X) \colon (A,B)^\triangleright\in \tau \}\subseteq \vS(X).
\]
Similarly, the map $(\cdot)^\triangleleft\colon\vS(Y)\!\to\vS(X)$ induces a map $\pull(\cdot)\colon2^{\vS(X)}\!\to 2^{\vS(Y)}$ sending every $\tau\subseteq \vS(X)$ to \[
\pull \tau := \{ (C,D) \in \vS(Y) \colon (C,D)^\triangleleft\in \tau \}\subseteq \vS(Y).
\]

The question then arises, under which conditions on a tangle $\tau$ will the subset $\pull \tau$ or $\pullright \tau$ also be a tangle? In order for there to be any interesting tangle structure we will have to restrict to some subset of $\vS(X)$ or $\vS(Y)$, and the most natural way to do so will be to choose some order function and consider the set $\vS_k(X)$ or $\vS_k(Y)$ of separations of order less than $k$. However, then in order for the pullback to have any hope of being a tangle, it must orient every separation in $\vS_{k'}(X)$ or $\vS_{k'}(Y)$ for some $k'$. Hence, already for this question to make sense, we will need to choose an appropriate order function which behaves nicely with respect to the shifting operation.

In fact, we will define order functions on $\vS(X)$ and $\vS(Y)$ so that shifting a separation never increases its order. This will guarantee that if $\tau$ orients all the separations of order less than $k$ in $\vS(X)$, then $\pull \tau$ orients all the separations of order less than $k$ in $\vS(Y)$. Indeed, if $(C,D) \in \vS(Y)$ has order less than $k$, then $(A,B):=(C,D)^{\triangleleft} \in \vS(X)$ has order less than $k$ and so precisely one of $(A,B)$ or $(B,A)$ is in $\tau$ by assumption. Since $(B,A)= (D,C)^{\triangleleft}$ it follows that precisely one of $(C,D)$ or $(D,C)$ is in $\pull \tau$.

Furthermore, these order functions are defined in a particularly natural way, determined only by the structure of $G$. Broadly, the order functions measure in some way how evenly a separation of $X$ or $Y\!$ splits the neighbourhood of each vertex from the appropriate class. For example, in our online shop example, the order of a separation $(A,B)$ of $V\!$ will be determined by how evenly this separation splits the set of items bought in each purchase. The more balanced the split, the larger the contribution of this vertex to the order of the separation. In this way, separations for which most vertices in the opposite partition class have a clear `preference' of one side or the other will have low order.

Explicitly, let us define the order function $\abs{\cdot}_X\colon \vS(X) \to \frac{1}{2}\N$ where
\[\abs{A,B}_X:=\sum_{y\in Y}\left(\min\{\abs{N(y)\cap A},\abs{N(y)\cap B}\} - \abs{N(y) \cap A \cap B} / 2 \right).\]
Here the first term will be larger when $N(y)$ is more evenly split by $(A,B)$. 

The extra term of $-\abs{N(y) \cap A \cap B} / 2$ is to adjust for double-counting: We can think of the term $\min\{\abs{N(y)\cap A},\abs{N(y)\cap B}\}$ as counting the number of edges from $y$ to the smaller of $N(y)\cap A$ and $N(y)\cap B$. However then, if $x$ is contained in both $A$ and $B$, we would count all the edges incident with $x$ in $\sum_{y\in Y}\min\{\abs{N(y)\cap A},\abs{N(y)\cap B}\}$, and so moving $x$ out of $A$ or $B$ would not increase the order even if it made the separation more balanced. With the extra term $-\abs{N(y) \cap A \cap B} / 2$ however, a neighbour $x$ in $A\cap B$ is treated as lying half in $A$ and half in $B$, in the sense that it is counted with a factor of $\frac{1}{2}$ in~$N(y)$.

Similarly, we define $\abs{\cdot}_Y\colon \vS(Y) \to \frac{1}{2}\N$ where
\[\abs{C,D}_Y:=\sum_{x\in X}\left(\min\{\abs{N(x)\cap C},\abs{N(x)\cap D}\} - \abs{N(x) \cap C \cap D} / 2 \right).\]

Note that these functions are symmetric and non-negative, as required of an order function for separation systems. 
Moreover, the function $\abs{\cdot}_X$ attains its maximum value on the separation $(X,X)$, and since orientations of all of $\vS(X)$ are not enlightening, we will in the following assume implicitly that any $\vS_k(X)$ we consider does not contain the separation $(X,X)$.

Less obviously, these order functions are submodular, and so $\vS(X)$ and $\vS(Y)$ equipped with these functions are submodular universes.
Submodularity is a fundamental property for order functions at the heart of tangle theory, and so we include the proof even though it is straightforward. However, the reader is invited to skip the proofs of the next two lemmas at first reading, to remain with the flow of the narrative.

\begin{LEM}\label{lem:order_submodular}
The order function $\abs{\cdot}_X$ is submodular.
\end{LEM}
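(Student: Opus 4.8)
The plan is to rewrite $\abs{\cdot}_X$ in a shape where submodularity is almost immediate, and then reduce the resulting inequality to a one-vertex statement that is settled by an elementary absolute-value identity. For $(A,B)\in\vS(X)$ and $y\in Y$ put $\delta_y(A,B):=\abs{N(y)\cap A}-\abs{N(y)\cap B}$. Since $N(y)\subseteq X=A\cup B$ we have $N(y)=(N(y)\cap A)\cup(N(y)\cap B)$, so inclusion--exclusion gives $\abs{N(y)\cap A}+\abs{N(y)\cap B}=\abs{N(y)}+\abs{N(y)\cap A\cap B}$; combining this with $\min\{p,q\}=\tfrac12(p+q-\abs{p-q})$ shows that the $y$-th summand of $\abs{A,B}_X$ equals $\tfrac12\bigl(\abs{N(y)}-\abs{\delta_y(A,B)}\bigr)$. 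Hence
\[\abs{A,B}_X=\tfrac12\sum_{y\in Y}\abs{N(y)}\;-\;\tfrac12\sum_{y\in Y}\abs{\delta_y(A,B)},\]
and since the first sum does not depend on $(A,B)$, the order function $\abs{\cdot}_X$ is submodular if and only if the map $(A,B)\mapsto\sum_{y\in Y}\abs{\delta_y(A,B)}$ is supermodular. Being a sum over $y\in Y$, it suffices to show, for each fixed $y$ and all $(A,B),(C,D)\in\vS(X)$, that
\[\abs{\delta_y(A\cup C,B\cap D)}+\abs{\delta_y(A\cap C,B\cup D)}\ \ge\ \abs{\delta_y(A,B)}+\abs{\delta_y(C,D)},\]
recalling that $(A\cup C,B\cap D)$ and $(A\cap C,B\cup D)$ are the join and meet of $(A,B)$ and $(C,D)$ in $\vS(X)$.

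To prove this one-vertex inequality I would expand everything along $N(y)$: for $w\in N(y)$ let $a_w,b_w,c_w,d_w\in\{0,1\}$ be the indicators of $w\in A,\,w\in B,\,w\in C,\,w\in D$, so that $\delta_y(A,B)=\sum_w(a_w-b_w)$, $\delta_y(C,D)=\sum_w(c_w-d_w)$, $\delta_y(A\cup C,B\cap D)=\sum_w\bigl(\max(a_w,c_w)-\min(b_w,d_w)\bigr)$ and $\delta_y(A\cap C,B\cup D)=\sum_w\bigl(\min(a_w,c_w)-\max(b_w,d_w)\bigr)$. Adding the last two and using $\max(s,t)+\min(s,t)=s+t$ gives $\Sigma:=\delta_y(A\cup C,B\cap D)+\delta_y(A\cap C,B\cup D)=\delta_y(A,B)+\delta_y(C,D)$; subtracting them and using $\max(s,t)-\min(s,t)=\abs{s-t}$ gives $\Delta:=\delta_y(A\cup C,B\cap D)-\delta_y(A\cap C,B\cup D)=\sum_w\bigl(\abs{a_w-c_w}+\abs{b_w-d_w}\bigr)\ge 0$, and applying the triangle inequality termwise and then to the whole sum yields $\Delta\ge\abs{\delta_y(A,B)-\delta_y(C,D)}$. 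Now the elementary identity $\abs p+\abs q=\max\{\abs{p+q},\abs{p-q}\}$ gives
\[\abs{\delta_y(A\cup C,B\cap D)}+\abs{\delta_y(A\cap C,B\cup D)}=\max\{\abs\Sigma,\Delta\},\]
which, since $\Sigma=\delta_y(A,B)+\delta_y(C,D)$ and $\Delta\ge\abs{\delta_y(A,B)-\delta_y(C,D)}$, is at least $\max\bigl\{\abs{\delta_y(A,B)+\delta_y(C,D)},\,\abs{\delta_y(A,B)-\delta_y(C,D)}\bigr\}=\abs{\delta_y(A,B)}+\abs{\delta_y(C,D)}$, again by the same identity. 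Summing over $y\in Y$ proves the lemma; the analogous statement for $\abs{\cdot}_Y$ follows verbatim with the roles of $X$ and $Y$ exchanged.

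I expect the one genuine step to be spotting the reformulation in the first paragraph: once $\abs{\cdot}_X$ is written as a constant minus $\tfrac12\sum_y\abs{\delta_y}$ — which is precisely what the correction term $-\abs{N(y)\cap A\cap B}/2$ is there to achieve — submodularity collapses to the one-vertex inequality, and the $\max\{\abs{p+q},\abs{p-q}\}$ trick disposes of that with no case distinctions. The remaining ingredients (the $\min$-identity, the inclusion--exclusion on $N(y)$, and the $\max/\min$ bookkeeping for the join and meet) are routine, so I would not dwell on them.
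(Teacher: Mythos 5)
Your proof is correct. Both you and the paper begin with the same first reduction, writing $\abs{\cdot}_X=\sum_{y\in Y}\abs{\cdot}_y$ and proving submodularity vertex by vertex, but from there the arguments diverge. The paper partitions each side into $A_i\sm B_i$, $B_i\sm A_i$ and $A_i\cap B_i$, normalises so that $N_{A_i}\le N_{B_i}$, computes the $\min$-terms of the join and meet explicitly, and then has to run the estimate twice (once for $(A_1,B_1),(A_2,B_2)$ and once for $(A_1,B_1),(B_2,A_2)$) to cover all orientations. You instead use $\min\{p,q\}=\tfrac12(p+q-\abs{p-q})$ together with $\abs{N(y)\cap A}+\abs{N(y)\cap B}=\abs{N(y)}+\abs{N(y)\cap A\cap B}$ to rewrite the $y$-th summand as $\tfrac12\bigl(\abs{N(y)}-\abs{\delta_y(A,B)}\bigr)$, so that submodularity becomes supermodularity of $\sum_y\abs{\delta_y}$, and you dispose of the one-vertex inequality with the identity $\abs p+\abs q=\max\{\abs{p+q},\abs{p-q}\}$ applied to $\Sigma$ and $\Delta$ (I checked the computations of $\Sigma$ and $\Delta$ via the indicators, the bound $\Delta\ge\abs{\delta_y(A,B)-\delta_y(C,D)}$, and the final $\max$ comparison; all are sound). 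Your route buys a case-free and orientation-symmetric argument, and it makes conceptually transparent why the correction term $-\abs{N(y)\cap A\cap B}/2$ is exactly the right one: it is what turns the summand into a constant minus $\tfrac12\abs{\delta_y}$. The paper's computation is more pedestrian but has the mild virtue of producing along the way the explicit formulas for $\abs{\text{join}}_y$ and $\abs{\text{meet}}_y$ in terms of the pieces $A_i',B_i',Z_i$, which resemble the bookkeeping used later in Lemmas \ref{lem:set_shift_order_fn} and \ref{lem:set_fixed_shift_order}.
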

\begin{proof}
 We show that $\abs{\cdot}_X$ is a sum of submodular functions. For this consider, for $y\in Y\!$, the order function on $\vS(X)$ given by \[\abs{A,B}_y:=\min\{\abs{N(y)\cap A},\abs{N(y)\cap B}\} - \abs{N(y) \cap A \cap B} / 2\] and note that $\abs{A,B}_X=\sum_{y\in Y}\abs{A,B}_y$, thus it is enough to show that $\abs{\cdot}_y$ is submodular for every $y\in Y\!$. Fix some $y$ in $Y\!$. For $Z \subseteq X$ we denote $N_Z := \abs{N(y) \cap Z}$.
 
Let $(A_1,B_1)$ and $(A_2,B_2)$  be separations in $\vS(X)$, and suppose without loss of generality that $N_{A_i} \leq N_{B_i}$. Let $A_i' := A_i \sm B_i$, $B_i' := B_i \sm A_i$ and $Z_i := A_i \cap B_i$. Note that $\abs{A_i,B_i}_y=N_{A_i'}+N_{Z_i}/2$.
 
We observe that \[\abs{A_1 \cap A_2, B_1 \cup B_2}_y=
    N_{A_1' \cap A_2'} + \frac{1}{2}( N_{Z_1 \cap Z_2} + N_{Z_1 \cap A_2'} + N_{A_1' \cap Z_2} ),
 \]
 and \[\abs{A_1 \cup A_2, B_1 \cap B_2}_y=
  \min\{ N_{A_1' \cup A_2'}, N_{B_1' \cap B_2'} \} + \frac{1}{2}( N_{Z_1 \cap Z_2} + N_{Z_1 \cap B_2'} + N_{B_1' \cap Z_2} ).
 \]
 Summing these two, we get
 \begin{align*}
     &\abs{A_1 \cap A_2, B_1 \cup B_2}_y+\abs{A_1 \cup A_2, B_1 \cap B_2}_y\\
     ={}& N_{A_1' \cap A_2'} + \frac{1}{2}( N_{Z_1 \cap Z_2} + N_{Z_1 \cap A_2'} + N_{A_1' \cap Z_2} ) \\
     +{}& \min\{ N_{A_1' \cup A_2'}, N_{B_1' \cap B_2'} \} + \frac{1}{2}( N_{Z_1 \cap Z_2} + N_{Z_1 \cap B_2'} + N_{B_1' \cap Z_2} ) \\
     \leq{}& N_{A_1' \cap A_2'} + N_{A_1' \cup A_2'} \\ 
     +{}& \frac{1}{2}( N_{Z_1 \cap Z_2} + N_{Z_1 \cap A_2'} + N_{Z_1 \cap B_2'} 
     + N_{Z_1 \cap Z_2} +  N_{A_1' \cap Z_2} + N_{B_1' \cap Z_2} ) \\
     ={}& N_{A_1'} + N_{A_2'} + \frac{1}{2}( N_{Z_1} + N_{Z_2} )\\
     ={}&\abs{A_1,B_1}_y+\abs{A_2,B_2}_y.
 \end{align*}
 
Similarly,  \[\abs{A_1 \cap B_2, B_1 \cup A_2}_y=
  \min\{ N_{A_1' \cap B_2'}, N_{B_1' \cup A_2'} \} + \frac{1}{2}( N_{Z_1 \cap Z_2} + N_{Z_1 \cap B_2'} + N_{A_1' \cap Z_2} ).
 \]
 and \[\abs{A_1 \cup B_2, B_1 \cap A_2}_y=
  \min\{ N_{A_1' \cup B_2'}, N_{B_1' \cap A_2'} \} + \frac{1}{2}( N_{Z_1 \cap Z_2} + N_{Z_1 \cap A_2'} + N_{B_1' \cap Z_2} ).
 \]
 Summing these two, we get
 \begin{align*}&\abs{A_1 \cap B_2, B_1 \cup A_2}_y+\abs{A_1 \cup B_2, B_1 \cap A_2}_y\\
    ={}& \min\{ N_{A_1' \cap B_2'}, N_{B_1' \cup A_2'} \} + \frac{1}{2}( N_{Z_1 \cap Z_2} + N_{Z_1 \cap B_2'} + N_{A_1' \cap Z_2} ) \\
     +{}& \min\{ N_{A_1' \cup B_2'}, N_{B_1' \cap A_2'} \} + \frac{1}{2}( N_{Z_1 \cap Z_2} + N_{Z_1 \cap A_2'} + N_{B_1' \cap Z_2} ) \\
     \leq{}& N_{A_1' \cap B_2'} + N_{A_2' \cap B_1'} \\ 
     +{}& \frac{1}{2}( N_{Z_1 \cap Z_2} + N_{Z_1 \cap A_2'} + N_{Z_1 \cap B_2'} 
     + N_{Z_1 \cap Z_2} +  N_{A_1' \cap Z_2} + N_{B_1' \cap Z_2} ) \\
     \le{}& N_{A_1'} + N_{A_2'} + \frac{1}{2}( N_{Z_1} + N_{Z_2} )\\
     ={}&\abs{A_1,B_1}_y+\abs{A_2,B_2}_y.
 \end{align*}
 Thus $\abs{\cdot}_y$ is submodular and so is $\abs{\cdot}_X=\sum_{y\in Y}\abs{\cdot}_y$
\end{proof}

Next, we show that the shifting operation does not increase the order of a separation.
For this we first show the following lemma, giving an alternative representation of the order function: 
\begin{LEM}\label{lem:set_shift_order_fn}
  For all $(A,B) \in \vS(X)$ we have
  \[
    \abs{A,B}_X = \abs{E(A_B^\triangleright, B)} + \abs{E(B_A^\triangleright, A)}
    - \abs{E(A_B^\triangleright \cap B_A^\triangleright, X)} / 2
    - \abs{E(Y, A \cap B)} / 2.
  \]
\end{LEM}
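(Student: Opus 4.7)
\bigbreak
\noindent
My plan is to expand the defining sum of $\abs{A,B}_X$ and match it term-by-term with the right-hand side by partitioning $Y$ according to the relative size of $\abs{N(y)\cap A}$ and $\abs{N(y)\cap B}$. Write $W := A_B^\triangleright \cap B_A^\triangleright$, which is the set of $y \in Y$ with $\abs{N(y)\cap A} = \abs{N(y)\cap B}$. Then $Y$ partitions into the three pieces $A_B^\triangleright\sm W$, $B_A^\triangleright\sm W$ and $W$, on which $\min\{\abs{N(y)\cap A}, \abs{N(y)\cap B}\}$ equals $\abs{N(y)\cap B}$, $\abs{N(y)\cap A}$, and either of these (by equality), respectively.

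The key algebraic ingredient is the inclusion-exclusion identity that, because $A \cup B = X$, we have $\abs{N(y)\cap A} + \abs{N(y)\cap B} = \abs{N(y)} + \abs{N(y) \cap A \cap B}$ for every $y \in Y$. In particular, for $y \in W$ we get $\abs{N(y)\cap A} = \abs{N(y)\cap B} = \tfrac12(\abs{N(y)} + \abs{N(y)\cap A\cap B})$, which is exactly the shape needed to produce the $\abs{E(W,X)}/2$ contribution of the right-hand side.

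Next I would translate vertex sums into edge counts: summing $\abs{N(y)\cap B}$ over $y \in A_B^\triangleright\sm W$ gives $\abs{E(A_B^\triangleright\sm W, B)}$, and using the decomposition $\abs{E(A_B^\triangleright, B)} = \abs{E(A_B^\triangleright\sm W, B)} + \abs{E(W, B)}$ (and the symmetric identity for $B_A^\triangleright$), one can assemble $\sum_{y\in Y}\min\{\abs{N(y)\cap A}, \abs{N(y)\cap B}\}$ out of $\abs{E(A_B^\triangleright, B)}$, $\abs{E(B_A^\triangleright, A)}$, and a correction on $W$ that is handled by the identity from the previous paragraph. Finally, the $-\abs{N(y)\cap A\cap B}/2$ piece of the definition sums directly to $-\abs{E(Y, A\cap B)}/2$.

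I expect the only genuine obstacle to be the bookkeeping: keeping straight which edges at $W$ or in $A \cap B$ are counted with weight $1$, which with weight $\tfrac12$, and which cancel between the two edge sums. Once the three contributions from the partition of $Y$ are written out in a single display and the inclusion-exclusion identity is applied on $W$, the result should drop out by collecting terms.
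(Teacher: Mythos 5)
Your decomposition of $Y$ into $A_B^\triangleright\sm W$, $B_A^\triangleright\sm W$ and $W:=A_B^\triangleright\cap B_A^\triangleright$, followed by the translation of vertex sums into edge counts, is exactly the route the paper takes. However, the step you defer to ``bookkeeping'' is precisely where the argument does not close. Your own inclusion--exclusion identity gives, for $y\in W$, that $\min\{\abs{N(y)\cap A},\abs{N(y)\cap B}\}=\tfrac{1}{2}(\abs{N(y)}+\abs{N(y)\cap A\cap B})$, whereas the right-hand side of the lemma subtracts only $\tfrac{1}{2}\abs{N(y)}$ for each such $y$ (via the term $\abs{E(W,X)}/2$); so this identity is \emph{not} ``exactly the shape needed''. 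Concretely, $\abs{E(A_B^\triangleright,B)}+\abs{E(B_A^\triangleright,A)}$ overcounts the $W$-part of $\sum_{y\in Y}\min\{\abs{N(y)\cap A},\abs{N(y)\cap B}\}$ by $\sum_{y\in W}\tfrac{1}{2}(\abs{N(y)\cap A}+\abs{N(y)\cap B})=\tfrac{1}{2}(\abs{E(W,X)}+\abs{E(W,A\cap B)})$, and carrying your plan through yields
\[
\abs{A,B}_X=\abs{E(A_B^\triangleright,B)}+\abs{E(B_A^\triangleright,A)}-\tfrac{1}{2}\abs{E(W,X)}-\tfrac{1}{2}\abs{E(W,A\cap B)}-\tfrac{1}{2}\abs{E(Y,A\cap B)},
\]
which differs from the asserted identity by the term $-\tfrac{1}{2}\abs{E(W,A\cap B)}$. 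This term need not vanish: take $X=\{x_1,x_2,x_3\}$ and $Y=\{y\}$ with $y$ joined to all of $X$, and $(A,B)=(\{x_1,x_2\},\{x_2,x_3\})$. Then $\abs{A,B}_X=2-\tfrac{1}{2}=\tfrac{3}{2}$, while the right-hand side of the lemma evaluates to $2+2-\tfrac{3}{2}-\tfrac{1}{2}=2$.

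So the obstacle you flag as mere bookkeeping is a genuine one, and your sketch as written would not compile into a proof of the stated identity. For what it is worth, the paper's own proof makes the identical elision: in its middle display it subtracts $\sum_{y\in W}\abs{N(y)}/2$ where the overcount is in fact $\sum_{y\in W}(\abs{N(y)}+\abs{N(y)\cap A\cap B})/2$. The identity as stated is correct exactly when no $y$ with $\abs{N(y)\cap A}=\abs{N(y)\cap B}$ has a neighbour in $A\cap B$ --- in particular for partitions, where $A\cap B=\es$ --- but not for general set separations. To make your write-up correct you should either add the missing term $-\abs{E(A_B^\triangleright\cap B_A^\triangleright,\,A\cap B)}/2$ to the right-hand side (and then check how this propagates into the subsequent monotonicity lemma for the shift, which cites this formula) or restrict the hypothesis accordingly.
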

\begin{proof}
  This can be calculated by rearranging sums:
  \begin{align*}
    &\abs{A,B}_X\\
    &={} \sum_{y\in Y}\left(\min\{\abs{N(y)\cap A},\abs{N(y)\cap B}\} - \frac{\abs{N(y) \cap A \cap B}}{2}\right) \displaybreak[0]\\
    &={} \sum_{\substack{y\in Y \\ \abs{N(y)\cap A} \geq \abs{N(y)\cap B}}}
    \abs{N(y)\cap B}+ \sum_{\substack{y\in Y \\ \abs{N(y)\cap B} \geq \abs{N(y)\cap A}}} \abs{N(y)\cap A} \\
    &\qquad - \Big(\sum_{\substack{y\in Y \\ \abs{N(y)\cap A} = \abs{N(y)\cap B}}}
    \frac{\abs{N(y)}}{2} + \sum_{y\in Y} \frac{\abs{N(y) \cap A \cap B}}{2} \Big)\\
    &={} \abs{E(A_B^\triangleright, B)} + \abs{E(B_A^\triangleright, A)} -
    \abs{E(A_B^\triangleright \cap B_A^\triangleright, X)} / 2
    - \abs{E(Y, A \cap B)} / 2.
    \qedhere
  \end{align*}
\end{proof}
With this we can now prove that shifting a separation indeed cannot increase 
the order of a separation:
\begin{LEM}\label{lem:set_fixed_shift_order}
Let $(A,B)$ be a separation of $X$, then $\abs{A,B}_X \geq 
\abs{A^\triangleright_B,B_A^{\triangleright}}_Y$. Similarly if $(C,D)$ is a 
separation of $Y\!$, then $\abs{C,D}_Y \geq 
\abs{C^\triangleleft_D,D^\triangleleft_C}_X$.
\end{LEM}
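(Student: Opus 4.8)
The plan is to first put the two order functions into a more transparent form. Using $\min\{a,b\}=\tfrac12(a+b-\abs{a-b})$ together with $\abs{N(y)\cap A}+\abs{N(y)\cap B}=\abs{N(y)}+\abs{N(y)\cap A\cap B}$ (which holds because $A\cup B=X\supseteq N(y)$), each summand of $\abs{A,B}_X$ becomes
\[
  \min\{\abs{N(y)\cap A},\abs{N(y)\cap B}\}-\frac{\abs{N(y)\cap A\cap B}}{2}
  =\frac12\Bigl(\abs{N(y)}-\abs{\,\abs{N(y)\cap A}-\abs{N(y)\cap B}\,}\Bigr),
\]
so that
\[
  \abs{A,B}_X=\frac12\sum_{y\in Y}\Bigl(\abs{N(y)}-\abs{\,\abs{N(y)\cap A}-\abs{N(y)\cap B}\,}\Bigr),
\]
and analogously for $\abs{C,D}_Y$ with any separation $(C,D)$ of $Y\!$. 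Since $\sum_{x\in X}\abs{N(x)}=\abs{E(G)}=\sum_{y\in Y}\abs{N(y)}$, writing $(C,D):=(A,B)^\triangleright=(A_B^\triangleright,B_A^\triangleright)$ the inequality $\abs{A,B}_X\ge\abs{C,D}_Y$ is equivalent to
\[
  \sum_{x\in X}\abs{\,\abs{N(x)\cap C}-\abs{N(x)\cap D}\,}\ \ge\ \sum_{y\in Y}\abs{\,\abs{N(y)\cap A}-\abs{N(y)\cap B}\,},
\]
and it suffices to prove this.

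To do so I would express both sides as signed counts of edges of $G$ running between the four sets $A\sm B,\,B\sm A\subseteq X$ and $C\sm D,\,D\sm C\subseteq Y\!$. On the right-hand side, the quantity $\abs{N(y)\cap A}-\abs{N(y)\cap B}$ is positive exactly for $y\in A_B^\triangleright\sm B_A^\triangleright=C\sm D$, negative exactly for $y\in B_A^\triangleright\sm A_B^\triangleright=D\sm C$, and zero for $y\in A_B^\triangleright\cap B_A^\triangleright=C\cap D$; using $\abs{N(y)\cap A}-\abs{N(y)\cap B}=\abs{N(y)\cap(A\sm B)}-\abs{N(y)\cap(B\sm A)}$ and summing over $y$, the right-hand side equals
\[
  \abs{E(C\sm D,\,A\sm B)}-\abs{E(C\sm D,\,B\sm A)}-\abs{E(D\sm C,\,A\sm B)}+\abs{E(D\sm C,\,B\sm A)}.
\]
For the left-hand side I would use the (in general suboptimal) sign vector $\eta\colon X\to\{-1,0,1\}$ with $\eta(x)=1$ on $A\sm B$, $\eta(x)=-1$ on $B\sm A$, and $\eta(x)=0$ on $A\cap B$; then the left-hand side is at least $\sum_{x\in X}\eta(x)\bigl(\abs{N(x)\cap C}-\abs{N(x)\cap D}\bigr)$, and expanding $\abs{N(x)\cap C}-\abs{N(x)\cap D}=\abs{N(x)\cap(C\sm D)}-\abs{N(x)\cap(D\sm C)}$ and summing shows that this equals
\[
  \abs{E(A\sm B,\,C\sm D)}-\abs{E(A\sm B,\,D\sm C)}-\abs{E(B\sm A,\,C\sm D)}+\abs{E(B\sm A,\,D\sm C)}.
\]
As $G$ is undirected, $\abs{E(P,Q)}=\abs{E(Q,P)}$ for all $P,Q$, so these two displayed expressions coincide term by term; hence the left-hand side is at least the right-hand side, which is what we needed. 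The claim for a separation $(C,D)$ of $Y\!$ follows by exchanging the roles of $X$ and $Y\!$ everywhere.

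The substance of the argument is the choice, on the left-hand side, of the sign vector $\eta$ determined by $(A,B)$ rather than the one that would realise those absolute values: this is exactly what makes both sides collapse to the same signed edge count. Once the order functions are rewritten as above, the rest is routine manipulation of double sums, and I expect no obstacle beyond careful bookkeeping of the four edge sets involved.
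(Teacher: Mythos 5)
Your proof is correct, and it is essentially the paper's argument in different notation: both hinge on bounding the order of the shift by substituting the (possibly suboptimal) choice of side dictated by $(A,B)$ at each vertex of the other class, and then identifying the result with $\abs{A,B}_X$ by double-counting edges between the four sets $A\sm B$, $B\sm A$, $A_B^\triangleright\sm B_A^\triangleright$, $B_A^\triangleright\sm A_B^\triangleright$. The paper routes this through an intermediate edge-count formula for $\abs{A,B}_X$ (its Lemma~\ref{lem:set_shift_order_fn}), whereas you use the rewriting $\min\{a,b\}=\tfrac12(a+b-\abs{a-b})$; the bookkeeping differs but the substance is the same.
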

\begin{proof} This is true by the following calculation:
\begin{align*}
    &\abs{A_B^\triangleright,B_A^\triangleright}_Y\\={}&\sum_{x\in 
X}\left(\min\{\abs{N(x)\cap A_B^\triangleright,N(x)\cap B_A^\triangleright}\}- 
\abs{N(x) \cap A_B^\triangleright \cap B_A^\triangleright} / 2\right)\\
    \le{}&\sum_{a\in A}\abs{N(a)\cap B_A^\triangleright}+\sum_{b\in B} 
\abs{N(b)\cap A_B^\triangleright}-\sum_{x\in A\cap B}\abs{N(x)}/2-\sum_{y\in 
A_B^\triangleright\cap B_A^\triangleright}\abs{N(y)}/2\\
    ={}&\sum_{b\in B_A^\triangleright}\abs{N(b)\cap A}+\sum_{a\in 
A_B^\triangleright} \abs{N(a)\cap B}-\sum_{y\in A_B^\triangleright\cap 
B_A^\triangleright}\abs{N(y)}/2-\sum_{x\in A\cap B}\abs{N(x)}/2\\
    ={}&E(A_B^\triangleright, B) + E(B_A^\triangleright, A) - 
E(A_B^\triangleright \cap B_A^\triangleright, X) / 2
    - E(Y, A \cap B) / 2\\
    ={}&\abs{A,B}_X.\qedhere
    \end{align*}
\end{proof}

Finally, in order to define the tangles of $\vS(X)$ and $\vS(Y)$ we need to define the notion of \emph{consistency} that we require our orientations to satisfy. There are a few natural choices that one could make here, however in most contexts it turns out that these definitions are in some sense weakly equivalent, in that tangles under any one definition tend to induce tangles of slightly lower order under the other definitions.

With that in mind, let us define a \emph{tangle of $\vS_k(X)$ (in $G$)} as an orientation $\tau$ of $\vS_k(X)$ which satisfies the following property: \[
\tag{$\dagger$} \textit{There are no } (A_1,B_1),(A_2,B_2),(A_3,B_3)\in \tau \textit{ with } A_1\cup A_2\cup A_3=X. \label{prop:Xtangle}
\]
We define tangles of $\vS_k(Y)$ in $G$ in a similar manner. This is perhaps the simplest definition to take, and is a direct analogue of the corresponding notion of `consistency' used to define tangles in matroids. We will discuss later in more detail the extent to which our results hold for tangles defined in terms of other notions of `consistency.'

We are now ready to state the main results of this section. We will show that, with the aid of this order function, we can relate the tangles of $\vS(X)$ to those of $\vS(Y)$.

\begin{THM}\label{thm:shifttangle_set}
Let $\tau$ be a tangle of $\vS_{4k}(X)$, then $\tau' := \pull\tau\cap \vS_k(Y)$ is a tangle of $\vS_k(Y)$.
\end{THM}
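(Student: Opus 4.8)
The plan is to verify the two requirements on a tangle of $\vS_k(Y)$: that $\tau'$ is an orientation of $\vS_k(Y)$, and that it satisfies~\eqref{prop:Xtangle}. The first is just the remark made immediately before the statement: for $(C,D)\in\vS_k(Y)$, Lemma~\ref{lem:set_fixed_shift_order} gives $\abs{(C,D)^\triangleleft}_X\le\abs{C,D}_Y<k<4k$, so $\tau$ orients the shift $(C,D)^\triangleleft$; since $(\cdot)^\triangleleft$ commutes with the involution, exactly one of $(C,D),(D,C)$ lies in $\pull\tau$, hence in $\tau'$. So the real work is~\eqref{prop:Xtangle}.

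Suppose for contradiction that $(C_1,D_1),(C_2,D_2),(C_3,D_3)\in\tau'$ with $C_1\cup C_2\cup C_3=Y$, and set $(A_i,B_i):=(C_i,D_i)^\triangleleft\in\tau$, so that $\abs{A_i,B_i}_X<k$ by Lemma~\ref{lem:set_fixed_shift_order}. The sets $A_i$ need not cover $X$, so I would pass to the larger separations $(\hat A_i,B_i)$ of~$X$, where $\hat A_i:=\{\,x\in X: 3\abs{N(x)\cap C_i}\ge\abs{N(x)}\,\}$. If $x\in A_i$ then $\abs{N(x)\cap C_i}\ge\abs{N(x)\cap D_i}$, and since $N(x)\subseteq Y=C_i\cup D_i$ this forces $2\abs{N(x)\cap C_i}\ge\abs{N(x)}$, so $A_i\subseteq\hat A_i$ and $(\hat A_i,B_i)$ is again a separation of $X$. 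The threshold $\tfrac13$ is chosen exactly so that $\hat A_1\cup\hat A_2\cup\hat A_3=X$: for every $x$, $N(x)\subseteq C_1\cup C_2\cup C_3$ gives $\max_i\abs{N(x)\cap C_i}\ge\abs{N(x)}/3$, so $x$ lies in $\hat A_i$ for the maximising~$i$.

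Next I would bound the order of $(\hat A_i,B_i)$. For fixed $y\in Y$, writing $e_y:=\abs{N(y)\cap(\hat A_i\sm A_i)}$, a short case check (using $\hat A_i\sm A_i\subseteq B_i\sm A_i$) shows that enlarging $A_i$ to $\hat A_i$ while keeping $B_i$ raises the $y$-summand of the order function $\abs{\cdot,\cdot}_X$ by at most $e_y/2$; summing over $y$ then yields $\abs{\hat A_i,B_i}_X\le\abs{A_i,B_i}_X+\tfrac12\sum_{x\in\hat A_i\sm A_i}\abs{N(x)}$. For $x\in\hat A_i\sm A_i$ we have $\abs{N(x)}\le 3\abs{N(x)\cap C_i}$ by definition of $\hat A_i$, while $x\notin A_i$ means $\abs{N(x)\cap C_i}<\abs{N(x)\cap D_i}$, so the summand contributed by $x$ to $\abs{C_i,D_i}_Y$ is $\abs{N(x)\cap C_i}-\abs{N(x)\cap C_i\cap D_i}/2\ge\tfrac12\abs{N(x)\cap C_i}$. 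Hence $\sum_{x\in\hat A_i\sm A_i}\abs{N(x)}\le 6\abs{C_i,D_i}_Y<6k$, and therefore $\abs{\hat A_i,B_i}_X<k+3k=4k$, i.e. $(\hat A_i,B_i)\in\vS_{4k}(X)$.

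Since $\tau$ orients $(\hat A_i,B_i)$, it must contain $(\hat A_i,B_i)$ itself: if instead $(B_i,\hat A_i)\in\tau$, then $(A_i,B_i),(B_i,\hat A_i),(B_i,\hat A_i)$ are three members of $\tau$ whose first coordinates have union $A_i\cup B_i=X$, contradicting~\eqref{prop:Xtangle} for $\tau$. Thus $(\hat A_1,B_1),(\hat A_2,B_2),(\hat A_3,B_3)\in\tau$ and their first coordinates cover $X$ — again contradicting~\eqref{prop:Xtangle} for~$\tau$, which completes the proof. The delicate step is the order estimate: the constant $4$ is precisely what the threshold $\tfrac13$ and the factor-$2$ slack in the ``$-\abs{N(x)\cap C\cap D}/2$'' correction term allow, so the term-by-term comparison has to be carried out carefully and leaves no room to spare. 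The only other point needing attention is that some $(\hat A_i,B_i)$ could in principle be the degenerate separation $(X,X)$; this requires $B_i=X$ and $\hat A_i=X$, hence $\abs{E(G)}<6k$, and is disposed of by a short direct argument (or is ruled out by the standing convention excluding such degenerate separations from the systems in play).
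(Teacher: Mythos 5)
Your proof is correct, and it is a genuinely different route from the one this paper takes. The paper deliberately proves Theorem~\ref{thm:shifttangle_set} \emph{indirectly}, by factoring the shift from $X$ to $Y$ through tangles of the edge separations $\vS(E)$ (Theorems~\ref{prop:vtx_to_edges} and~\ref{prop:edges_to_vtx}); the price is the weaker constant $8k$ of Corollary~\ref{thm:shifttangle_weaker}, and the direct proof with the stated constant $4k$ is deferred to the extended version. Your argument is a direct one and actually delivers the $4k$ of the theorem as stated. The key device — enlarging each $A_i=(C_i)^\triangleleft_{D_i}$ to $\hat A_i=\{x:3\abs{N(x)\cap C_i}\ge\abs{N(x)}\}$ so that the $\hat A_i$ cover $X$, then paying for the enlargement out of the order of $(C_i,D_i)$ — is the same ``enlarge one side until the triple covers'' idea the paper uses inside the proof of Theorem~\ref{prop:vtx_to_edges} (there via the set $Z=X\sm(A_1\cup A_2\cup A_3)$ and submodularity), but you apply it in one step to the $Y$-to-$X$ shift rather than routing through $E$. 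All the individual estimates check out: $A_i\subseteq\hat A_i$ and $\hat A_1\cup\hat A_2\cup\hat A_3=X$ follow from the $1/3$-threshold; the per-vertex increase $\min\{a+e,b\}-(z+e)/2\le(\min\{a,b\}-z/2)+e/2$ gives $\abs{\hat A_i,B_i}_X\le\abs{A_i,B_i}_X+\tfrac12\sum_{x\in\hat A_i\sm A_i}\abs{N(x)}$; the nonnegativity of the summands of $\abs{\cdot}_Y$ together with $\abs{N(x)}\le 3\abs{N(x)\cap C_i}$ and $\abs{N(x)\cap C_i\cap D_i}\le\abs{N(x)\cap C_i}$ yields $\sum_{x\in\hat A_i\sm A_i}\abs{N(x)}<6k$, hence $\abs{\hat A_i,B_i}_X<4k$; and the two-out-of-three application of $(\dagger)$ to force $(\hat A_i,B_i)\in\tau$ is legitimate since the three separations in $(\dagger)$ need not be distinct. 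The degenerate case $(\hat A_i,B_i)=(X,X)$ is indeed excluded by the standing convention that $(X,X)\notin\vS_{4k}(X)$, since your bound gives $\abs{X,X}_X<4k$ otherwise. What the paper's indirect route buys instead is the intermediate edge-tangle picture, which it then reuses for the idempotence statement (Corollaries~\ref{cor:double_shift} and~\ref{thm:double_shift_weaker}); your approach buys the sharper constant but would need a separate argument for Theorem~\ref{thm:double_shift_set}.
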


By symmetry, we then obtain a similar conclusion as in \cref{thm:shifttangle_set} when we shift a $4k$-tangle of $Y\!$.

A natural question then to ask at this point, is, even if the shifting operations themselves are not idempotent, whether the operation they induce on tangles is in some way `idempotent': That is, if we shift a tangle twice, do we end up with the original tangle? It turns out that, again up to a constant factor, this is indeed the case.

\begin{THM}\label{thm:double_shift_set}
    Let $\tau$ be a tangle of $\vS_{16k}(X)$, let $\tau' = \pull\tau \cap \vS_{4k}(Y)$, and let $\tau'' = \pullright\tau'\cap \vS_k(X)$. Then $\tau'' \subseteq \tau$.
\end{THM}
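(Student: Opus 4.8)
The plan is to unwind the two pull-back operations, reduce the claim to a single order estimate, and then derive a contradiction from property~$(\dagger)$.

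Let $(A,B)\in\tau''$; we must show $(A,B)\in\tau$. By definition, $(A,B)\in\tau''$ means $\abs{A,B}_X<k$ and $(A,B)^\triangleright\in\tau'$, and $(A,B)^\triangleright\in\tau'$ means $\abs{A_B^\triangleright,B_A^\triangleright}_Y<4k$ together with $(A',B'):=((A,B)^\triangleright)^\triangleleft\in\tau$. The two order inequalities here are automatic: applying \cref{lem:set_fixed_shift_order} twice gives $\abs{A_B^\triangleright,B_A^\triangleright}_Y\le\abs{A,B}_X<k<4k$ and $\abs{A',B'}_X\le\abs{A_B^\triangleright,B_A^\triangleright}_Y<k<16k$. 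Since $\tau$ orients all of $\vS_{16k}(X)$ and $\abs{A,B}_X<16k$, it contains exactly one of $(A,B)$ and $(B,A)$, so it suffices to rule out $(B,A)\in\tau$; suppose for contradiction that $(B,A)\in\tau$.

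Now I would look for a third separation in $\tau$ whose $A$-side, together with the $A$-sides $B$ and $A'$ of $(B,A)$ and $(A',B')$, covers $X$ --- this will contradict property~$(\dagger)$. Since $X=A\cup B=A'\cup B'$ we have $A\sm A'\subseteq B'$, hence $A\subseteq A'\cup(A\cap B')$ and so $B\cup A'\cup(A\cap B')=X$. Thus the separation to aim for is $(A\cap B',X)$, and everything reduces to showing $(A\cap B',X)\in\tau$. For that I would first bound its order. From the definition of $\abs{\cdot}_X$ one has $\abs{P,X}_X=\abs{E(Y,P)}/2$ for every $P\subseteq X$, so $\abs{A\cap B',X}_X=\tfrac12\sum_{x\in A\cap B'}\abs{N(x)}$. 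Every $x\in A\cap B'$ lies in $B'$, i.e.\ $\abs{N(x)\cap A_B^\triangleright}\le\abs{N(x)\cap B_A^\triangleright}$, and since $A_B^\triangleright\cup B_A^\triangleright=Y$ this forces $\abs{N(x)\cap B_A^\triangleright}\ge\abs{N(x)}/2$; summing over such $x$ gives $\abs{E(Y,A\cap B')}\le 2\abs{E(A\cap B',B_A^\triangleright)}\le 2\abs{E(A,B_A^\triangleright)}$. Finally, for $y\in B_A^\triangleright$ we have $\abs{N(y)\cap A}=\min\{\abs{N(y)\cap A},\abs{N(y)\cap B}\}$, so $\abs{E(A,B_A^\triangleright)}\le\sum_{y\in Y}\min\{\abs{N(y)\cap A},\abs{N(y)\cap B}\}=\abs{A,B}_X+\abs{E(Y,A\cap B)}/2\le 2\abs{A,B}_X$, where the last inequality holds because $\min\{\abs{N(y)\cap A},\abs{N(y)\cap B}\}\ge\abs{N(y)\cap A\cap B}$ gives $\abs{E(Y,A\cap B)}/2\le\abs{A,B}_X$. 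Combining, $\abs{A\cap B',X}_X\le 2\abs{A,B}_X<2k\le 16k$.

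Hence $\tau$ orients the separation $\{A\cap B',X\}$, and it cannot orient it as $(X,A\cap B')$ since three copies of $(X,A\cap B')$ would violate property~$(\dagger)$; so $(A\cap B',X)\in\tau$. But then $(B,A)$, $(A',B')$ and $(A\cap B',X)$ are three separations in $\tau$ whose $A$-sides $B$, $A'$, $A\cap B'$ have union $X$, contradicting property~$(\dagger)$. Therefore $(B,A)\notin\tau$, so $(A,B)\in\tau$, and $\tau''\subseteq\tau$. The one step that is more than bookkeeping is the order bound $\abs{A\cap B',X}_X\le 2\abs{A,B}_X$, but this is just two passes of the ``a vertex split unevenly adds little to the order'' principle already behind \cref{lem:set_fixed_shift_order}, so I expect no genuine obstacle; note also that the hypothesis order $16k$ is far larger than the argument needs (any bound exceeding $2k$ would do) and is presumably chosen so that $\tau'$ and $\tau''$ are themselves tangles via \cref{thm:shifttangle_set}.
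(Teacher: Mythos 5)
Your proof is correct, and it takes a genuinely different route from the one presented in the paper. The paper proves \cref{thm:double_shift_set} only indirectly, by detouring through tangles of the edge separations $\vS(E)$ (via \cref{prop:edges_to_vtx}, \cref{prop:vtx_to_edges} and \cref{cor:double_shift}), and that detour only recovers the statement with the weaker constant $64k$ in place of $16k$ (\cref{thm:double_shift_weaker}); the direct proof with the stated constants is deferred to the extended arXiv version. Your argument is a self-contained direct proof: you unwind the two pull-backs to get $(A',B')=((A,B)^\triangleright)^\triangleleft\in\tau$, observe the covering identity $B\cup A'\cup(A\cap B')=X$, and close the triangle against property \eqref{prop:Xtangle} by showing $\tau$ must contain the small separation $(A\cap B',X)$. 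The one substantive step, the bound $\abs{A\cap B',X}_X\le 2\abs{A,B}_X$, checks out: the chain $\abs{E(Y,A\cap B')}\le 2\abs{E(A\cap B',B_A^\triangleright)}\le 2\abs{E(A,B_A^\triangleright)}\le 2\sum_{y\in Y}\min\{\abs{N(y)\cap A},\abs{N(y)\cap B}\}\le 4\abs{A,B}_X$ is valid, using $A_B^\triangleright\cup B_A^\triangleright=Y$ for the first inequality and $\abs{N(y)\cap A\cap B}\le\min\{\abs{N(y)\cap A},\abs{N(y)\cap B}\}$ for the last. Your closing observation is also accurate: the containment $\tau''\subseteq\tau$ only needs $\tau$ to orient separations of order below $2k$, so your argument in fact proves a slightly stronger statement than the one asserted; the factor $16k$ in the hypothesis is there so that $\tau'$ and $\tau''$ are themselves tangles via \cref{thm:shifttangle_set}. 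What your approach buys is the sharp constant and brevity; what the paper's edge-tangle approach buys is the intermediate theory of tangles of $\vS(E)$, which it regards as illuminating in its own right.
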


It is possible to prove Theorems \ref{thm:shifttangle_set} and \ref{thm:double_shift_set} directly. However, there is perhaps a more interesting way to prove them indirectly, which potentially gives slightly more illumination to the connection between these two types of tangles, albeit at the cost of a slight increase in the factors of $k$. The idea is to view the tangles of the two partition classes as two different facets of tangles on the edge set of the bipartite graph.

 We give these proofs in the next section. Direct proofs of Theorems \ref{thm:shifttangle_set} and \ref{thm:double_shift_set} are given in the extended version of this paper~\cite{ASSdualityArxiv}.

\subsection{Tangles of the edges}\label{subsec:tangles_edges}
We will show that the tangles on the sides of a bipartite graph are closely related to a third type of tangle - one defined on the separations of the edges. Let us briefly introduce some new notation for this purpose.

We denote the set of all separations of $E$, the edge set of our bipartite graph, as $\vS(E)$, and the set of the corresponding unoriented separations as $S(E)$.
The following order function on the separations in $\vS(E)$ is a natural variation on our previous order function for separations in $\vS(X)$:\[
    \abs{C,D}_E := \sum_{v\in V} \left(\min(\abs{E(v) \cap C}, \abs{E(v) \cap D})-\abs{E(v)\cap C\cap D}/2\right),
\]
where $E(v)$ denotes the set of incident edges of $v$.
We will again assume that any $\vS_k(E)$ we consider does not contain $(E,E)$.

We say that an orientation $\tau$ of a subset $\vS_k(E)$ of $\vS(E)$ is a \emph{tangle} of $\vS_k(E)$, if $\tau$ is an orientation of $\vS_k(E)$ with the following property:
\[
\tag{$\dagger_E$} \textit{There are no } (C_1,D_1),(C_2,D_2),(C_3,D_3)\in \tau \textit{ with } C_1\cup C_2\cup C_3=E. \label{prop:Etangle}
\]

\noindent
Given a separation in $\vS(X)$, there is a reasonably natural candidate for a separation in $\vS(E)$ which it `induces': A separation $(A,B)$ of $X$ naturally defines a separation $(A,B)^E := (E(A), E(B))$ of $E$, where $E(A)$ denotes the set of all edges of $G$ which have an end vertex in $A$. Note that $((A,B)^E)^\ast=(B,A)^E$.
 
 The other way around is less obvious, but it will be necessary to associate to each separation in $\vS(E)$ a separation in $\vS(X)$ and $\vS(Y)$. We will do so similarly to how we associated to each separation in $\vS(Y)$ a separation in $\vS(X)$. There we obtained, given a separation $(A,B)\in \vS(Y)$, a separation in $\vS(X)$ by asking for every vertex in $X$ whether that vertex has more neighbours in $A$ or in $B$. In a similar manner we will now ask, given a separation $(C,D)$ in $\vS(E)$, for each vertex in $X$ whether more of the adjacent edges lie in $C$ or in $D$.  Formally, given a separation $(C,D)$ of $E$, we obtain a separation $(C,D)^\blacktriangleleft := (C^\blacktriangleleft_D,D^\blacktriangleleft_C)$ of $X$ by defining \[
    C^\blacktriangleleft_D = \{x\in X  \::\: \abs{E(x) \cap C} \ge \abs{E(x) \cap D}\}
\]
and 
\[
D^\blacktriangleleft_C = \{x\in X  \::\: \abs{E(x) \cap C} \le \abs{E(x) \cap D}\}.
\]
In an analogous manner we can define a separation $(C,D)^\blacktriangleright$ of $Y\!$, however by the symmetry of the situation we will only ever need to talk about the map~$(\cdot)^\blacktriangleleft$.

Now this shifting operation on the separations preserves the partial order of separations in the following sense:
\begin{LEM}\label{lem:shift_hom}
If $(C,D)\le (C',D')$, then $(C,D)^\blacktriangleleft\le (C',D')^\blacktriangleleft$
\end{LEM}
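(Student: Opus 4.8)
The plan is to unravel what $(C,D)\le(C',D')$ means for separations of the edge set $E$, and then show that this inequality is preserved under the map $(\cdot)^\blacktriangleleft$. Recall that for oriented separations of a set, $(C,D)\le(C',D')$ means $C\subseteq C'$ and $D\supseteq D'$. So I would start by fixing a vertex $x\in X$ and comparing the quantities $\abs{E(x)\cap C}$ versus $\abs{E(x)\cap D}$ on the one hand, and $\abs{E(x)\cap C'}$ versus $\abs{E(x)\cap D'}$ on the other. Since $C\subseteq C'$ we have $\abs{E(x)\cap C}\le\abs{E(x)\cap C'}$, and since $D\supseteq D'$ we have $\abs{E(x)\cap D}\ge\abs{E(x)\cap D'}$.

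The key observation is then purely arithmetic: if $x\in C^\blacktriangleleft_D$, i.e.\ $\abs{E(x)\cap C}\ge\abs{E(x)\cap D}$, then using the two inequalities above we get
\[
  \abs{E(x)\cap C'}\ \ge\ \abs{E(x)\cap C}\ \ge\ \abs{E(x)\cap D}\ \ge\ \abs{E(x)\cap D'},
\]
so $x\in C'^\blacktriangleleft_{D'}$. This shows $C^\blacktriangleleft_D\subseteq C'^\blacktriangleleft_{D'}$. Symmetrically, if $x\in D'^\blacktriangleleft_{C'}$, i.e.\ $\abs{E(x)\cap C'}\le\abs{E(x)\cap D'}$, then
\[
  \abs{E(x)\cap C}\ \le\ \abs{E(x)\cap C'}\ \le\ \abs{E(x)\cap D'}\ \le\ \abs{E(x)\cap D},
\]
so $x\in D^\blacktriangleleft_C$; hence $D^\blacktriangleleft_C\supseteq D'^\blacktriangleleft_{C'}$. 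Together these two inclusions are exactly the statement that $(C,D)^\blacktriangleleft=(C^\blacktriangleleft_D,D^\blacktriangleleft_C)\le(C'^\blacktriangleleft_{D'},D'^\blacktriangleleft_{C'})=(C',D')^\blacktriangleleft$.

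There is really no serious obstacle here; the only thing to be a little careful about is the bookkeeping between the two coordinates of a separation and the direction of the inequalities (the second coordinate is ordered by reverse inclusion), and to remember that one should not attempt to say anything about whether the shifted separations are themselves in $\vS(X)$ beyond being oriented separations of $X$ — the lemma is a statement purely about the partial order. I would write the whole argument in the two displays above plus a sentence of setup, keeping it as short as the analogous order-preservation facts stated earlier for $(\cdot)^\triangleright$ and $(\cdot)^\triangleleft$.
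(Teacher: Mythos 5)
Your argument is correct and is essentially identical to the paper's own proof: both fix $x\in X$, use $C\subseteq C'$ and $D\supseteq D'$ to get the monotonicity of the edge counts $\abs{E(x)\cap\cdot}$, and then chain the inequalities in each direction to obtain the two inclusions $C^\blacktriangleleft_D\subseteq C'^\blacktriangleleft_{D'}$ and $D'^\blacktriangleleft_{C'}\subseteq D^\blacktriangleleft_C$. Nothing to add.
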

\begin{proof}
If $(C,D)\le (C',D')$, then $C\subseteq C'$ and $D\supseteq D'$. Thus, for $x\in X$, we have that $\abs{E(x)\cap C}\le \abs{E(x)\cap C'}$ and $\abs{E(x)\cap D}\ge \abs{E(x)\cap D'}$.

Now if $x\in C^\blacktriangleleft_D$, then $\abs{E(x) \cap C} \ge \abs{E(x) \cap D}$ and thus \[\abs{E(x)\cap C'}\ge \abs{E(x)\cap C}\ge \abs{E(x)\cap D}\ge \abs{E(x)\cap D'},\] hence  $x\in C'^\blacktriangleleft_{D'}$. Similarly, if $x\in D'^\blacktriangleleft_{C'}$, then $\abs{E(x)\cap D'}\ge \abs{E(x)\cap C'}$ and thus
\[\abs{E(x)\cap D}\ge \abs{E(x)\cap D'}\ge \abs{E(x)\cap C'}\ge \abs{E(x)\cap C},\] hence  $x\in C^\blacktriangleleft_{D}$. Thus $C^\blacktriangleleft_D\subseteq C'^\blacktriangleleft_{D'}$ and $D'^\blacktriangleleft_{C'}\subseteq D^\blacktriangleleft_C$, i.e. $(C,D)^\blacktriangleleft\le (C',D')^\blacktriangleleft$. 
\end{proof}

Unlike the shifting operations considered in the previous section, there is less of a symmetry here: The separation $(A,B)^E$ fully determines the separation $(A,B)$, whereas the separation $(C,D)^{\blacktriangleleft}$ in some way `compresses' the information in the separation $(C,D)$ into a rough estimate. Generally there are multiple different separations $(C,D)$ in $\vS(E)$ for which the $(C,D)^\blacktriangleleft$ coincide, and so the operation $(\cdot)^\blacktriangleleft$ is not injective.

As with $(\cdot)^\triangleright$, this function induces a pullback map:
given a subset $\tau$ of $\vS(X)$, we define \[\tau_E := \{(C,D)   \::\: (C,D)^\blacktriangleleft\in \tau\}.\] 
Note that, as $(E(A),E(B))^\blacktriangleleft=(A,B)$, the set of all the separations $(A,B)^E$ is a subset of $\tau_E$.

For shifting in the other direction we take a slightly different notion. Given a subset $\tau$ of $\vS(E)$, let us define \[\tau_X:=\{(C,D)^\blacktriangleleft \::\: (C,D)\in \tau\},\]
and let $\tau_Y$ be defined analogously.
Note that this is a genuinely different way to move between tangles of $\vS(E)$ and $\vS(X)$; rather than `pulling back' the tangle from $\vS(X)$ to $\vS(E)$ via the shift~$(\cdot)^E$, giving rise to a set of separations 
\[
\pullE \tau := \{(A,B) \in \vS(X)   \::\: (A,B)^E \in \tau\},
\]
we're `pushing forward' via the shift $(\cdot)^{\blacktriangleleft}$.

We note that in this particular case, since, assuming the graph is connected, it is clear that $\left((A,B)^E\right)^\blacktriangleleft = (A,B)$, we have that $\tau_X \supseteq \pullE \tau$ and so, since $\pullE \tau$ is automatically a partial orientation of $\vS(X)$, if the restriction of $\tau_X$ to some lower order is a tangle, then the restriction of $\pullE \tau$ to the same order will also satisfy~\eqref{prop:Xtangle}. In particular, working with this definition results in slightly stronger results than working with $\pullE \tau$, however the main purpose of this change is that it will result in slightly simpler proofs, see for example \cref{cor:double_shift}.

We will show that given a tangle $\tau$ of $\vS_{4k}(X)$, the set $\tau_E \cap \vS_k(E)$ is in fact a tangle of $\vS_k(E)$ and dually, given a tangle $\tau $ is of $\vS_{2k}(E)$, the set $\tau_X \cap \vS_k(X)$ is a tangle of $\vS_k(X)$. We will then be able to use this to obtain proofs of \cref{thm:shifttangle_set} and \cref{thm:double_shift_set} from the symmetry between $X$ and $Y\!$.

We note first that, as is the case for $\abs{\cdot}_X$ and $\abs{\cdot}_Y$, the order function $\abs{\cdot}_E$ is submodular. 

However, since we will not use this fact, and its proof is almost identical to that of Lemma \ref{lem:order_submodular}, the proof is included only in the extended version of the paper.

\begin{LEM}\label{lem:edge_submod}
The order function  $\abs{\cdot}_E$ is submodular.
\end{LEM}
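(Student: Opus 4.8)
The plan is to mirror the proof of \cref{lem:order_submodular} essentially verbatim, exploiting the fact that $\abs{\cdot}_E$ has exactly the same shape as $\abs{\cdot}_X$, with the neighbourhoods $N(y)$ replaced by the incident-edge sets $E(v)$ and the ground set $X$ replaced by $E$. First I would write $\abs{C,D}_E=\sum_{v\in V}\abs{C,D}_v$, where for each $v\in V$ we set
\[
  \abs{C,D}_v:=\min\{\abs{E(v)\cap C},\abs{E(v)\cap D}\}-\abs{E(v)\cap C\cap D}/2 .
\]
Since a sum of submodular functions is submodular, it suffices to show that $\abs{\cdot}_v$ is submodular on the universe $\vS(E)$ for each fixed $v$.

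Next, fix $v\in V$ and, for $Z\subseteq E$, abbreviate $N_Z:=\abs{E(v)\cap Z}$; the map $Z\mapsto N_Z$ is monotone and additive over disjoint unions. Then $\abs{C,D}_v=\min\{N_C,N_D\}-N_{C\cap D}/2$, which is precisely the function called $\abs{\cdot}_y$ in the proof of \cref{lem:order_submodular} (there $N_Z=\abs{N(y)\cap Z}$ for $Z\subseteq X$). Given two separations $(C_1,D_1),(C_2,D_2)\in\vS(E)$, their supremum and infimum in $\vS(E)$ are $(C_1\cup C_2,D_1\cap D_2)$ and $(C_1\cap C_2,D_1\cup D_2)$, matching the situation there; so I would normalise by assuming $N_{C_i}\le N_{D_i}$, set $C_i':=C_i\sm D_i$, $D_i':=D_i\sm C_i$, $Z_i:=C_i\cap D_i$, and run the same two calculations: one bounding $\abs{C_1\cap C_2,D_1\cup D_2}_v+\abs{C_1\cup C_2,D_1\cap D_2}_v$, and one bounding the `crossed' version $\abs{C_1\cap D_2,D_1\cup C_2}_v+\abs{C_1\cup D_2,D_1\cap C_2}_v$, each time by $\abs{C_1,D_1}_v+\abs{C_2,D_2}_v$. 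Both calculations use only $\min\{a,b\}\le a$, $\min\{a,b\}\le b$, and the identity $N_{W_1\cap W_2}+N_{W_1\cup W_2}=N_{W_1}+N_{W_2}$, all of which hold here verbatim. Taking the two normalisations of $\{(C_1,D_1),(C_2,D_2)\}$ together exhausts the instances $\abs{\vr\join\vs}+\abs{\vr\meet\vs}\le\abs{\vr}+\abs{\vs}$ of submodularity for $\abs{\cdot}_v$, and summing over $v\in V$ yields the claim for $\abs{\cdot}_E$.

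I do not expect a genuine obstacle: the entire content of \cref{lem:order_submodular} is set-theoretic in $N(y)$ and uses nothing specific about the classes $X$ and $Y$, so the only work is bookkeeping — renaming $N(y)$ to $E(v)$, $X$ to $E$, and $Y$ to $V$. The one point worth a sentence of care is that submodularity is here being verified in the universe of all separations of $E$, whose meet and join are the ones displayed above, so that the same pair of uncrossing inequalities is exactly what is needed.
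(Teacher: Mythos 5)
Your proposal is correct and is exactly the argument the paper intends: the authors state that the proof of this lemma is ``almost identical to that of Lemma~\ref{lem:order_submodular}'' and defer it to the extended version, and your observation that the per-vertex summand $\min\{\abs{E(v)\cap C},\abs{E(v)\cap D}\}-\abs{E(v)\cap C\cap D}/2$ has the same form as $\abs{\cdot}_y$, with the uncrossing computations depending only on the additivity of $Z\mapsto\abs{E(v)\cap Z}$, is precisely the required bookkeeping.
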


However, unlike for the correspondence between $\abs{\cdot}_X$ and $\abs{\cdot}_Y$, we will no longer be able to show that the order of the shift of a separation is non-increasing, instead we will only be able to show that, when shifting from a separation of the vertices to the corresponding separation of the edges, we can bound how much the order increases. More precisely, simple calculations show that:
\begin{PROP}\label{lem:order_shift} Given a separation $(A,B)$ of $X$, we have
   $\abs{A,B}_X \le\abs{(A,B)^E}_E$ and  $\abs{(A,B)^E}_E\le 2\abs{A,B}_X$.
\end{PROP}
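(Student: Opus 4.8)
The plan is to compute $\abs{(A,B)^E}_E$ explicitly and compare it, more or less term by term, with $\abs{A,B}_X$. Recall that $(A,B)^E=(E(A),E(B))$, and that since $G$ is bipartite with classes $X$ and $Y\!$, every edge $e=xy$ with $x\in X$ lies in $E(A)$ precisely when $x\in A$ (and in $E(B)$ precisely when $x\in B$). I would use this to split the defining sum $\abs{E(A),E(B)}_E=\sum_{v\in V}(\cdots)$ over $V=X\cup Y$ into its $Y$-part and its $X$-part.

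For $v=y\in Y\!$, the edges incident with $y$ are exactly the $xy$ with $x\in N(y)$, so $\abs{E(y)\cap E(A)}=\abs{N(y)\cap A}$, $\abs{E(y)\cap E(B)}=\abs{N(y)\cap B}$ and $\abs{E(y)\cap E(A)\cap E(B)}=\abs{N(y)\cap A\cap B}$; hence the $y$-summand of $\abs{(A,B)^E}_E$ is exactly the $y$-summand of $\abs{A,B}_X$. For $v=x\in X$, either $x\in A\cap B$, in which case $E(x)\subseteq E(A)\cap E(B)$ and the $x$-summand equals $\abs{N(x)}-\abs{N(x)}/2=\abs{N(x)}/2$; or $x$ lies in exactly one of $A,B$ (it must lie in $A\cup B=X$), in which case $E(x)$ meets only one of $E(A),E(B)$ and the $x$-summand is $0$. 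Since the edge sets $E(x)$ for $x\in A\cap B$ are disjoint and sum to $E(Y,A\cap B)$, summing over $V$ yields the identity
\[
\abs{(A,B)^E}_E = \abs{A,B}_X + \tfrac12\,\abs{E(Y,A\cap B)}.
\]

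Both claimed inequalities then follow from this identity. The lower bound $\abs{A,B}_X\le\abs{(A,B)^E}_E$ is immediate, as $\abs{E(Y,A\cap B)}\ge 0$. For the upper bound it suffices to show $\abs{E(Y,A\cap B)}\le 2\abs{A,B}_X$, which I would obtain term by term: $\abs{E(Y,A\cap B)}=\sum_{y\in Y}\abs{N(y)\cap A\cap B}$, while each summand of $2\abs{A,B}_X$ equals $2\min\{\abs{N(y)\cap A},\abs{N(y)\cap B}\}-\abs{N(y)\cap A\cap B}\ge 2\abs{N(y)\cap A\cap B}-\abs{N(y)\cap A\cap B}=\abs{N(y)\cap A\cap B}$, using $\abs{N(y)\cap A\cap B}\le\min\{\abs{N(y)\cap A},\abs{N(y)\cap B}\}$.

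There is no genuine obstacle here; the only point requiring care is the bookkeeping in the $X$-part of the edge sum, where the bipartiteness of $G$ forces each $x\in X$ to contribute either $\abs{N(x)}/2$ or $0$ according to whether or not $x\in A\cap B$. Everything else is a routine rearrangement of sums, in the spirit of the proof of \cref{lem:set_shift_order_fn}.
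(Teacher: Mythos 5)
Your proof is correct and takes essentially the same route as the paper: it splits the edge-sum over $V=X\cup Y$, identifies the $Y$-part with $\abs{A,B}_X$, and identifies the $X$-part with $\tfrac12\abs{E(Y,A\cap B)}$, which is then bounded by $\abs{A,B}_X$. Your version is marginally tidier in deriving both inequalities from the single identity, and you spell out the final estimate $\abs{E(Y,A\cap B)}\le 2\abs{A,B}_X$ that the paper dismisses as clear, but there is no substantive difference.
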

\begin{proof} For the first statement we note that:
\begin{align*}
\abs{A,B}_X={}&\sum_{y\in Y}\left(\min\{\abs{N(y)\cap A},\abs{N(y)\cap B}\} - \abs{N(y) \cap A \cap B} / 2\right)\\
={}&\sum_{y\in Y}\left(\min\{\abs{E(y)\cap E(A)},\abs{E(y)\cap E(B)}\} - \abs{E(y) \cap E(A) \cap E(B)} / 2\right)\\
\le{}& \sum_{v\in V}\left(\min\{\abs{E(v)\cap E(A)},\abs{E(v)\cap E(B)}\} - \abs{E(v) \cap E(A) \cap E(B)} / 2\right) \\
={}&\abs{E(A),E(B)}_E
\end{align*}
For the second statement we observe that, for $x\in X$ we have, since $x\in A$ or $x\in B$, that \[
\min\{\abs{E(x)\cap E(A)},\abs{E(x)\cap E(B)}\}=\abs{E(x)\cap E(A)\cap E(B)}
\] and thus \[
\sum_{x\in X}\left(\min\{\abs{E(x)\cap E(A)},\abs{E(x)\cap E(B)}\} - \abs{E(x) \cap E(A) \cap E(B)} / 2\right)=\abs{E(A)\cap E(B)}/2.
\]
As clearly $\abs{A,B}_X\ge \abs{E(A)\cap E(B)}/2$ it follows that
\begin{align*}
&\abs{E(A),E(B)}_E\\={}&\sum_{v\in V}\left(\min\{\abs{E(v)\cap E(A)},\abs{E(v)\cap E(B)}\} - \abs{E(v) \cap E(A) \cap E(B)} / 2\right)\\
={}&\sum_{x\in X}\left(\min\{\abs{E(x)\cap E(A)},\abs{E(x)\cap E(B)}\} - \abs{E(x) \cap E(A) \cap E(B)} / 2\right)\\
&+\sum_{y\in Y}\left(\min\{\abs{E(y)\cap E(A)},\abs{E(y)\cap E(B)}\} - \abs{E(y) \cap E(A) \cap E(B)} / 2\right)\\
={}&\abs{E(A)\cap E(B)}/2+\sum_{y\in Y}\left(\min\{\abs{N(y)\cap A},\abs{N(y)\cap B}\} - \abs{N(y) \cap A \cap B} / 2\right)\\
\le{}& 2\abs{A,B}_X \qedhere
\end{align*}
\end{proof}
For $(\cdot)^\blacktriangleleft$ on the other hand, we will be able to show that this is a non-increasing operation:
\begin{LEM} \label{lem:set_edge_decrease}
Let $(C,D)$ be a separation of $E$, then $\abs{C,D}_E \geq \abs{C^\blacktriangleleft_D,D_C^{\blacktriangleleft}}_X$.
\end{LEM}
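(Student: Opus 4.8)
The plan is to mimic the proof of \cref{lem:set_fixed_shift_order}, replacing the vertex-neighbourhood counts $\abs{N(x)\cap A}$ by edge-incidence counts $\abs{E(x)\cap C}$, and tracking the analogue of \cref{lem:set_shift_order_fn} for edge separations. First I would write out the definition:
\[
\abs{C^\blacktriangleleft_D,D^\blacktriangleleft_C}_X=\sum_{x\in X}\Big(\min\{\abs{N(x)\cap C^\blacktriangleleft_D},\abs{N(x)\cap D^\blacktriangleleft_C}\}-\abs{N(x)\cap C^\blacktriangleleft_D\cap D^\blacktriangleleft_C}/2\Big).
\]
Wait — there is a subtlety: the order function $\abs{\cdot}_X$ is a sum over $Y\!$ of terms involving $N(y)\cap\{\text{sides}\}$, so here the sides $C^\blacktriangleleft_D,D^\blacktriangleleft_C$ are subsets of $X$, and the sum runs over $y\in Y\!$ with $N(y)\subseteq X$. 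So the expression is
\[
\abs{C^\blacktriangleleft_D,D^\blacktriangleleft_C}_X=\sum_{y\in Y}\Big(\min\{\abs{N(y)\cap C^\blacktriangleleft_D},\abs{N(y)\cap D^\blacktriangleleft_C}\}-\abs{N(y)\cap C^\blacktriangleleft_D\cap D^\blacktriangleleft_C}/2\Big).
\]
I would bound $\min\{\abs{N(y)\cap C^\blacktriangleleft_D},\abs{N(y)\cap D^\blacktriangleleft_C}\}$ from above by splitting each neighbour $x\in N(y)$ according to which side of $(C,D)^\blacktriangleleft$ it lies on: a neighbour on the $C^\blacktriangleleft_D$-side contributes to the $D^\blacktriangleleft_C$-count only if it also lies in $D^\blacktriangleleft_C$, i.e. on the `tie' set, and symmetrically. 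This gives
\[
\min\{\abs{N(y)\cap C^\blacktriangleleft_D},\abs{N(y)\cap D^\blacktriangleleft_C}\}\le \sum_{\substack{x\in N(y)\\ x\in C^\blacktriangleleft_D}}\!\!1_{x\in D^\blacktriangleleft_C}\cdot(\text{something})+\dots
\]
but the cleaner route is to follow the model calculation: bound the whole sum by $\sum_{x\in C^\blacktriangleleft_D}\abs{N(x)\cap D}+\sum_{x\in D^\blacktriangleleft_C}\abs{N(x)\cap C}$ minus correction terms — except now $N(x)\cap D$ must be read as edges, not vertices. Concretely I expect the chain
\begin{align*}
\abs{C^\blacktriangleleft_D,D^\blacktriangleleft_C}_X
&\le \sum_{x\in C^\blacktriangleleft_D}\abs{E(x)\cap D}+\sum_{x\in D^\blacktriangleleft_C}\abs{E(x)\cap C}-\sum_{x\in C^\blacktriangleleft_D\cap D^\blacktriangleleft_C}\abs{E(x)}/2-\sum_{y\in Y}\abs{E(y)\cap C\cap D}/2\\
&=\abs{E(C^\blacktriangleleft_D,D)}+\abs{E(D^\blacktriangleleft_C,C)}-\abs{E(C^\blacktriangleleft_D\cap D^\blacktriangleleft_C,X)}/2-\abs{E(Y,C\cap D)}/2,
\end{align*}
where the first inequality uses that for $x\in C^\blacktriangleleft_D$ we have $\abs{E(x)\cap D}\ge\abs{E(x)\cap C}\ge\min\{\dots\}$ at the level of each vertex's contribution, and the second line rewrites the double sums over $(x,y)$-incidences as edge sets (using that $E$ is a set of $xy$-edges of the bipartite graph). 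Then I would recognise the right-hand side as exactly the analogue, for the edge separation $(C,D)$, of the formula in \cref{lem:set_shift_order_fn}: the sets $C^\blacktriangleleft_D$ and $D^\blacktriangleleft_C$ play the role of $A^\triangleright_B$ and $B^\triangleright_A$, so by the identical rearrangement-of-sums argument this right-hand side equals $\abs{C,D}_E$. Hence $\abs{C,D}_E\ge\abs{C^\blacktriangleleft_D,D^\blacktriangleleft_C}_X$.

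The main obstacle I anticipate is the correct bookkeeping of the $C\cap D$ correction term: an edge $e\in C\cap D$ incident to $x\in X$ and $y\in Y\!$ gets counted in several of the sums, and I must make sure it is subtracted with total weight exactly matching the $-\abs{E(v)\cap C\cap D}/2$ term of $\abs{\cdot}_E$ summed over all $v\in V=X\cup Y\!$. Getting the factor of $\tfrac12$ exactly right — distinguishing contributions at the $X$-end versus the $Y$-end of each edge — is the only place where a sign or coefficient error could creep in. Once that is pinned down, the rest is the same bijective rewriting of incidence-pairs as edges that already appears in \cref{lem:set_shift_order_fn} and \cref{lem:set_fixed_shift_order}, so I would present the proof as a single displayed \verb|align*| computation with a one-line justification of the inequality step, exactly parallel to the proof of \cref{lem:set_fixed_shift_order}.
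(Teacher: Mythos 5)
There is a genuine gap here, and it is not the bookkeeping of the $C\cap D$ term that you flag at the end. The analogy with \cref{lem:set_shift_order_fn} breaks down because an arbitrary edge separation $(C,D)$ is not determined by its shift: knowing that $x\in C^\blacktriangleleft_D$ says nothing about which side of $(C,D)$ any \emph{individual} edge at $x$ lies on, so neighbourhood counts like $\abs{N(y)\cap C^\blacktriangleleft_D}$ cannot be converted into edge counts $\abs{E(x)\cap D}$ the way $\abs{N(y)\cap B}$ is converted into edges in \cref{lem:set_shift_order_fn}. Concretely, both steps of your displayed chain fail. Let $Y=\{y\}$, let $X=\{x_1,\dots,x_{2n}\}$, let $G$ be the star centred at $y$, and set $C=\{yx_1,\dots,yx_n\}$ and $D=\{yx_{n+1},\dots,yx_{2n}\}$. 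Then $C^\blacktriangleleft_D=\{x_1,\dots,x_n\}$, $D^\blacktriangleleft_C=\{x_{n+1},\dots,x_{2n}\}$, and $\abs{C^\blacktriangleleft_D,D^\blacktriangleleft_C}_X=n=\abs{C,D}_E$, whereas your right-hand side
\[
\sum_{x\in C^\blacktriangleleft_D}\abs{E(x)\cap D}+\sum_{x\in D^\blacktriangleleft_C}\abs{E(x)\cap C}-\sum_{x\in C^\blacktriangleleft_D\cap D^\blacktriangleleft_C}\abs{E(x)}/2-\sum_{y\in Y}\abs{E(y)\cap C\cap D}/2
\]
equals $0$. So the inequality $\abs{C^\blacktriangleleft_D,D^\blacktriangleleft_C}_X\le(\mathrm{RHS})$ is false, and so is the identity $(\mathrm{RHS})=\abs{C,D}_E$: your expression reproduces only the $X$-half of the sum $\sum_{v\in V}$ defining $\abs{\cdot}_E$ and discards the $Y$-half $\sum_{y\in Y}\min\{\abs{E(y)\cap C},\abs{E(y)\cap D}\}$, which is exactly where the order of $(C,D)$ lives in this example.

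The missing idea is a normalisation step within the fibre of the shift map. The paper first shows (\cref{lem:move_over}, then \cref{lem:minimal_induced}) that among all edge separations with the same shift as $(C,D)$ there is one, $(C',D')$, of no larger order in which every edge incident with $C^\blacktriangleleft_D\sm D^\blacktriangleleft_C$ lies in $C'\sm D'$ and every edge incident with $D^\blacktriangleleft_C\sm C^\blacktriangleleft_D$ lies in $D'\sm C'$. Since $\abs{C',D'}_E\le\abs{C,D}_E$ and the shift is unchanged, it suffices to prove the lemma for such normalised separations; for these, $\abs{E(y)\cap C'}\ge\abs{N(y)\cap(C^\blacktriangleleft_D\sm D^\blacktriangleleft_C)}$ and symmetrically, so the $Y$-part of $\abs{C',D'}_E$ can be bounded below by $\abs{C^\blacktriangleleft_D,D^\blacktriangleleft_C}_X$ termwise, with the $X$-part supplying the needed $\tfrac12\abs{E(v)}$ corrections at tie vertices. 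Note also that the correct direction of the argument is to bound the $Y$-sum defining $\abs{(C,D)^\blacktriangleleft}_X$ against the $Y$-sum inside $\abs{C,D}_E$, not against the $X$-sum as your chain attempts; without the normalisation no rearrangement of sums can close the gap, as the star example shows.
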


For the proof of \cref{lem:set_edge_decrease} we will need to carefully analyse how we can `locally' change a separation in $\vS(E)$ without changing the shift.
Recall that, given a separation $(A,B)$ in $\vS(X)$, there are other separations apart from $(A,B)^E$ in $\vS(E)$ which still shift to $(A,B)$. So, in order to prove \cref{lem:set_edge_decrease} we will analyse what these different separations of $E$ inducing the same separation $(A,B)$ of $X$ look like.
For this, we will show which 'local', i.e.\ single-edge, changes we can make to a given separation $(C,D)$ to bring it closer to one of the type $(A,B)^E$, without increasing its order.

So, let us start analysing these `local' changes.
Firstly, in the next lemma we show that we can move a single edge from $C$ to $D$ without increasing the order of $(C,D)$ or changing its shift $(C,D)^\blacktriangleleft$, if at the end vertex in $X$ of that edge there are fewer incident edges in $C$ than in $D$.

\begin{LEM}\label{lem:move_over}
    Let $(C,D)$ be a separation of $E$ and let $e\in E$ be incident with $C^\blacktriangleleft_D\sm D^\blacktriangleleft_C$. Then $\abs{C\cup\{e\},D\sm\{e\}}_E\le \abs{C,D}_E$ and ${(C\cup \{e\},D\sm\{e\})^\blacktriangleleft=(C,D)^\blacktriangleleft}$.
\end{LEM}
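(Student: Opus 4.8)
The statement to prove is \cref{lem:move_over}: moving a single edge $e$ from $C$ to $D$ does not increase the order $\abs{C,D}_E$ and does not change the shift $(C,D)^\blacktriangleleft$, provided the $X$-endpoint $x$ of $e$ lies in $C^\blacktriangleleft_D\sm D^\blacktriangleleft_C$, i.e.\ satisfies $\abs{E(x)\cap C}>\abs{E(x)\cap D}$. Write $x$ and $y$ for the two endpoints of $e$ (with $x\in X$, $y\in Y$). The first thing I would do is fix the notation $C' := C\cup\{e\}$, $D' := D\sm\{e\}$ and observe that for every vertex $v\notin\{x,y\}$ the quantities $\abs{E(v)\cap C}$, $\abs{E(v)\cap D}$, and $\abs{E(v)\cap C\cap D}$ are all unchanged when passing from $(C,D)$ to $(C',D')$, so the summand for $v$ in the defining sum of $\abs{\cdot}_E$ is identical on both sides. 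Hence both the order inequality and the shift-invariance claim reduce to a purely local analysis at the two vertices $x$ and $y$.

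\textbf{The shift is unchanged.} For shift-invariance I only need to check that no vertex in $X$ changes which side it is assigned to. For $v\in X$ with $v\neq x$ this is immediate from the previous paragraph, since $\abs{E(v)\cap C}$ and $\abs{E(v)\cap D}$ are unchanged. For $x$ itself: by hypothesis $\abs{E(x)\cap C}\ge\abs{E(x)\cap D}+1$. After the move, $\abs{E(x)\cap C'}=\abs{E(x)\cap C}-1$ — wait, this is backwards; $e$ moves \emph{into} $C$, so $\abs{E(x)\cap C'}=\abs{E(x)\cap C}+1$ if $e\notin C$, and $\abs{E(x)\cap D'}=\abs{E(x)\cap D}-1$ if $e\in D$. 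In every case the gap $\abs{E(x)\cap C'}-\abs{E(x)\cap D'}$ is at least as large as before, so $x$ remains in $C'^{\blacktriangleleft}_{D'}$; and the strictness of the original inequality is not needed here but is needed to guarantee $x$ was already on the $C$-side, so that nothing flips. Thus $(C',D')^\blacktriangleleft=(C,D)^\blacktriangleleft$.

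\textbf{The order does not increase.} This is the main obstacle, and it comes down to comparing, for each of $v=x$ and $v=y$, the old summand $\min(\abs{E(v)\cap C},\abs{E(v)\cap D})-\abs{E(v)\cap C\cap D}/2$ with the new one. For $v=x$: using $\abs{E(x)\cap C}>\abs{E(x)\cap D}$, the minimum on the left is $\abs{E(x)\cap D}$; after the move the relevant quantities only make the minimum smaller or equal and the subtracted term only larger or equal (one needs to split into the cases $e\in C\cap D$, $e\in C\sm D$, $e\in D\sm C$, though note $e\notin C$ is needed for the move to be nontrivial), so the $x$-summand does not increase. The delicate case is $v=y\in Y$, where we have no control over whether $\abs{E(y)\cap C}$ or $\abs{E(y)\cap D}$ is the smaller; here one checks directly that decreasing $\abs{E(y)\cap D}$ by (at most) one and increasing $\abs{E(y)\cap C}$ by (at most) one, while possibly moving $e$ in or out of the intersection, changes $\min(\abs{E(y)\cap C},\abs{E(y)\cap D})-\abs{E(y)\cap C\cap D}/2$ by at most $0$ — the worst case being $e\in D\sm C$ and $\abs{E(y)\cap D}\le\abs{E(y)\cap C}$, where the minimum drops by $1$ while the intersection term is unaffected, a net change of $-1\le0$; and the case $e\in C\cap D$, where the minimum is unchanged but the intersection term gains $1/2$, again a decrease. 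Adding the non-increase at $x$ and at $y$ to the equality at all other vertices gives $\abs{C',D'}_E\le\abs{C,D}_E$. I would organize the $y$-computation as a short case distinction on the membership of $e$ relative to $C$ and $D$ together with the sign of $\abs{E(y)\cap C}-\abs{E(y)\cap D}$, since that is where all the content sits; everything else is bookkeeping.
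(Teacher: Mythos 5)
Your reduction to a local analysis at the two endpoints of $e$, and your argument that the shift is unchanged, are both fine and agree with the paper. The genuine gap is in the order computation at the $Y$-endpoint $y$. You claim that the $y$-summand $\min(\abs{E(y)\cap C},\abs{E(y)\cap D})-\abs{E(y)\cap C\cap D}/2$ changes by at most $0$, but this is false: if $e\in D\sm C$ and $\abs{E(y)\cap C}<\abs{E(y)\cap D}$ --- a case your analysis omits; you only treat $\abs{E(y)\cap D}\le\abs{E(y)\cap C}$ and call it the ``worst case'', when it is in fact the harmless one --- then the minimum is attained on the $C$-side and increases by $1$ while the intersection term is unchanged, so the $y$-summand \emph{increases} by $1$. (For a concrete instance take $\abs{E(y)\cap C}=2$, $\abs{E(y)\cap D}=5$, $C\cap D\cap E(y)=\es$: the summand goes from $2$ to $3$.) Similarly, when $e\in C\cap D$ and the minimum at $y$ sits on the $C$-side, the summand increases by $1/2$, because the intersection $C\cap D$ loses $e$. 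So the two local summands are \emph{not} separately non-increasing, and your final step of ``adding the non-increase at $x$ and at $y$'' does not go through. The correct argument, which is the one the paper gives, is a trade-off: the hypothesis $\abs{E(x)\cap C}>\abs{E(x)\cap D}$ forces the $x$-summand to \emph{decrease} by at least $1$ when $e\in D\sm C$ (respectively by at least $1/2$ when $e\in C\cap D$), and this decrease dominates the possible increase at $y$, which is at most $1$ (respectively at most $1/2$). Only the sum of the two local changes is $\le 0$.

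A secondary slip: you dismiss $e\in C\cap D$ on the grounds that ``$e\notin C$ is needed for the move to be nontrivial'', but in that case $D\sm\{e\}\ne D$, so the separation genuinely changes and the $\pm 1/2$ bookkeeping above is needed; relatedly, your assertion that at $x$ ``the subtracted term only gets larger or equal'' is wrong there, since the intersection shrinks.
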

\begin{proof}
We may suppose that $e\in D$, as otherwise there is nothing to show. Let $e=vw$. We observe that, since $v\in C_D^\blacktriangleleft\sm D_C^\blacktriangleleft$, we have
\begin{align*}&\min(\abs{E(v) \cap C}, \abs{E(v) \cap D})-\abs{E(v)\cap C\cap D}/2\\=&\abs{E(v)\cap D}-\abs{E(v)\cap C\cap D}/2\\\ge&\abs{E(v)\cap (D\sm\{e\})}-\abs{E(v)\cap (C\cup\{e\})\cap (D\sm\{e\})}/2+1, 
\end{align*}
and
\begin{align*}
&\min(\abs{E(w) \cap C}, \abs{E(w) \cap D})-\abs{E(w)\cap C\cap D}/2\\\ge&\min(\abs{E(w)\cap (D\sm\{e\})},\abs{E(w)\cap (C\cup\{e\})})-\abs{E(w)\cap (C\cup\{e\})\cap (D\sm\{e\})}/2-1. \end{align*}
Thus $\abs{C\cup\{e\},D\sm\{e\}}_E\le \abs{C,D}_E$. Moreover, $v\in C^\blacktriangleleft_D\sm D_C^\blacktriangleleft$ and therefore also $v\in (C\cup\{e\})_{D\sm\{e\}}^\blacktriangleleft \sm  (D\sm\{e\})_{C\cup\{e\}}^\blacktriangleleft$ and thus $(C,D)^\blacktriangleleft=(C\cup\{e\},D\sm\{e\})^\blacktriangleleft$.
\end{proof}

Using this, we can show that from within the set of those separations of $E$ with the same shift $(A,B)$, we can always find some $(C,D)$ of minimal order which is `close' to $(A,B)^E$, in the sense that every edge incident with $A\sm B$ is contained in $C\sm D$ and every edge incident with $B\sm A$ is contained in $D\sm C$:

\begin{LEM}\label{lem:minimal_induced}
Let $(C,D)\in \vS(E)$. Then there exists a separation $(C',D')$ of $E$ with $\abs{C',D'}_E \leq \abs{C,D}_E$ and $(C',D')^{\blacktriangleleft}=(C,D)^{\blacktriangleleft}$ such that every edge $e$ incident with $C_D^\blacktriangleleft\sm D_C^\blacktriangleleft$ lies in $C'\sm D'$ and every edge incident with $D_C^\blacktriangleleft\sm C_D^\blacktriangleleft$ lies in $D'\sm C'$.
\end{LEM}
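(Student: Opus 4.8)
\textbf{Proof proposal for \cref{lem:minimal_induced}.}

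The plan is to apply \cref{lem:move_over} repeatedly, one edge at a time, while monitoring a suitable potential function to ensure the process terminates. Fix the target separation $(A,B) := (C,D)^\blacktriangleleft$, so $A = C_D^\blacktriangleleft$ and $B = D_C^\blacktriangleleft$; note these depend only on $(C,D)^\blacktriangleleft$, not on the particular representative we currently hold, and that $A\sm B$ consists exactly of those $x\in X$ with $\abs{E(x)\cap C} > \abs{E(x)\cap D}$, while $B\sm A$ consists of those $x$ with $\abs{E(x)\cap D} > \abs{E(x)\cap C}$. We want a representative $(C',D')$ with the same shift, no larger order, and with every edge incident with a vertex of $A\sm B$ pushed into $C'\sm D'$ and every edge incident with a vertex of $B\sm A$ pushed into $D'\sm C'$.

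First I would observe that \cref{lem:move_over} does exactly one such push: if $e$ is incident with a vertex $v\in C_D^\blacktriangleleft\sm D_C^\blacktriangleleft = A\sm B$ and $e\in D$, we may replace $(C,D)$ by $(C\cup\{e\}, D\sm\{e\})$ without increasing the order and without changing the shift. By the symmetry $(C,D)\mapsto(D,C)$ (which swaps the roles of $A$ and $B$ and is compatible with the order function and the shift, since $\abs{C,D}_E = \abs{D,C}_E$ and $(D,C)^\blacktriangleleft = (B,A)$), the mirror statement also holds: if $e$ is incident with a vertex of $B\sm A$ and $e\in C$, we may move $e$ into $D$. So I would define a process: as long as there is an edge $e$ incident with $A\sm B$ that lies in $D$, or an edge incident with $B\sm A$ that lies in $C$, pick such an $e$ and perform the corresponding move. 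Crucially, since the shift is preserved at every step, the sets $A\sm B$ and $B\sm A$ — and hence the set of ``offending'' edges we are targeting — are computed against the same $A, B$ throughout.

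To see termination, I would use the potential function counting ``misplaced'' edges: the number of edges incident with $A\sm B$ that lie in $D$, plus the number of edges incident with $B\sm A$ that lie in $C$. Each move of the first kind takes an edge incident with $v\in A\sm B$ out of $D$ and into $C$; this decreases the first count by one. The subtlety is that the same edge $e = vw$ might also be incident with a vertex $w\in B\sm A$ on the other end, in which case moving $e$ into $C$ increases the second count. Here I would note that $e$ cannot be incident with a vertex of $A\sm B$ at one end and a vertex of $B\sm A$ at the other, because $G$ is bipartite and both endpoints of a push-relevant edge we consider lie — wait, this needs care: $e = vw$ has one end in $X$ and one end in $Y$, and $A\sm B, B\sm A$ are both subsets of $X$. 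So at most one endpoint of $e$ lies in $X$ at all, and therefore $e$ is incident with at most one of $A\sm B$ and $B\sm A$. Consequently each move strictly decreases the potential, which is a non-negative integer, so the process halts.

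When it halts, no edge incident with $A\sm B$ lies in $D$, i.e.\ every such edge lies in $C$; and since an edge incident with $A\sm B$ that lies in $C$ but also in $D$ would still be ``in $D$'', halting means every edge incident with $A\sm B$ lies in $C\sm D$. Symmetrically every edge incident with $B\sm A$ lies in $D\sm C$. Taking $(C',D')$ to be the final separation, \cref{lem:move_over} (and its mirror) guarantee $\abs{C',D'}_E \le \abs{C,D}_E$ and $(C',D')^\blacktriangleleft = (C,D)^\blacktriangleleft$, completing the proof. The only real obstacle is the bookkeeping around the termination argument — making sure that performing a move cannot \emph{create} new offending edges in a way that prevents the potential from decreasing — and this is resolved cleanly by the bipartiteness observation that no edge meets both $A\sm B$ and $B\sm A$. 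I would also remark that one could run the two types of moves in two separate phases (first all the $D\to C$ moves for edges at $A\sm B$, then all the $C\to D$ moves for edges at $B\sm A$), since a move of the second kind never puts an edge incident with $A\sm B$ back into $D$; this makes the monotonicity transparent without even invoking a combined potential.
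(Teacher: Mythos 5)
Your proof is correct and takes essentially the same route as the paper's: the paper chooses $(C',D')$ with the same shift and no larger order so as to minimise the number of offending edges and then uses \cref{lem:move_over} to contradict minimality, which is just the extremal rephrasing of your iterate-until-the-potential-vanishes argument. Your explicit bipartiteness observation --- that no edge can meet both $C_D^\blacktriangleleft\sm D_C^\blacktriangleleft$ and $D_C^\blacktriangleleft\sm C_D^\blacktriangleleft$, so a move improving one count never spoils the other --- is the point the paper leaves implicit, and it is worth spelling out as you do.
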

\begin{proof}

Suppose $(C',D')$ is chosen such that ${(C',D')^{\blacktriangleleft}=(C,D)^{\blacktriangleleft}}$, such that the order of $(C',D')$ is at most the order of $(C,D)$, i.e.\ $\abs{C',D'}_E \leq \abs{C,D}_E$, and such that there are as few edges as possible incident with $C_D^\blacktriangleleft\sm D_C^\blacktriangleleft$ which do not lie in $C'\sm D'$ and as few edges as possible incident with $D_C^\blacktriangleleft\sm C_D^\blacktriangleleft$ which do not lie in $D'\sm C'$.

Suppose that there exists some such edge $e$ incident with $C_D^\blacktriangleleft\sm D_C^\blacktriangleleft$ which does not lie in $C'\sm D'$ or some such edge $e$ incident with $D_C^\blacktriangleleft\sm C_D^\blacktriangleleft$ which does not lie in $D'\sm C'$. Let us assume we are in the former case, as the argument in the latter case is identical.

Since $(C',D')^\blacktriangleleft=(C,D)^\blacktriangleleft$, by \cref{lem:move_over} we could then consider the separation $(C'\cup\{e\},D'\sm\{e\})$ which satisfies ${(C'\cup\{e\},D'\sm\{e\})^\blacktriangleleft=(C',D')^\blacktriangleleft}$ and ${\abs{C'\cup\{e\},D'\sm\{e\}}\le \abs{C',D'}\le \abs{C,D}}$, contradicting the choice of $(C',D')$.
\end{proof}

This observation enables us to perform the necessary calculations to prove \cref{lem:set_edge_decrease}.
\begin{proof}[Proof of \cref{lem:set_edge_decrease}]
By \cref{lem:minimal_induced} we may suppose that every edge incident with $C_D^\blacktriangleleft\sm D_C^\blacktriangleleft$ lies in $C\sm D$ and every edge incident with $D_C^\blacktriangleleft\sm C_D^\blacktriangleleft$ lies in $D\sm C$.

In this case, we can calculate $\abs{C,D}_E$ as follows.
\begin{align*}
&\abs{C,D}_E \\=& \sum_{v\in V} \left(\min(\abs{E(v) \cap C}, \abs{E(v) \cap D})-\abs{E(v)\cap C\cap D}/2\right)\\
=& \sum_{v\in Y} \left(\min(\abs{E(v) \cap C}, \abs{E(v) \cap D})-\abs{E(v)\cap C\cap D}/2\right)+\sum_{v\in C_D^\blacktriangleleft\cap D_C^\blacktriangleleft}\frac{1}{2}\abs{E(v)}\\
\ge& \sum_{v\in Y} \left(\min(\abs{N(v) \cap C_D^\blacktriangleleft}, \abs{N(v) \cap D_C^\blacktriangleleft})-\frac{1}{2}\abs{N(v)\cap C_D^\blacktriangleleft\cap D_C^\blacktriangleleft}\right)\\
&-\frac{1}{2}\abs{E(C_D^\blacktriangleleft)\cap E(D_C^\blacktriangleleft)}+\sum_{v\in C_D^\blacktriangleleft\cap D_C^\blacktriangleleft}\frac{1}{2}\abs{N(v)}\\
\ge& \sum_{v\in Y} \left(\min(\abs{N(v) \cap C_D^\blacktriangleleft}, \abs{N(v) \cap D_C^\blacktriangleleft})-\abs{N(v)\cap C_D^\blacktriangleleft\cap D_C^\blacktriangleleft}/2\right)\\=&\abs{C^\blacktriangleleft_D,D_C^{\blacktriangleleft}}_X. \qedhere
\end{align*}
\end{proof}

Analysing local changes will also play a crucial role in showing that, given a tangle $\tau$ in $\vS(E)$, the restriction of $\tau_X$ to a lower order is actually an orientation. For this we will need to make sure that separations obtained from one another by local changes cannot be oriented differently in $\tau_E$. However, whereas \cref{lem:move_over} allows us to move certain edges from $C\sm D$ to $D\sm C$ without changing the shift or increasing the order, for showing that the restriction of $\tau_X$ to a lower order actually is an orientation we will need to analyse a different type of local change.

More precisely, the next lemma will allow us to move certain edges from $D\sm C$ (or, symmetrically, $C\sm D$), to $C\cap D$ without increasing the order. Such an operation might change the shift of a separation, but, by \cref{lem:shift_hom}, it does so only in a controlled way: moving an edge from $D\sm C$ to $D\cap C$ will only result in a shift that is larger, in the sense of the partial order on the separation system, than the shift of the original $(C,D)$. Moreover, such a local change does not change the way a separation is oriented by a tangle.

\begin{LEM}\label{lem:move_to_middle}
    Let $(C,D)$ be a separation of $E$ and let $e\in E$ be incident with $C_D^\blacktriangleleft$. Then $\abs{C\cup\{e\},D}_E\le \abs{C,D}_E$ and ${(C\cup\{e\},D)^\blacktriangleleft\ge(C,D)^\blacktriangleleft}$.
\end{LEM}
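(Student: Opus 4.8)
The plan is to argue as in the proof of \cref{lem:move_over}, by isolating the contributions of the two endvertices of $e$ to the order function $\abs{\cdot}_E$ and showing that moving $e$ into $C\cap D$ does not increase either of them. Write $e=vw$ with $v\in C_D^\blacktriangleleft$, so that $\abs{E(v)\cap C}\ge\abs{E(v)\cap D}$. We may assume $e\in D\sm C$, since if $e\in C$ already there is nothing to prove (and if $e\notin C\cup D$ it is not a separation of $E$). Only the terms of the sum $\sum_{v\in V}$ indexed by $v$ and by $w$ change when we replace $(C,D)$ by $(C\cup\{e\},D)$: every other vertex $u$ has $E(u)\cap C$, $E(u)\cap D$, and $E(u)\cap C\cap D$ unchanged. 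So it suffices to bound the $v$-term and the $w$-term separately.

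For the endvertex $v\in C_D^\blacktriangleleft$: adding $e$ to $C$ increases $\abs{E(v)\cap C}$ by $1$, leaves $\abs{E(v)\cap D}$ unchanged, and increases $\abs{E(v)\cap C\cap D}$ by $1$ (since $e\in D$). As $\abs{E(v)\cap C}\ge\abs{E(v)\cap D}$, the minimum $\min(\abs{E(v)\cap C},\abs{E(v)\cap D})$ is either unchanged (if the inequality was strict) or increases by at most $1$ (if it was equality); in either case it increases by at most $1$, while the subtracted term $\abs{E(v)\cap C\cap D}/2$ increases by exactly $1/2$. Hence the $v$-term increases by at most $1-1/2=1/2$. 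Wait — I need a decrease, not an increase of $1/2$, so I must be more careful: if the inequality $\abs{E(v)\cap C}\ge\abs{E(v)\cap D}$ was strict, the minimum is unchanged and the $v$-term \emph{decreases} by $1/2$; if it was equality, then before the move $\abs{E(v)\cap C}=\abs{E(v)\cap D}$, after the move $\abs{E(v)\cap C}$ exceeds $\abs{E(v)\cap D}$ by $1$, so the min is unchanged and the $v$-term again decreases by $1/2$. Either way the $v$-term decreases by exactly $1/2$. For the endvertex $w$: adding $e$ to $C$ increases $\abs{E(w)\cap C}$ by $1$, leaves $\abs{E(w)\cap D}$ unchanged, and increases $\abs{E(w)\cap C\cap D}$ by $1$. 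The minimum therefore increases by at most $1$, and the subtracted term increases by exactly $1/2$, so the $w$-term increases by at most $1/2$. Summing the two contributions, the total change in $\abs{\cdot}_E$ is at most $-1/2+1/2=0$, giving $\abs{C\cup\{e\},D}_E\le\abs{C,D}_E$.

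Finally, for the statement about the shift, $(C,D)\le(C\cup\{e\},D)$ in the partial order on $\vS(E)$, since $C\subseteq C\cup\{e\}$ and $D\supseteq D$; by \cref{lem:shift_hom} it follows that $(C,D)^\blacktriangleleft\le(C\cup\{e\},D)^\blacktriangleleft$, as claimed. The main obstacle here is the bookkeeping at the vertex $v$: one must check that the equality case $\abs{E(v)\cap C}=\abs{E(v)\cap D}$ does not cause the minimum to jump, which it does not precisely because $e$ is being added to $C$ (the side on which $v$ is already at least as heavy), so the balance only tips further toward $C$; the subtracted double-counting term is what makes the net change non-positive rather than merely bounded.
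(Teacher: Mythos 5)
Your proof is correct and follows essentially the same route as the paper's: both isolate the contributions of the two endvertices of $e$, show the $v$-term drops by exactly $1/2$ (because $v\in C_D^\blacktriangleleft$ means the minimum is attained at $\abs{E(v)\cap D}$, which is unchanged, while the double-counting term grows by $1/2$) while the $w$-term grows by at most $1/2$, and both deduce the shift inequality from \cref{lem:shift_hom}. The mid-proof self-correction should be cleaned up, but the final argument is sound.
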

\begin{proof}
If $e\in C$, then it is nothing to show, so suppose that $e\in D\sm C$. Let $e=vw$. We observe that, since $v\in C_D^\blacktriangleleft$, we have
\begin{align*}&\min(\abs{E(v) \cap C}, \abs{E(v) \cap D})-\abs{E(v)\cap C\cap D}/2\\=&\abs{E(v)\cap D}-\abs{E(v)\cap C\cap D}/2\\=&\abs{E(v)\cap D}-\abs{E(v)\cap (C\cup\{e\})\cap D}/2+\frac{1}{2} \\
\intertext{and}
&\min(\abs{E(w) \cap C}, \abs{E(w) \cap D})-\abs{E(w)\cap C\cap D}/2\\\ge&\min(\abs{E(w)\cap (C\cup \{e\})},\abs{E(w)\cap D})-\abs{E(w)\cap (C\cup \{e\})\cap D}/2-\frac{1}{2}. \end{align*}
Thus $\abs{C\cup\{e\},D}_E\le \abs{C,D}_E$. We have $(C,D)^\blacktriangleleft\le(C\cup \{e\},D)^\blacktriangleleft$ by \cref{lem:shift_hom}.
\end{proof}

We now have all the ingredients at hand needed to show that the shift of a tangle, restricted to an appropriate order, is still a tangle. Let us start by considering the shift $\tau_X$ of a tangle $\tau$ of $\vS(E)$.

\begin{THM}\label{prop:edges_to_vtx}
   If $\tau$ is a tangle of $\vS_{2k}(E)$, then $\tau_X\cap \vS_k(X)$ is a tangle of~$\vS_k(X)$.
\end{THM}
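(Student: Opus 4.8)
The plan is to verify for $\sigma:=\tau_X\cap\vS_k(X)$ the two defining properties of a tangle of $\vS_k(X)$: that $\sigma$ is an orientation of $\vS_k(X)$, and that it satisfies~\eqref{prop:Xtangle}. The key ingredient will be a \emph{fattening} construction: given $(C,D)\in\tau$ with $(C,D)^\blacktriangleleft=(A,B)$, I want to produce a separation $(\tilde C,D)\in\tau$ with $\abs{\tilde C,D}_E<2k$ and $E(A)\subseteq\tilde C$. Once this is available both properties follow easily from bipartiteness, since for separations $(A_i,B_i)$ of $X$ we have $\bigcup_i E(A_i)=E\bigl(\bigcup_i A_i\bigr)$, and this is all of $E$ as soon as $\bigcup_i A_i=X$ (every edge of $G$ meets $X$).

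The easy half is that $\sigma$ orients every separation $\{A,B\}$ of $X$ with $\abs{A,B}_X<k$. By \cref{lem:order_shift} the separation $(A,B)^E=(E(A),E(B))$ has order at most $2\abs{A,B}_X<2k$, so $\tau$ contains one of its two orientations; and since $\bigl((A,B)^E\bigr)^\blacktriangleleft=(A,B)$, this places $(A,B)$ or $(B,A)$ into $\tau_X$, hence into $\sigma$.

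For the fattening step I would proceed as follows. Start from $(C,D)\in\tau$ with $C^\blacktriangleleft_D=A$ and process the edges of $E(A)\sm C$ one at a time. Such an edge $e$ lies in $D\sm C$, and if $v$ is its endpoint in $X$ then $v\in A$, hence $v$ still lies in the first shift-coordinate of the separation $(C',D)$ reached so far, because $C'\supseteq C$ gives $\abs{E(v)\cap C'}\ge\abs{E(v)\cap C}\ge\abs{E(v)\cap D}$. Thus \cref{lem:move_to_middle} applies and moves $e$ into $C'\cap D$ without increasing the order. When every edge of $E(A)$ has been processed we have reached $(\tilde C,D)$ with $\tilde C=C\cup E(A)$ and $\abs{\tilde C,D}_E\le\abs{C,D}_E<2k$. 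It remains to see that $(\tilde C,D)\in\tau$: as $(\tilde C,D)\in\vS_{2k}(E)$, one of its orientations lies in $\tau$, but if $(D,\tilde C)\in\tau$ then the three elements $(C,D),(D,\tilde C),(D,\tilde C)$ of $\tau$ have first sides with union $C\cup D=E$, contradicting~\eqref{prop:Etangle}; so $(\tilde C,D)\in\tau$. This last observation — that $\tau$ is closed under enlarging the first side while fixing the second — is the only place the tangle axiom enters the orientation argument, and it is the step I expect to be the conceptual heart of the proof.

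To conclude, suppose $(A_1,B_1),\dots,(A_n,B_n)\in\sigma$ with $A_1\cup\dots\cup A_n=X$; we use this with $n=2$ and $(A_2,B_2)=(B_1,A_1)$ for consistency of $\sigma$, and with $n=3$ for~\eqref{prop:Xtangle}. Choose preimages $(C_i,D_i)\in\tau$ of the $(A_i,B_i)$ under $(\cdot)^\blacktriangleleft$ and fatten each to $(\tilde C_i,D_i)\in\tau$ with $E(A_i)\subseteq\tilde C_i$ and $\abs{\tilde C_i,D_i}_E<2k$. Then $\tilde C_1\cup\dots\cup\tilde C_n\supseteq E(A_1)\cup\dots\cup E(A_n)=E(A_1\cup\dots\cup A_n)=E(X)=E$, so these separations — at most three of them, repeating one when $n=2$ — violate~\eqref{prop:Etangle}, a contradiction. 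For $n=2$ this shows $\sigma$ never orients a separation of $X$ both ways, so together with the previous paragraph $\sigma$ is an orientation of $\vS_k(X)$; for $n=3$ it gives~\eqref{prop:Xtangle}. Hence $\sigma$ is a tangle of $\vS_k(X)$.
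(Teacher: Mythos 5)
Your proof is correct, and while it leans on the same two ingredients as the paper's argument --- Proposition~\ref{lem:order_shift} for the ``easy half'' and \cref{lem:move_to_middle} as the engine for modifying edge-separations inside $\tau$ --- the way you deploy the latter is genuinely different. The paper first normalises one preimage to be exactly $(A,B)^E$ (which it may do since $\tau$ orients $(A,B)^E$), and then runs an extremal argument: among all $(C,D)\in\tau$ with $(C,D)^\blacktriangleleft\ge(B,A)$ it minimises the uncovered set $(D_1\cap D)\sm(C_1\cup C)$ and uses \cref{lem:move_to_middle} to shrink it further, a contradiction; the property~\eqref{prop:Xtangle} is then handled by a separate argument that relies on $\sigma$ already being known to be an orientation. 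You instead fatten \emph{each} preimage monotonically until its first side contains all of $E(A_i)$, checking at every step that the relevant endpoint stays in the first shift-coordinate and that membership in $\tau$ is preserved via the triple $(C,D),(D,\tilde C),(D,\tilde C)$; this removes the need for the WLOG normalisation and lets you dispatch consistency ($n=2$, with a repeated separation) and~\eqref{prop:Xtangle} ($n=3$) by one and the same covering computation $\bigcup_i\tilde C_i\supseteq E\bigl(\bigcup_i A_i\bigr)=E$. The trade-off is essentially aesthetic: the paper's extremal choice avoids naming the terminal separation $\tilde C=C\cup E(A)$ explicitly, while your construction is more uniform and makes the role of bipartiteness (every edge meets $X$) more visible. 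One shared caveat, not a gap on your part: both arguments use $\bigl((A,B)^E\bigr)^\blacktriangleleft=(A,B)$, which requires that no vertex of $X$ be isolated, an assumption the paper itself makes only implicitly.
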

\begin{proof}
We first note that the set $\tau_X\cap \vS_k(X)$ contains at least one of $(A,B)$ and $(B,A)$ for every separation $(A,B)\in \vS_k(X)$. Indeed, by Proposition \ref{lem:order_shift} $\abs{(A,B)^E}_E \leq 2 \abs{A,B}_X$, and so since $\tau$ is a tangle of $\vS_{2k}(E)$ either $(A,B)^E \in \tau$ or $(B,A)^E \in \tau$. 

Let us now show that for no separation $\{A,B\}$ we have both $(A,B)$ and $(B,A)$ in $\tau_X\cap \vS_k(X)$.
Suppose otherwise, then $\tau$ contains separations $(C_1,D_1)$ and $(C_2,D_2)$ such that $(C_1,D_1)^\blacktriangleleft=(A,B)$ and $(C_2,D_2)^\blacktriangleleft=(B,A)$.

 Note that, by \cref{lem:order_shift}, we have that ${\abs{E(A),E(B)}_E\le 2\abs{A,B}_X<2k}$, hence $(A,B)^E\in \tau$ or $(B,A)^E\in \tau$.
 As $(E(A),E(B))^\blacktriangleleft=(A,B)$, we may suppose without loss of generality that either $(C_1,D_1)=(A,B)^E$ or ${(C_2,D_2)=(B,A)^E}$. We suppose the former one, the latter case is similar.
 
 Now pick a separation $(C,D)\in \tau$ such that $(C,D)^\blacktriangleleft\ge (C_2,D_2)^\blacktriangleleft=(B,A)$ and the set $(D_1\cap D)\sm (C_1\cup C)$ is as small as possible. Then, since $\tau$ satisfies \eqref{prop:Etangle}, we have $C_1\cup C\neq E$. Hence there exists some edge $e\in (D_1\cap D)\sm (C_1\cup C)$. 
 
  Let $x$ be the end vertex of $e$ in $X$. Note that $e\in E(B)\sm E(A)$ since $E(A)=C_1$ and $e\notin C_1$. Thus $x\in B\sm A$. Moreover, as $B={C_2}_{D_2}^\blacktriangleleft\subseteq C_D^\blacktriangleleft$, we have that $x\in C_D^\blacktriangleleft$.
  Thus $e$ is incident with $C_D^\blacktriangleleft$.
  
  Consequently, we can apply \cref{lem:move_to_middle} to get that $\abs{C\cup \{e\},D}_E\le \abs{C,D}_E$. Thus $\tau$ orients $(C\cup \{e\},D)$ and therefore $(C\cup \{e\},D)\in\tau$, as $(D,C\cup \{e\})\in \tau$ would contradict \eqref{prop:Etangle} because of $(C,D)\in \tau$ and $D\cup C=E$.
  
  But this implies that $(C\cup \{e\},D)\in \tau$ is a better choice for $(C,D)$, since $(C\cup \{e\},D)^\blacktriangleleft\ge (C,D)^\blacktriangleleft$ by \cref{lem:shift_hom} and 
  \[
  (D_1\cap D)\sm (C_1\cup C)\supsetneq (D_1\cap D)\sm (C_1\cup (C\cup \{e\}))
  \]
  as $e\in (D_1\cap D)\sm (C_1\cup C)$.

Thus $\tau_X\cap \vS_k(X)$ is indeed an orientation.
That $\tau_X\cap \vS_k(X)$ satisfies the tangle property~\eqref{prop:Xtangle} now follows like this: If $(A_1,B_1),(A_2,B_2),(A_3,B_3)$ would be a triple in $\tau_X\cap \vS_k(X)$ contradicting the tangle property \eqref{prop:Xtangle}, then $\tau$ would need to orient $(A_1,B_1)^E$,$(A_2,B_2)^E$ and $(A_3,B_3)^E$ by \cref{lem:order_shift}. By the above observation $\tau$ orients them as  $(A_1,B_1)^E$,$(A_2,B_2)^E$ and $(A_3,B_3)^E$, since $\tau_X\cap \vS_k(X)$ does not contain any $(((A_i,B_i)^E)^\ast)^\blacktriangleleft=(B_i,A_i)$. However, the three separations $(A_1,B_1)^E$,$(A_2,B_2)^E$ and $(A_3,B_3)^E$ in $\tau$ then contradict the tangle property \eqref{prop:Etangle}, as every edge in $E$ is incident with at least one of the sets $A_1,A_2,A_3$. 
\end{proof}

A similar conclusion holds for the shift $\tau_E$ of a tangle $\tau$ of $\vS_{4k}(X)$.

\begin{THM}\label{prop:vtx_to_edges}
Given a tangle $\tau$ of $\vS_{4k}(X)$, then $\tau_E\cap \vS_k(E)$ is a tangle of~$\vS_k(E)$.
\end{THM}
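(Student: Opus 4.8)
The plan is to mirror the proof of Theorem~\ref{prop:edges_to_vtx}, working with the pullback map $\tau \mapsto \tau_E$ and the shift $(\cdot)^E$ in place of $\tau_X$ and $(\cdot)^\blacktriangleleft$. Let $\tau$ be a tangle of $\vS_{4k}(X)$ and set $\tau' := \tau_E \cap \vS_k(E)$. First I would show that $\tau'$ is an orientation of $\vS_k(E)$. So fix $(C,D) \in \vS_k(E)$, and let $(A,B) := (C,D)^\blacktriangleleft \in \vS(X)$. By Lemma~\ref{lem:set_edge_decrease} we have $\abs{A,B}_X \le \abs{C,D}_E < k < 4k$, so $\tau$ orients $(A,B)$; say $(A,B) \in \tau$. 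Then $(C,D)^\blacktriangleleft = (A,B) \in \tau$, so $(C,D) \in \tau_E \cap \vS_k(E) = \tau'$. Since $(D,C)^\blacktriangleleft = (B,A) \notin \tau$ (as $\tau$ is an orientation), $(D,C) \notin \tau'$. Hence $\tau'$ contains exactly one orientation of each separation in $S_k(E)$.

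Next I would verify property~\eqref{prop:Etangle}. Suppose for contradiction that $(C_1,D_1),(C_2,D_2),(C_3,D_3) \in \tau'$ with $C_1 \cup C_2 \cup C_3 = E$. Write $(A_i,B_i) := (C_i,D_i)^\blacktriangleleft \in \tau$, so $\abs{A_i,B_i}_X < k < 4k$. I want to derive a contradiction with property~\eqref{prop:Xtangle} for $\tau$, which would require showing $A_1 \cup A_2 \cup A_3 = X$. This is where the main obstacle lies: it is \emph{not} true in general that $C_1 \cup C_2 \cup C_3 = E$ forces $A_1 \cup A_2 \cup A_3 = X$, because a vertex $x \in X$ all of whose incident edges lie in, say, $C_1 \cap D_1$ would have $x \in A_1 \cap B_1$ but need not land in $A_1' := A_1 \setminus B_1$ in a way that helps. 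The fix is to use Lemma~\ref{lem:minimal_induced}: replace each $(C_i,D_i)$ by a separation $(C_i',D_i')$ of no larger order with the same shift $(A_i,B_i)$, such that every edge incident with $A_i \setminus B_i$ lies in $C_i' \setminus D_i'$ and every edge incident with $B_i \setminus A_i$ lies in $D_i' \setminus C_i'$; the new separations still lie in $\tau'$ (same shift, order $< k$). However, replacing $(C_i,D_i)$ by $(C_i',D_i')$ may destroy the hypothesis $C_1 \cup C_2 \cup C_3 = E$, so this alone is not enough — instead I would argue directly. A cleaner route: take a vertex $x \in X$; since $\tau$ orients $(A_i,B_i)$ with $(A_i,B_i) \in \tau$, and I want to show each $x$ lies in some $A_i$. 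Consider any edge $e$ incident with $x$; then $e \in C_j$ for some $j$, and I claim this forces $x \in A_j$. If $x \notin A_j = {C_j}^\blacktriangleleft_{D_j}$ then $\abs{E(x) \cap C_j} < \abs{E(x) \cap D_j}$, which is still consistent with $e \in C_j$, so one edge is not enough; but if \emph{every} edge at $x$ lies in $\bigcup C_i$, a counting/pigeonhole over the three separations shows that for at least one index $i$ we have $\abs{E(x) \cap C_i} \ge \abs{E(x) \cap D_i}$, i.e.\ $x \in A_i$. Making this pigeonhole precise — that $E(x) \subseteq C_1 \cup C_2 \cup C_3$ implies $x \in A_1 \cup A_2 \cup A_3$ — is the technical heart of the argument, and I expect it to go through because if $x$ were in none of the $A_i$ then $\abs{E(x) \cap C_i} < \abs{E(x)\cap D_i} \le \abs{E(x)}$ for all $i$ would have to be reconciled with the covering, but this needs care since the $C_i$ and $D_i$ can overlap; the minimality from Lemma~\ref{lem:minimal_induced} can be invoked on each $(C_i,D_i)$ individually to control the overlap at $x$.

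Granting the covering implication, we get $A_1 \cup A_2 \cup A_3 = X$ with all three $(A_i,B_i) \in \tau$ and all of order $< 4k$, contradicting property~\eqref{prop:Xtangle} for the tangle $\tau$ of $\vS_{4k}(X)$. Therefore no such triple exists in $\tau'$, and $\tau' = \tau_E \cap \vS_k(E)$ is a tangle of $\vS_k(E)$, completing the proof. The factor $4k$ (versus $2k$ in Theorem~\ref{prop:edges_to_vtx}) is what gives room for Lemma~\ref{lem:order_shift}'s bound $\abs{(A,B)^E}_E \le 2\abs{A,B}_X$ to be used in tandem with Lemma~\ref{lem:set_edge_decrease} at the two places where orders are compared; I would double-check the bookkeeping to confirm $4k$ suffices and that no better constant is needed. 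Finally, by the symmetry between $X$ and $Y$ noted in the text, Theorems~\ref{prop:edges_to_vtx} and~\ref{prop:vtx_to_edges} together with the composition $\tau \rightsquigarrow \tau_E \rightsquigarrow (\tau_E)_Y$ will yield Theorem~\ref{thm:shifttangle_set}, and iterating once more yields Theorem~\ref{thm:double_shift_set}, but those deductions belong to the subsequent discussion rather than to this proof.
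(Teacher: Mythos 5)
The orientation part of your argument is correct and matches the paper. But the ``technical heart'' you identify --- the claim that $E(x)\subseteq C_1\cup C_2\cup C_3$ forces $x\in A_1\cup A_2\cup A_3$ --- is false, and no amount of care or appeal to \cref{lem:minimal_induced} will rescue it. Concretely, partition $E(x)$ into three equal parts $P_1,P_2,P_3$ and take $C_i\cap E(x)=P_i$ and $D_i\cap E(x)=E(x)\sm P_i$. The $C_i$ cover $E(x)$, yet $\abs{E(x)\cap C_i}=d(x)/3<2d(x)/3=\abs{E(x)\cap D_i}$ for every $i$, so $x$ lies in none of the $A_i={C_i}^\blacktriangleleft_{D_i}$. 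So the set $Z=X\sm(A_1\cup A_2\cup A_3)$ can genuinely be non-empty even when the $C_i$ cover $E$, and your plan to contradict \eqref{prop:Xtangle} directly with the triple $(A_i,B_i)$ collapses. (Your guess about where the factor $4$ comes from is also off: \cref{lem:order_shift} plays no role here.)

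The paper's proof turns this failure into the argument. Since $\tau$ is a tangle, $Z\neq\es$; since each $z\in Z$ lies in every $B_i$, we have $\abs{E(z)\cap D_i}\ge\abs{E(z)\cap C_i}$, and combining this with the covering $C_1\cup C_2\cup C_3=E$ gives $\sum_i\abs{C_i,D_i}_E\ge\sum_{z\in Z}d(z)/2=\abs{E(Z,Y)}/2$, hence $\abs{Z,X}_X=\abs{E(Z,Y)}/2<3k$. So $\tau$ must orient $(Z,X)$, and \eqref{prop:Xtangle} forces $(Z,X)\in\tau$. Submodularity then gives $\abs{A_3\cup Z,B_3}_X\le\abs{A_3,B_3}_X+\abs{Z,X}_X<4k$ --- this is exactly where the hypothesis ``tangle of $\vS_{4k}(X)$'' is used --- and \eqref{prop:Xtangle} forces $(A_3\cup Z,B_3)\in\tau$. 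Now the triple $(A_1,B_1),(A_2,B_2),(A_3\cup Z,B_3)\in\tau$ has first components covering $X$ by the definition of $Z$, and this is the contradiction. In short: you cannot show the uncovered set is empty, but you can show it has small order and absorb it into one of the shifted separations; that absorption step is the missing idea.
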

\begin{proof}
By \cref{lem:set_edge_decrease}, given some separation $(C,D)\in \vS_k(E)$ we have that $\abs{(C,D)^\blacktriangleleft}_X\le \abs{(C,D)}_E$, thus $\tau$ contains exactly one of the two separations $(C,D)^\blacktriangleleft$ and ${((C,D)^\blacktriangleleft)^\ast=(D,C)^\blacktriangleleft}$, and consequently $\tau_E\cap \vS_k(E)$ contains exactly one of $(C,D)$ and $(D,C)$, i.e.\ $\tau_E\cap \vS_k(E)$ is an orientation of $\vS_k(E)$.

So, it remains to show that $\tau_E\cap \vS_k(E)$ satisfies the tangle property~\eqref{prop:Etangle}. Let us suppose for a contradiction that there is some set \[\{(C_1,D_1),(C_2,D_2), (C_3,D_3)\} \subseteq \tau_E\cap \vS_k(E)\] such that
$C_1 \cup C_2 \cup C_3 = E$.

Let $(A_i,B_i) = (C_i,D_i)^\blacktriangleleft$ for each $i=1,2,3$. Then, since $(A_i,B_i) \in \tau$ for each $i$, and $\tau$ is a tangle, it follows that the set $Z = X \setminus (A_1 \cup A_2 \cup A_3)$ is non-empty.

Since $Z \subseteq B_i = {D_i}^\blacktriangleleft_{C_i}$ for each $i$, we have that $|E(z) \cap D_i| \geq |E(z) \cap C_i|$ for all $z \in Z$ and $i = 1,2,3$. However, since $C_1 \cup C_2 \cup C_3 = E$,
\begin{align*}
    \sum_{i=1}^3 \abs{C_i,D_i}_E &= \sum_{i=1}^3 \sum_{v \in V} \left(\min \{ \abs{E(v) \cap C_i}, \abs{E(v) \cap D_i}\} - \frac{1}{2}\abs{E(v) \cap C_i \cap D_i}\right) \\
    &\geq \sum_{i=1}^3 \sum_{z\in Z}\left( \min \{ \abs{E(z) \cap C_i}, \abs{E(z) \cap D_i}\} - \frac{1}{2}\abs{E(z) \cap C_i \cap D_i} \right)\\
     &=  \sum_{z\in Z} \sum_{i=1}^3 \left( \abs{E(z) \cap C_i} - \frac{1}{2}\abs{E(z) \cap C_i \cap D_i}\right)\\
     &\geq  \sum_{z\in Z} d(z)/2 = \abs{E(Z,Y)}/2.
\end{align*}
As $\abs{C_i,D_i}_E<k$ for every $i=1,2,3$, this gives us $\abs{E(Z,X)} < 6k $ and thus
\begin{align*}
\abs{Z,X}_X = \abs{E(Z,Y)} / 2 < 3k.
\end{align*}
Hence, $\tau$ needs to orient $(Z,X)$. As $(X,Z)\in \tau$ would contradict \eqref{prop:Xtangle}, it follows that $(Z,X) \in \tau$.

Finally, since $\abs{A_3,B_3}_X \leq \abs{C_3,D_3}_E < k$ by \cref{lem:set_edge_decrease}, we can conclude by submodularity, that
\[ \begin{split}
\abs{A_3 \cup Z,B_3 \cap X}_X \leq \abs{A_3, B_3}_X + \abs{Z,X}_X
< 4k.
\end{split} \]
Hence, it follows that $\tau$ needs to orient $(A_3\cup Z,B_3)$ and as $(A_3,B_3)\in \tau$ it follows from \eqref{prop:Xtangle} that $(A_3\cup Z,B_3)\in \tau$, as $A_3\cup B_3=X$. However, then 
\[
\{ (A_1,B_1),(A_2,B_2),(A_3 \cup Z,B_3)\} \subseteq \tau
\]
and $A_1 \cup A_2 \cup (A_3 \cup Z) = X$, contradicting \eqref{prop:Xtangle}.
\end{proof}
\begin{COR}\label{cor:double_shift}
Let $\tau$ be a tangle of $\vS_{8k}(E)$, then 
\[
\tau'' := \left(\tau_X\cap \vS_{4k}(X)\right)_E\cap \vS_k(E)
\]
is a subset of $\tau$. Similarly, let $\tau'$ be a tangle of $\vS_{8k}(X)$, then \[
\tau''' := \left(\tau'_E\cap \vS_{2k}(E)\right)_{X}\cap \vS_k(X)
\]
is a subset of $\tau'$.
\end{COR}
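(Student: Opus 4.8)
The plan is to compose \cref{prop:edges_to_vtx} and \cref{prop:vtx_to_edges} with the right bookkeeping of the constants, and then to use that every intermediate set produced along the way is an \emph{orientation} of the relevant separation system, so that it cannot differ from $\tau$ (respectively $\tau'$) on any separation they both contain.

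For the first statement I would set $\sigma := \tau_X\cap\vS_{4k}(X)$. Since $\tau$ is a tangle of $\vS_{8k}(E)$, \cref{prop:edges_to_vtx} (applied with $4k$ in place of $k$) shows that $\sigma$ is a tangle of $\vS_{4k}(X)$; feeding $\sigma$ into \cref{prop:vtx_to_edges} then shows that $\tau'' = \sigma_E\cap\vS_k(E)$ is a tangle, hence an orientation, of $\vS_k(E)$. Now take any $(C,D)\in\tau''$ and suppose for contradiction that $(C,D)\notin\tau$. Since $\tau$ orients all of $\vS_{8k}(E)\supseteq\vS_k(E)$, this forces $(D,C)\in\tau$, so $(D,C)^\blacktriangleleft\in\tau_X$ by definition of $\tau_X$. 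By \cref{lem:set_edge_decrease} we have $\abs{(D,C)^\blacktriangleleft}_X\le\abs{D,C}_E=\abs{C,D}_E<k\le 4k$, so $(D,C)^\blacktriangleleft\in\tau_X\cap\vS_{4k}(X)=\sigma$, and therefore $(D,C)\in\sigma_E\cap\vS_k(E)=\tau''$. Then $\tau''$ would contain both orientations of a separation of $E$, contradicting that it is an orientation. Hence $(C,D)\in\tau$, and $\tau''\subseteq\tau$.

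The second statement runs along the same lines, except that the first shift now goes from $X$ to $E$, so it is the bound of \cref{lem:order_shift} rather than that of \cref{lem:set_edge_decrease} that controls the loss of order. Put $\rho := \tau'_E\cap\vS_{2k}(E)$. Since $\tau'$ is a tangle of $\vS_{8k}(X)$, \cref{prop:vtx_to_edges} (applied with $2k$ in place of $k$) shows that $\rho$ is a tangle of $\vS_{2k}(E)$, and then \cref{prop:edges_to_vtx} shows that $\tau''' = \rho_X\cap\vS_k(X)$ is a tangle, hence an orientation, of $\vS_k(X)$. Take $(A,B)\in\tau'''$ and suppose $(A,B)\notin\tau'$; then $(B,A)\in\tau'$, so $(B,A)^E\in\tau'_E$, and by \cref{lem:order_shift} $\abs{(B,A)^E}_E\le 2\abs{B,A}_X=2\abs{A,B}_X<2k$, whence $(B,A)^E\in\rho$. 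Since $\bigl((B,A)^E\bigr)^\blacktriangleleft=(B,A)$ (recall this identity from Section~\ref{subsec:tangles_edges}) and $\abs{B,A}_X=\abs{A,B}_X<k$, this gives $(B,A)\in\rho_X\cap\vS_k(X)=\tau'''$, contradicting that $\tau'''$ is an orientation. Hence $(A,B)\in\tau'$ and $\tau'''\subseteq\tau'$.

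I do not expect a genuine obstacle: all the substance is already contained in \cref{prop:edges_to_vtx}, \cref{prop:vtx_to_edges} and the order estimates of \cref{lem:set_edge_decrease} and \cref{lem:order_shift}. The only thing requiring care is to keep the orders of all the intermediate separations below the thresholds at which each of these results applies — this is exactly what pins down the chain of factors $8k, 4k, 2k, k$ — together with the (simple but essential) observation that, each intermediate object being an orientation, it suffices to verify the inclusions $\tau''\subseteq\tau$ and $\tau'''\subseteq\tau'$ separation by separation.
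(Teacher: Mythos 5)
Your proof is correct and follows essentially the same route as the paper's: compose \cref{prop:edges_to_vtx} and \cref{prop:vtx_to_edges} to see that the intermediate sets and the final set are tangles (hence orientations), and use \cref{lem:set_edge_decrease} resp.\ \cref{lem:order_shift} together with the identity $((A,B)^E)^\blacktriangleleft=(A,B)$ to track orders. The only cosmetic difference is that the paper shows $\tau\cap\vS_k(E)\subseteq\tau''$ directly and then invokes the orientation property, whereas you argue by contradiction from $(C,D)\in\tau''\setminus\tau$ — these are the same argument.
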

\begin{proof}
By \cref{prop:edges_to_vtx}, $\tau''$ is a tangle of $\vS_k(E)$.
Now, given any separation $(C,D)\in \vS_k(E)\cap \tau$, we have that $(C,D)^\blacktriangleleft\in \tau_X\cap \vS_{4k}(X)$ and thus $(C,D)$ is in $\tau''$.  As $\tau''$ is an orientation of $\vS_k(E)$, we then have that $\tau''\subseteq \tau$.

For the second part we note that, by \cref{prop:vtx_to_edges}, $\tau'''$  is a tangle of $\vS_k(X)$.
Given $(A,B)\in \vS_k(X)\cap \tau'$ we have, since $((A,B)^E)^\blacktriangleleft=(A,B)$, that $(A,B)^E$ is in $\tau'_E\cap \vS_{2k}(E)$ and thus $(A,B)$ is in $\tau'''$. As $\tau'''$ is an orientation of $\vS_k(X)$, we then have that $\tau'''\subseteq \tau'$.
\end{proof}
Putting these together, we obtain versions of \cref{thm:shifttangle_set} and \cref{thm:double_shift_set}, with slightly worse factors:

\begin{COR}
\label{thm:shifttangle_weaker}
Let $\tau$ be a tangle of $\vS_{8k}(X)$. Then $\tau' := \pull\tau\cap \vS_k(Y)$ is a tangle of $\vS_k(Y)$.
\end{COR}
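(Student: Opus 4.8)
The plan is to derive this by composing the two transfer theorems of \cref{subsec:tangles_edges}, routing the pull-back $\pull(\cdot)$ through the separations of the edge set. The starting point is the observation that, for a separation $(C,D)$ of $Y\!$, the two ways of shifting it to a separation of $X$ agree: since $G$ is bipartite with $C,D\sub Y\!$, for every $x\in X$ the edges at $x$ lying in $E(C)$ are precisely those joining $x$ to $C$, so $\abs{E(x)\cap E(C)}=\abs{N(x)\cap C}$, and hence $((C,D)^E)^\blacktriangleleft=(C,D)^\triangleleft$. Combined with the bound $\abs{(C,D)^E}_E\le 2\abs{C,D}_Y$ from the $Y$-version of \cref{lem:order_shift}, this yields, for every $(C,D)\in\vS_k(Y)$, the chain of equivalences
\[
  (C,D)\in\pull\tau
  \iff (C,D)^E\in\tau_E
  \iff (C,D)^E\in\tau_E\cap\vS_{2k}(E).
\]

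Next I would apply \cref{prop:vtx_to_edges} with parameter $2k$: as $\tau$ is a tangle of $\vS_{8k}(X)$, the set $\sigma:=\tau_E\cap\vS_{2k}(E)$ is a tangle of $\vS_{2k}(E)$. By the equivalences above, $\pull\tau\cap\vS_k(Y)=\{(C,D)\in\vS_k(Y):(C,D)^E\in\sigma\}$, and by the $Y$-analogue of the inclusion $\tau_X\supseteq\pullE\tau$ noted just before \cref{cor:double_shift}, this set is contained in $\sigma_Y\cap\vS_k(Y)$, which is a tangle of $\vS_k(Y)$ by the $X$--$Y$ symmetric form of \cref{prop:edges_to_vtx}. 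It therefore only remains to check that $\pull\tau\cap\vS_k(Y)$ is a full orientation of $\vS_k(Y)$, for then, being an orientation contained in a tangle, it must itself satisfy the consistency condition~\eqref{prop:Xtangle} (read for $Y\!$), since an offending triple in it would be one in the larger tangle. Fullness follows from \cref{lem:set_fixed_shift_order}: for $(C,D)\in\vS_k(Y)$ we have $\abs{(C,D)^\triangleleft}_X\le\abs{C,D}_Y<k$, so $\tau$ orients $(C,D)^\triangleleft$ and hence exactly one of $(C,D),(D,C)$ lies in $\pull\tau$.

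I do not expect a genuine obstacle here: the purpose of \cref{subsec:tangles_edges} is precisely to make this corollary a formal consequence of the edge transfer theorems, so the only care needed is in tracking the constants ($8k$ on the $X$-side, down to $2k$ on the edges, down to $k$ on the $Y$-side), in invoking the harmless symmetry between $X$ and $Y\!$ for \cref{prop:edges_to_vtx}, and in verifying the factorisation $((C,D)^E)^\blacktriangleleft=(C,D)^\triangleleft$ of the shift maps. If one prefers to avoid appealing to \cref{prop:edges_to_vtx}, one can instead verify \eqref{prop:Xtangle} for $\pull\tau\cap\vS_k(Y)$ directly: a triple $(C_1,D_1),(C_2,D_2),(C_3,D_3)$ in this set with $C_1\cup C_2\cup C_3=Y\!$ would, via $(\cdot)^E$, yield the triple $(C_i,D_i)^E\in\sigma$ with $E(C_1)\cup E(C_2)\cup E(C_3)=E$ (as $G$ is bipartite), contradicting~\eqref{prop:Etangle}.
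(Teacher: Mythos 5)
Your proof is correct and follows essentially the same route as the paper's: factor the pull-back through the edge separations via the identity $((C,D)^E)^\blacktriangleleft=(C,D)^\triangleleft$, apply \cref{prop:vtx_to_edges} to get a tangle of $\vS_{2k}(E)$, and then \cref{prop:edges_to_vtx} (with $X$ and $Y\!$ swapped) to descend to $\vS_k(Y)$. The paper compresses all of this into the one-line claim that $\tau'=(\tau_E\cap\vS_{2k}(E))_Y\cap\vS_k(Y)$; you have merely supplied the containment-plus-fullness argument that justifies it.
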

\begin{proof}
It is easy to see that $\tau'=\left(\tau_E\cap \vS_{2k}(E)\right)_Y\cap \vS_k(Y)$ which is a tangle by \cref{prop:vtx_to_edges} and \cref{prop:edges_to_vtx}.
\end{proof}

\begin{COR}
\label{thm:double_shift_weaker}
    Let $\tau$ be a tangle of $\vS_{64k}(X)$, let $\tau' = \pull\tau\cap \vS_{8k}(Y)$, and let $\tau'' = \pullright\tau'\cap \vS_k(X)$. Then $\tau'' \subseteq \tau$.
\end{COR}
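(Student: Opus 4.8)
The plan is to route both pull-backs through the edge separation system $\vS(E)$, reducing the claim to rescaled and $X\leftrightarrow Y$-symmetric instances of \cref{prop:vtx_to_edges} and of the first part of \cref{cor:double_shift}. The one auxiliary fact I would establish first is the easy half of the factorization behind \cref{thm:shifttangle_weaker}: for any $\rho\subseteq\vS(X)$ and any $m>0$,
\[
\pull\rho\cap\vS_m(Y)\ \subseteq\ (\rho_E\cap\vS_{2m}(E))_Y\cap\vS_m(Y),
\]
and symmetrically with $X$ and $Y$ exchanged. This holds because, for $(C,D)\in\pull\rho\cap\vS_m(Y)$, the edge separation $(C,D)^E$ (defined for separations of $Y$ exactly as for separations of $X$) satisfies $((C,D)^E)^\blacktriangleleft=(C,D)^\triangleleft\in\rho$, so $(C,D)^E\in\rho_E$; it has $\abs{(C,D)^E}_E\le 2\abs{C,D}_Y<2m$ by \cref{lem:order_shift}; and it shifts back as $((C,D)^E)^\blacktriangleright=(C,D)$. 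Here I use the bipartiteness identity $\abs{N(y)\cap C}=\abs{E(y)\cap E(C)}$ (which also gives $(A,B)^\triangleright=((A,B)^E)^\blacktriangleright$ and $(C,D)^\triangleleft=((C,D)^E)^\blacktriangleleft$) and that $G$ is connected (which gives $((A,B)^E)^\blacktriangleleft=(A,B)$ and its $Y$-analogue), both as already used in this section.

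Now I would set $\sigma:=\tau_E\cap\vS_{16k}(E)$. Since $\tau$ is a tangle of $\vS_{64k}(X)=\vS_{4\cdot16k}(X)$, \cref{prop:vtx_to_edges} makes $\sigma$ a tangle of $\vS_{16k}(E)$. The inclusion above with $m=8k$ gives $\tau'=\pull\tau\cap\vS_{8k}(Y)\subseteq\sigma_Y\cap\vS_{8k}(Y)$. Next I would apply the first part of \cref{cor:double_shift} with the roles of $X$ and $Y$ exchanged and $k$ rescaled to $2k$: as $\sigma$ is a tangle of $\vS_{16k}(E)=\vS_{8\cdot2k}(E)$, this yields $(\sigma_Y\cap\vS_{8k}(Y))_E\cap\vS_{2k}(E)\subseteq\sigma$. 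Writing $\sigma':=\tau'_E\cap\vS_{2k}(E)$ and using monotonicity of the operator $(\cdot)_E$, these combine to
\[
\sigma'=\tau'_E\cap\vS_{2k}(E)\ \subseteq\ (\sigma_Y\cap\vS_{8k}(Y))_E\cap\vS_{2k}(E)\ \subseteq\ \sigma .
\]

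For the second pull-back I would apply the easy inclusion once more, with $X$ and $Y$ swapped and $m=k$, to $\tau'\subseteq\vS(Y)$, obtaining
\[
\tau''=\pullright\tau'\cap\vS_k(X)\ \subseteq\ (\tau'_E\cap\vS_{2k}(E))_X\cap\vS_k(X)\ =\ \sigma'_X\cap\vS_k(X)\ \subseteq\ \sigma'_X .
\]
Since $(\cdot)_X$ is monotone and $\sigma'\subseteq\sigma$, we have $\sigma'_X\subseteq\sigma_X$; and $\sigma_X\subseteq\tau$, because $\sigma\subseteq\tau_E$ and by definition every $(C,D)\in\tau_E$ has $(C,D)^\blacktriangleleft\in\tau$, so every element of $\sigma_X$ lies in $\tau$. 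Chaining, $\tau''\subseteq\sigma'_X\subseteq\sigma_X\subseteq\tau$, which is the claim.

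The step I expect to cost the most effort is pure bookkeeping: keeping straight the two kinds of ``subscript-$E$'' operations (pull-back of a subset of $\vS(X)$ or of $\vS(Y)$ along $(\cdot)^\blacktriangleleft$ respectively $(\cdot)^\blacktriangleright$) against the ``subscript-$X$/$Y$'' operations (push-forward of a subset of $\vS(E)$ along those shifts), and verifying that each rescaled, $X\leftrightarrow Y$-swapped hypothesis of \cref{prop:vtx_to_edges} and \cref{cor:double_shift} is genuinely met, with the orders $16k$, $8k$, $2k$, $k$ lining up as above. Beyond this there is no real obstacle: one never needs that $\tau'$ or $\tau''$ is itself a tangle, nor the harder reverse inclusion of the factorization identity, and connectedness of $G$ enters only through $((A,B)^E)^\blacktriangleleft=(A,B)$ and its $Y$-analogue.
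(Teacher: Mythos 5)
Your proof is correct and takes essentially the same route as the paper's: both pull-backs are factored through the edge separation system, and \cref{cor:double_shift} does the main work, with the orders $64k,16k,8k,2k,k$ lining up exactly as in the paper. You streamline slightly by proving only the inclusion $\pull\rho\cap\vS_m(Y)\subseteq(\rho_E\cap\vS_{2m}(E))_Y\cap\vS_m(Y)$ rather than the equality asserted in the proof of \cref{thm:shifttangle_weaker}, and by replacing the paper's second appeal to \cref{cor:double_shift} with the direct observation that $(\tau_E)_X\subseteq\tau$; both simplifications are valid.
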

\begin{proof}
Consider $\tau_E\cap \vS_{16k}(E)$. By \cref{thm:shifttangle_weaker}, we have that
\[
\tau'=\left(\tau_E\cap \vS_{16k}(E)\right)_Y\cap \vS_{8k}(Y).
\] 
Moreover, again by \cref{thm:shifttangle_weaker} we have that 
\[
\tau''=\left(\tau'_E\cap \vS_{2k}(E)\right)_X\cap \vS_k(X).
\]
But now, by \cref{cor:double_shift}, we note that 
\[
\left(\left(\tau_E\cap \vS_{16k}(E)\right)_Y\cap \vS_{8k}(Y)\right)_E\cap \vS_{2k}(E)\subseteq \tau_E\cap \vS_{16k}(E)
\] 
and thus, 
\begin{align*}
\tau''&=\left(\left(\left(\tau_E\cap \vS_{16k}(E)\right)_Y\cap \vS_{8k}(Y)\right)_E\cap \vS_{2k}(E)\right)_X\cap \vS_k(X)\\
&\subseteq \left(\tau_E\cap \vS_{16k}(E)\right)_X\cap \vS_k(X).
\end{align*}
Again by \cref{cor:double_shift} we have that 
\[
\left(\tau_E\cap \vS_{16k}(E)\right)_X\cap \vS_k(X)\subseteq \tau,
\] 
which shows the claim.
\end{proof}

\subsection{Variations, Generalisations and open problems}\label{sec:generalisations}
A natural question to consider at this point is how much these results depend on the very specific set up we have here. 

For example, whilst we considered a very specific type of tangle, there are other types of `tangle-like' clusters which one might wish to consider. Perhaps the most general condition one could consider here would be that of a \emph{regular profile}. A \emph{profile} of a set separation system is an orientation which neither contains any two separations pointing away from each other -- i.e., if ${(A_1,B_1)\le (A_2,B_2)}$ for distinct separations $\{A_1,B_1\},\{A_2,B_2\}$ then a profile cannot contain both, $(B_1,A_1)$ and $(A_2,B_2)\in P$ --
nor does it contain any triple of separations of the form
\[
\{ (A_1,B_1),(A_2,B_2), (B_1\cap B_2,A_1\cup A_2) \}.
\]
A \emph{regular} orientation is one which does not contain any cosmall separations, that is a separation $(V,B)$ where $V\!$ is the underlying set and $B \subseteq V\!$. Tangles are regular profiles, but regular profiles model a broader class of clusters. 

Similar statements as in Theorems \ref{thm:shifttangle_set} and \ref{thm:double_shift_set} can be shown to hold via similar arguments for regular profiles. In particular the following Theorems hold:
\begin{THM}\label{thm:shiftprofile}
Let $P$ be a regular profile of $\vS_{3k}(X)$, then $P' := \pull P\cap \vS_k(Y)$ is a regular profile of $\vS_k(Y)$.
\end{THM}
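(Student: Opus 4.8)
The plan is to check the four defining properties of a regular profile for $P'$ in turn: that it is an orientation of $\vS_k(Y)$, that it is regular, that it contains no two separations pointing away from each other, and that it contains no profile triple. The first three are short and parallel arguments already in the paper. That $P'$ is an orientation of $\vS_k(Y)$ is verbatim the argument before \cref{thm:shifttangle_set}: by \cref{lem:set_fixed_shift_order}, any $(C,D)\in\vS_k(Y)$ has $\abs{(C,D)^\triangleleft}_X\le\abs{C,D}_Y<k$, so $(C,D)^\triangleleft\in\vS_k(X)\sub\vS_{3k}(X)$ is oriented by $P$, and since $(\cdot)^\triangleleft$ commutes with the involution, $P'$ contains exactly one of $(C,D)$ and $(D,C)$. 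Along the way I would record, by the computation of \cref{lem:shift_hom} with $N(x)$ in place of $E(x)$, that $(\cdot)^\triangleleft\colon\vS(Y)\to\vS(X)$ is order-preserving. For regularity: if $(Y,D)\in P'$, then $D\sub Y$ forces $Y^\triangleleft_D=X$, so $(Y,D)^\triangleleft=(X,\{x\in X:N(x)\sub D\})$ is a cosmall separation of $X$ lying in $P$, contradicting the regularity of $P$. For the pointing-away condition: if $(C_1,D_1)\le(C_2,D_2)$ are distinct with $(D_1,C_1)\in P'$ and $(C_2,D_2)\in P'$, then with $(A_i,B_i):=(C_i,D_i)^\triangleleft$ order-preservation gives $(A_1,B_1)\le(A_2,B_2)$ while $(B_1,A_1),(A_2,B_2)\in P$, which is forbidden in the profile $P$ (in the degenerate case where the two $X$-separations coincide, the inequality forces one of them to be cosmall or to equal $(X,X)$, again impossible for $P$).

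For the profile-triple condition I would imitate the direct proof of \cref{thm:shifttangle_set}, run as in its edge variant \cref{prop:vtx_to_edges}. Suppose $\{(C_1,D_1),(C_2,D_2),(C_3,D_3)\}\subseteq P'$ with $(C_3,D_3)=(D_1\cap D_2,\,C_1\cup C_2)=\bigl((C_1,D_1)\vee(C_2,D_2)\bigr)^\ast$, and set $(A_i,B_i):=(C_i,D_i)^\triangleleft\in P$; by \cref{lem:set_fixed_shift_order} each has order below $k$. One checks $C_1\cup C_2\cup C_3=Y$ automatically, since $C_3=D_1\cap D_2\supseteq Y\sm(C_1\cup C_2)$. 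Put $Z:=X\sm(A_1\cup A_2\cup A_3)$; every $z\in Z$ lies in $B_1\cap B_2\cap B_3$, and the charging argument from \cref{prop:vtx_to_edges}, sharpened using $C_1\cup C_2\cup C_3=Y$ so that each $z\in Z$ contributes at least $\abs{N(z)}$ to $\sum_{i=1}^3\abs{C_i,D_i}_Y<3k$, gives $\abs{Z,X}_X=\abs{E(Z,Y)}/2<\tfrac{3}{2}k$. Since $\abs{Z,X}_X<3k$ and the inverse $(X,Z)$ is cosmall, regularity of $P$ forces $(Z,X)\in P$. Absorbing $(Z,X)$ into the order-$<k$ shift $(A_3,B_3)$: by submodularity of $\abs{\cdot}_X$ (\cref{lem:order_submodular}) $\abs{(A_3,B_3)\vee(Z,X)}_X<k+\tfrac32 k=\tfrac52 k<3k$, so $P$ orients $(A_3\cup Z,B_3)$, and as $(B_3,A_3\cup Z)=\bigl((A_3,B_3)\vee(Z,X)\bigr)^\ast$ together with $(A_3,B_3),(Z,X)\in P$ would be a profile triple in $P$, we get $(A_3\cup Z,B_3)\in P$. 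Using $A_1\cup A_2\cup A_3\cup Z=X$, I would then combine $(A_1,B_1),(A_2,B_2),(A_3\cup Z,B_3)$, via submodularity and the profile property of $P$ (first passing to $(A_1,B_1)\vee(A_2,B_2)\in P$, of order $<2k$), into a profile triple inside $P$, contradicting that $P$ is a profile.

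The hard part will be precisely this last combination: one has to order the submodular steps so that no separation used along the way ever reaches order $3k$, and check that the shifts really do align into a genuine profile triple $\{\vr,\vs,(\vr\vee\vs)^\ast\}$ of $P$ rather than a merely incomparable configuration. This is exactly where the profile case improves on the tangle case, which needs the factor $4k$ in \cref{thm:shifttangle_set}: the sharpened bound $\abs{Z,X}_X<\tfrac32 k$ leaves just enough room to absorb $(Z,X)$ into one order-$<k$ shift without exceeding $3k$, so that no further submodular step is required. As always, I would keep track of the degenerate cases in which two of the separations coincide; each of these collapses immediately to a contradiction with $P$ being an orientation or with its regularity.
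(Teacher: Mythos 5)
The paper itself only states this theorem and defers its proof to the extended version, so your proposal has to stand on its own; I have checked it against the template of \cref{prop:vtx_to_edges}, which is clearly the intended model. The first four checks (orientation, order-preservation of $(\cdot)^\triangleleft$, regularity, consistency) are all correct, including your treatment of the degenerate cases. Your sharpened charging bound $\abs{Z,X}_X<\tfrac{3}{2}k$ is also correct, but the justification is thinner than you suggest: $C_1\cup C_2\cup C_3=Y$ alone does \emph{not} give a contribution of at least $1$ per $y\in N(z)$, since a $y$ lying in exactly one $C_i$ and also in $D_i$ would contribute only $\tfrac{1}{2}$. What saves the bound is the full profile-triple structure: such a $y$ lies in $D_1\cap D_2=C_3$ and in $D_3=C_1\cup C_2$, so it picks up a second $\tfrac{1}{2}$ from another index. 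A short case analysis over the membership pattern of $y$ in $C_1,D_1,C_2,D_2$ (which determines its membership in $C_3,D_3$) is needed here.

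The genuine gap is exactly at the step you flag as the hard part, and it cannot be closed the way you propose. The final contradiction requires $P$ to orient the overall supremum, the cosmall separation $(X,W)$ with $W=B_1\cap B_2\cap B_3$; but however you bracket the joins of $(A_1,B_1),(A_2,B_2),(A_3,B_3),(Z,X)$, submodularity only bounds the order of that root by the sum $k+k+k+\tfrac{3}{2}k=\tfrac{9}{2}k$ of the orders of the pieces, which exceeds $3k$. So ``ordering the submodular steps'' never succeeds, and without $P$ orienting $(X,W)$ no profile triple forms. The fix is to bound $\abs{X,W}_X$ directly rather than by submodularity: your charging argument uses about $z\in Z$ only that $z\in B_1\cap B_2\cap B_3$, so it applies verbatim to every $x\in W$ and yields $\abs{E(W,Y)}\le\sum_{i=1}^3\abs{C_i,D_i}_Y<3k$, hence $\abs{X,W}_X=\abs{E(W,Y)}/2<\tfrac{3}{2}k<3k$. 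Then $P$ orients $\{X,W\}$, regularity forces $(W,X)\in P$, and $\{(A_1\cup A_2,B_1\cap B_2),\,(A_3\cup Z,B_3),\,(W,X)\}$ is a profile triple in $P$, since $(W,X)$ is the inverse of the supremum of the first two separations. With this observation in hand, the detour through $(Z,X)$ and $(A_3\cup Z,B_3)$ could even be streamlined, but as written it does no harm.
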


\begin{THM}\label{thm:double_shift_profile}
    Let $P$ be a regular profile of $\vS_{9k}(X)$, $P' = \pull P\cap \vS_k(Y)$ and $P'' = \pullright P'\cap \vS_k(X)$, then $P'' \subseteq P$.
\end{THM}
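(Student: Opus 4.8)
The plan is to deduce Theorem~\ref{thm:double_shift_profile} from Theorem~\ref{thm:shiftprofile}, applied twice — mirroring the way \cref{cor:double_shift} was deduced from \cref{prop:vtx_to_edges} and \cref{prop:edges_to_vtx} in the tangle case. Writing $9k=3\cdot 3k$ and $3k=3\cdot k$, one application of \cref{thm:shiftprofile} shows that $\pull P\cap\vS_{3k}(Y)$ is a regular profile of $\vS_{3k}(Y)$, and a second application that $\pullright\bigl(\pull P\cap\vS_{3k}(Y)\bigr)\cap\vS_k(X)$ is a regular profile of $\vS_k(X)$. Since $\pull P\cap\vS_k(Y)\subseteq\pull P\cap\vS_{3k}(Y)$, we have $P''\subseteq\pullright\bigl(\pull P\cap\vS_{3k}(Y)\bigr)\cap\vS_k(X)$; and $P''$ is itself an orientation of $\vS_k(X)$ — here one uses \cref{lem:set_fixed_shift_order}, so that $(A,B)^\triangleright$ has order $<k$ whenever $(A,B)$ does, whence the orientation $P':=\pull P\cap\vS_k(Y)$ (a restriction of the above profile of $\vS_{3k}(Y)$) orients $(A,B)^\triangleright$, so that exactly one of $(A,B),(B,A)$ lies in $\pullright P'$. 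Hence the two sets coincide and $P''$ is a regular profile of $\vS_k(X)$.

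In particular $P''$ and $P\cap\vS_k(X)$ are both orientations of $\vS_k(X)$, so it suffices to prove the reverse inclusion $P\cap\vS_k(X)\subseteq P''$. Unravelling the definitions, and once more invoking \cref{lem:set_fixed_shift_order} so that $(A,B)^\triangleright$ automatically has order $<k$, this reduces to a single statement about separations: \emph{if $(A,B)\in P$ with $\abs{A,B}_X<k$, then $((A,B)^\triangleright)^\triangleleft\in P$.} This is the profile analogue of the ``$\tau''\subseteq\tau$'' half of \cref{cor:double_shift}, and I would prove it directly.

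So suppose $(A,B)\in P$ has order $<k$, and set $(A_1,B_1):=((A,B)^\triangleright)^\triangleleft$. By \cref{lem:set_fixed_shift_order}, $\abs{A_1,B_1}_X\le\abs{(A,B)^\triangleright}_Y\le\abs{A,B}_X<k$, so $P$ orients $(A_1,B_1)$ and it remains to exclude $(B_1,A_1)\in P$. Assume $(B_1,A_1)\in P$, and consider the separation $Z:=X\sm(A\cup B_1)$ of $X$. The key estimate is $\abs{Z,X}_X<k$: every $x\in Z$ lies in $B\sm A$ and, not lying in $B_1=(B_A^\triangleright)^\triangleleft_{A_B^\triangleright}$, has strictly more neighbours in $A_B^\triangleright$ than in $B_A^\triangleright$, hence more than half of its incident edges run to $A_B^\triangleright$; summing over $x\in Z$ gives $\abs{E(Z,Y)}<2\abs{E(Z,A_B^\triangleright)}$, while for $y\in A_B^\triangleright$ one has $\abs{N(y)\cap Z}\le\abs{N(y)\cap(B\sm A)}\le\min\{\abs{N(y)\cap A},\abs{N(y)\cap B}\}-\abs{N(y)\cap A\cap B}/2$, so $\abs{E(Z,A_B^\triangleright)}\le\abs{A,B}_X$ and therefore $\abs{Z,X}_X=\abs{E(Z,Y)}/2<\abs{A,B}_X<k$. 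Now $(Z,X)$ has order $<k<9k$, so $P$ orients it; since its inverse $(X,Z)$ is cosmall and $P$ is regular, $(Z,X)\in P$. Then $(A,B),(Z,X),(B_1,A_1)$ all lie in $P$, and every join involved has order below $\abs{A,B}_X+\abs{Z,X}_X+\abs{A_1,B_1}_X<3k\le 9k$, so closure of the profile $P$ under joins puts their join in $P$. But this join is $(A\cup Z\cup B_1,\,B\cap A_1)=(X,\,B\cap A_1)$, which is cosmall — contradicting regularity of $P$. Hence $(B_1,A_1)\notin P$, so $(A_1,B_1)\in P$, as required.

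The only genuine work is the key estimate $\abs{Z,X}_X<k$ and, upstream of it, Theorem~\ref{thm:shiftprofile} itself (whose proof is the profile counterpart of \cref{prop:vtx_to_edges}/\cref{prop:edges_to_vtx}); everything else is bookkeeping with orders via submodularity (\cref{lem:order_submodular}) and \cref{lem:set_fixed_shift_order}. The point that makes the profile bound $9k$ cleaner than the tangle bound $16k$ of \cref{thm:double_shift_set} is that the auxiliary separation of order up to $3k$ produced and oriented in the tangle triple argument of \cref{prop:vtx_to_edges} is here replaced by the two things the profile axioms hand us for free: closure under joins, and the fact that a regular profile can never orient a cosmall separation. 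The expected obstacle is spotting the right auxiliary separation, namely $Z=X\sm(A\cup B_1)$ — the part of the information in $(A,B)^\triangleright$ that the round trip ``loses and misrepresents'' — and confirming that its $X$-order is controlled by $\abs{A,B}_X$.
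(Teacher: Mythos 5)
Your proof is correct. Note that the paper itself defers the proof of Theorem~\ref{thm:double_shift_profile} to its extended version, so there is nothing to compare against line by line; but your direct argument is exactly in the style of the paper's proof of \cref{prop:vtx_to_edges}: an auxiliary ``left-over'' set $Z$, a degree-counting bound on $\abs{Z,X}_X$ (your estimate $\abs{E(Z,Y)}\le 2\abs{E(Z,A_B^\triangleright)}\le 2\abs{A,B}_X$ checks out, using that each $x\in Z$ sends more than half its edges to $A_B^\triangleright$ and that each per-vertex term $\abs{A,B}_y$ is non-negative), regularity to orient $(Z,X)$, and a join forced into $P$ that turns out to be cosmall. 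The profile axioms indeed replace the triple argument cleanly, and as you observe this is where the constant improves: your argument only ever asks $P$ to orient separations of order $<3k$, so you in fact prove the statement with $\vS_{3k}(X)$ in place of $\vS_{9k}(X)$ — a genuinely stronger constant than the stated one (the paper's $9k=3^2k$ presumably comes from iterating \cref{thm:shiftprofile}). Two small points you leave implicit but which do hold: (i) the joins you form must lie in $S_{9k}(X)$, so one should rule out that $(A\cup Z,B)$ or $(X,B\cap A_1)$ equals the excluded separation $(X,X)$; this would force $B=X$, which forces $B_1=X$ and hence $(A_1,B_1)=(X,X)$, impossible since $\abs{A_1,B_1}_X<k\le\abs{X,X}_X$ by \cref{lem:set_fixed_shift_order} and the maximality of $\abs{X,X}_X$ — the paper's own proofs are equally silent on this. (ii) The strict inequality $\abs{E(Z,Y)}<2\abs{E(Z,A_B^\triangleright)}$ fails when $Z=\es$, but then $\abs{Z,X}_X=0<k$ anyway, so nothing breaks.
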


We can also deduce statements analogue to \cref{prop:edges_to_vtx} and \cref{prop:vtx_to_edges} for profiles, i.e. we can consider regular profiles on subset of the set $\vS(E)$ of separations of the edges of our bipartite graph and show the following:
\begin{PROP}\label{prop:edges_to_vtx_profiles} 
   Let $P$ be a regular profile of $\vS_{2k}(E)$, and let us denote ${P_X:=\{(C,D)^\blacktriangleleft \::\: (C,D)\in P\}}$. Then, the set $P_X\cap \vS_k(X)$ is a regular profile of $\vS_k(X)$.
\end{PROP}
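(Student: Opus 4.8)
The plan is to follow the template of the proof of \cref{prop:edges_to_vtx}, replacing the tangle property \eqref{prop:Etangle} by the two defining properties of a regular profile and checking that each still goes through. First I would verify that $P_X\cap\vS_k(X)$ is an orientation of $\vS_k(X)$. This part of the argument is formally identical to the one in \cref{prop:edges_to_vtx}: given $(A,B)\in\vS_k(X)$, \cref{lem:order_shift} gives $\abs{(A,B)^E}_E\le 2\abs{A,B}_X<2k$, so $P$ orients $(A,B)^E$ and hence $P_X$ contains at least one of $(A,B),(B,A)$. For the other direction, suppose both $(A,B)$ and $(B,A)$ lie in $P_X\cap\vS_k(X)$; then there are $(C_1,D_1),(C_2,D_2)\in P$ shifting to $(A,B)$ and $(B,A)$ respectively, and we may assume $(C_1,D_1)=(A,B)^E$ (since $((A,B)^E)^\blacktriangleleft=(A,B)$ and $P$ must orient one of $(A,B)^E$, $(B,A)^E$). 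Then exactly as before we pick $(C,D)\in P$ with $(C,D)^\blacktriangleleft\ge(C_2,D_2)^\blacktriangleleft=(B,A)$ minimising $(D_1\cap D)\sm(C_1\cup C)$ --- but here we need a substitute for the step that used \eqref{prop:Etangle} to conclude $C_1\cup C\ne E$. This is the point where profiles behave differently, and it is the main obstacle.

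The resolution is that a regular profile cannot contain $(C_1,D_1),(C,D)$ with $C_1\cup C=E$ either: if $C_1\cup C=E$ then $D_1\cap D\subseteq E\sm(C_1\cup C)$ is empty, so $(D_1\cap D, C_1\cup C)=(\emptyset,E)$; but $(C_1,D_1)\le(C_1\cup C,D_1\cap D)$ and $(C,D)\le(C_1\cup C,D_1\cap D)$, and since $P$ is a profile it cannot contain both $(C_1,D_1)$ and $(C,D)$ unless it also orients $(C_1\cup C,D_1\cap D)=(E,\emptyset)$ towards $(E,\emptyset)$ --- but $(E,\emptyset)$ is cosmall, contradicting regularity. (One should also double-check the degenerate orientation: regularity forbids $(E,B)$ for every $B\sub E$, in particular $(E,\emptyset)\notin P$.) Hence $C_1\cup C\ne E$, an edge $e\in(D_1\cap D)\sm(C_1\cup C)$ exists, and the rest of the orientation argument --- producing via \cref{lem:move_to_middle} and \cref{lem:shift_hom} a strictly better choice $(C\cup\{e\},D)$, noting $(D,C\cup\{e\})\in P$ would, by the profile condition applied to $(C,D)$ and $(C\cup\{e\},D)$ with common upper bound $(C\cup\{e\},D)$, again force a cosmall separation into $P$ --- goes through verbatim.

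It then remains to check that $P_X\cap\vS_k(X)$ inherits the two profile properties. For the "no two separations pointing away" property: if $(A_1,B_1)\le(A_2,B_2)$ with both $(B_1,A_1)$ and $(A_2,B_2)$ in $P_X\cap\vS_k(X)$, then (arguing as for the tangle property in \cref{prop:edges_to_vtx}, using that $P_X\cap\vS_k(X)$ orients each $(A_i,B_i)^E$ in the corresponding direction, since the opposite shift would reintroduce the wrong orientation) $P$ would contain $(B_1,A_1)^E=(E(B_1),E(A_1))$ and $(E(A_2),E(B_2))$; but $E(A_1)\sub E(A_2)$ and $E(B_1)\supseteq E(B_2)$, so these are two separations of $E$ pointing away from each other inside $P$, contradicting that $P$ is a profile. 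For the "no triple $\{(A_1,B_1),(A_2,B_2),(B_1\cap B_2,A_1\cup A_2)\}$" property: again pulling back along $(\cdot)^E$, one checks $(E(A_1),E(B_1))$, $(E(A_2),E(B_2))$, $(E(B_1\cap B_2),E(A_1\cup A_2))$ all lie in $P$, and that $E(B_1\cap B_2)\sub E(B_1)\cap E(B_2)$ while $E(A_1)\cup E(A_2)\sub E(A_1\cup A_2)$, so this triple lies below a triple of the forbidden profile shape for $P$ (using that $P$ is a profile to pass to the actual forbidden triple, or directly that $P$ cannot orient $(E(B_1)\cap E(B_2), E(A_1)\cup E(A_2))$ the offending way). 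Regularity of $P_X\cap\vS_k(X)$ is immediate: a cosmall $(X,B)$ would, via $(\cdot)^E$, force the cosmall $(E,E(B))$ into $P$. I expect the orientation step, and specifically finding the right profile-theoretic replacement for the use of \eqref{prop:Etangle}, to be the only nonroutine part; everything else is bookkeeping along the lines already laid out in \cref{prop:edges_to_vtx}.
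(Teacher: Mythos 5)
The paper itself defers the proof of this proposition to its extended version, so I can only judge your argument on its own terms. Your overall strategy --- rerun the proof of \cref{prop:edges_to_vtx} and replace each use of \eqref{prop:Etangle} by the profile conditions plus regularity --- is the right one, and you correctly identify the crux: finding a profile-theoretic reason why $C_1\cup C\ne E$. But your argument for that step is wrong. You claim that $C_1\cup C=E$ forces $D_1\cap D\subseteq E\sm(C_1\cup C)=\es$. That holds only for \emph{partitions} of $E$; here $(C_1,D_1)$ and $(C,D)$ are set separations, whose sides overlap (indeed $C_1\cap D_1=E(A)\cap E(B)$ is typically nonempty). All one can say is $D_1\cap D\subseteq(C_1\cap D_1)\cup(C\cap D)$, so the forbidden corner is $(D_1\cap D,\,C_1\cup C)=(D_1\cap D,E)$ with $\abs{D_1\cap D}$ possibly as large as (just under) $4k$. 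This matters because the profile condition only forbids the triple when its third member actually lies in the domain of $P$: since $\abs{E,D'}_E=\abs{D'}$, the separation $\{E,D_1\cap D\}$ may have order in $[2k,4k)$, in which case $P$ does not orient it at all, and neither the profile condition nor regularity produces a contradiction. Your false claim $D_1\cap D=\es$ is exactly what made the corner's order small enough to be oriented, so it is load-bearing; without it the key step does not close, and it is not clear that the constant $2k$ survives your route.

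Two smaller points in the same vein. First, to show $(C\cup\{e\},D)\in P$ you should invoke the \emph{consistency} half of the profile definition, not the triple condition: since $(C,D)\le(C\cup\{e\},D)$, the separations $(C,D)$ and $(D,C\cup\{e\})$ point away from each other, so $P$ cannot contain both; that works cleanly, whereas the corner $(E,(C\cap D)\cup\{e\})$ of the triple condition again need not have order $<2k$. Second, in inheriting the triple condition you want to pass from $(E(B_1\cap B_2),E(A_1\cup A_2))\in P$ to the genuine corner $(E(B_1)\cap E(B_2),E(A_1)\cup E(A_2))$; submodularity only guarantees that \emph{one} of the two corners of $(E(A_1),E(B_1))$ and $(E(A_2),E(B_2))$ has order $<2k$, not necessarily this one, so that step also needs an argument rather than the parenthetical you offer. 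The orientation and regularity parts that do not touch these issues are fine.
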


\begin{PROP}\label{prop:vtx_to_edges_profiles}
Let $P$ be a regular profile of $\vS_{3k}(X)$ and let us define ${P_E := \{(C,D) \in \vS_{3k}(E)  \::\: (C,D)^\blacktriangleleft \in P\}}$.
Then $P_E\cap \vS_k(E)$ is a regular profile of $\vS_k(E)$.
\end{PROP}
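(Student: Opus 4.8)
The plan is to mimic the proof of \cref{prop:vtx_to_edges} almost verbatim, replacing the tangle axiom~\eqref{prop:Etangle} by the two clauses in the definition of a profile together with regularity, and leaning throughout on the monotonicity of $(\cdot)^\blacktriangleleft$ from \cref{lem:shift_hom}. Write $P':=P_E\cap\vS_k(E)$. That $P'$ is an orientation of $\vS_k(E)$ is immediate from \cref{lem:set_edge_decrease}: for $(C,D)\in\vS_k(E)$ we have $\abs{(C,D)^\blacktriangleleft}_X\le\abs{C,D}_E<k<3k$, so $P$ orients $(C,D)^\blacktriangleleft$, and since $(\cdot)^\blacktriangleleft$ commutes with the involution exactly one of $(C,D),(D,C)$ lies in $P'$. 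Regularity of $P'$ is equally quick: if $(E,D)$ is cosmall then every vertex has all of its incident edges in $E$, so the first side of $(E,D)^\blacktriangleleft$ is all of $X$, making $(E,D)^\blacktriangleleft$ cosmall and hence not a member of the regular profile $P$; thus $(E,D)\notin P_E$. For the first profile clause, suppose $(C_1,D_1)\le(C_2,D_2)$ are distinct with $(D_1,C_1),(C_2,D_2)\in P'$; setting $(A_i,B_i):=(C_i,D_i)^\blacktriangleleft$, \cref{lem:shift_hom} gives $(A_1,B_1)\le(A_2,B_2)$, while $(B_1,A_1),(A_2,B_2)\in P$. If $\{A_1,B_1\}\ne\{A_2,B_2\}$ this is a pair of separations of $P$ pointing away from each other, contradicting that $P$ is a profile; if they coincide, the order relation forces one of $(A_1,B_1),(A_2,B_2)$ to be cosmall, contradicting regularity of $P$.

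The substantial point is the second profile clause: that $P'$ contains no triple $(C_1,D_1),(C_2,D_2),(D_1\cap D_2,C_1\cup C_2)$. Since the first sides of these three separations of $E$ cover $E$, this is a restricted instance of the covering triples treated in \cref{prop:vtx_to_edges}, and I would adapt that proof. Put $(A_i,B_i):=(C_i,D_i)^\blacktriangleleft\in P$ for $i=1,2$ and $(A_3,B_3):=(D_1\cap D_2,C_1\cup C_2)^\blacktriangleleft\in P$; since $(C_i,D_i)\le(C_1\cup C_2,D_1\cap D_2)$, \cref{lem:shift_hom} yields $A_1\cup A_2\subseteq B_3$ and $A_3\subseteq B_1\cap B_2$, i.e.\ $(A_3,B_3)\le(B_1\cap B_2,A_1\cup A_2)$. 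If $P$ already fails its second clause on the pair $(A_1,B_1),(A_2,B_2)$ we are done; otherwise $P$ contains $(A_1\cup A_2,B_1\cap B_2)$, which by \cref{lem:order_submodular} has order $<2k$. The catch --- absent for tangles --- is that $(A_3,B_3)$ need not equal $(B_1\cap B_2,A_1\cup A_2)$ but only lies below it, so the clause for $P$ does not apply to the triple on the nose. I see two ways to bridge this, mirroring the devices in \cref{prop:vtx_to_edges}: either take the infimum of $(A_1\cup A_2,B_1\cap B_2)$ and $(A_3,B_3)$ --- a separation whose second side is all of $X$, so whose inverse is cosmall and, by submodularity, of order $<3k$, hence forced into $P$ by regularity --- and use it together with $(A_1,B_1),(A_2,B_2)$ to exhibit a forbidden triple in $P$; or first normalise $(D_1\cap D_2,C_1\cup C_2)$ within its fibre and then push edges into the middle via \cref{lem:move_over,lem:move_to_middle,lem:minimal_induced} to obtain, without raising the order, a separation of $E$ in $P_E$ whose shift is exactly $(B_1\cap B_2,A_1\cup A_2)$, at which point the clause for $P$ applies verbatim. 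Either route is where the bound $3k$ on $X$ is consumed.

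Accordingly, the main obstacle is this last reconciliation. For tangles one simply enlarges the third separation and invokes~\eqref{prop:Etangle}, which forbids every covering triple; a profile forbids only covering triples of the special join shape $(A_1,B_1),(A_2,B_2),(B_1\cap B_2,A_1\cup A_2)$, so one must manufacture a witness of exactly that shape inside $P$, while $(D_1\cap D_2,C_1\cup C_2)$ hands us only a possibly proper lower bound of it. Carrying out the cosmall-infimum argument, or the local moves of \cref{lem:move_over,lem:move_to_middle,lem:minimal_induced}, is the one genuinely non-routine step; everything else is bookkeeping with \cref{lem:shift_hom}, \cref{lem:set_edge_decrease} and submodularity.
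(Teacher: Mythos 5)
Your treatment of the routine parts is sound: the orientation property via \cref{lem:set_edge_decrease}, regularity via the cosmallness of $(E,D)^\blacktriangleleft$, and consistency via \cref{lem:shift_hom} (the sub-case where the two shifted separations coincide as oriented separations is handled by $P$ being an orientation rather than by cosmallness, but that is trivial). Note that the paper itself gives no proof of this proposition, deferring to the extended version, so I can only assess your argument on its own terms. You correctly isolate the real difficulty -- that $(A_3,B_3):=(D_1\cap D_2,C_1\cup C_2)^\blacktriangleleft$ may lie strictly below $(B_1\cap B_2,A_1\cup A_2)$ -- but neither of your two proposed bridges closes it, so the key step remains a genuine gap.

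The cosmall-infimum route fails because the infimum $\bigl((A_1\cup A_2)\cap A_3,\,X\bigr)$ is a \emph{small} separation: every regular profile contains every small separation it orients, so forcing it into $P$ carries no information, and it cannot form a forbidden triple with $(A_1,B_1)$ and $(A_2,B_2)$ because the profile axiom only forbids triples whose third member is the \emph{inverse of the supremum} of the first two, which $((A_1\cup A_2)\cap A_3,X)$ is not. The local-moves route fails for a structural reason: membership of a separation of $E$ in $P_E$ is by definition equivalent to its shift lying in $P$, and profiles are closed \emph{downwards} under $\le$ (by consistency) but not upwards; so from $(A_3,B_3)\in P$ and $(A_3,B_3)\le(B_1\cap B_2,A_1\cup A_2)$ you cannot conclude $(B_1\cap B_2,A_1\cup A_2)\in P$ -- indeed the profile property forces the \emph{opposite} orientation $(A_1\cup A_2,B_1\cap B_2)$ into $P$, so any separation of $E$ shifting exactly to $(B_1\cap B_2,A_1\cup A_2)$ is definitively \emph{not} in $P_E$, and \cref{lem:move_to_middle} only ever increases the shift anyway. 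What is actually needed is the analogue of the $Z$-argument from \cref{prop:vtx_to_edges}: set $Z=X\sm(A_1\cup A_2\cup A_3)$; if $Z=\emptyset$ then the supremum $(A_1\cup A_2,B_1\cap B_2)\vee(A_3,B_3)$ is cosmall of order $<3k$ and the profile property forces it into $P$, contradicting regularity; if $Z\neq\emptyset$ one shows $\abs{Z,X}_X<3k$ as in that proof, puts $(Z,X)\in P$ by regularity, and builds up suprema towards a cosmall separation -- and it is precisely the order bookkeeping in this second case that consumes the bound $3k$ and still requires care. As it stands, your proposal identifies the obstacle but does not overcome it.
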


However, since the arguments are similar to those in \cref{subsec:tangles_edges} we refer interested readers to the extended version of this paper.

Another possible variation of the problem is to consider other ways to relate tangles of the different systems to each other. Given our shifting operation between the two separation systems $\vS(X)$ and $\vS(Y)$ we defined a `pull-back' type operation that maps subsets of $\vS(X)$ to subsets of $\vS(Y)$ and investigated its action on tangles. However, as in the definition of $\tau_X$ there is another way to extend our shifting operations from acting on single separations to acting on subsets via a `push-forward' type action. It is perhaps equally natural to ask how the tangles of $\vS_k(X)$ and $\vS_k(Y)$ behave under these operations. 

Given a tangle $\tau$ of $\vS_k(X)$ one may define the set \[
    \tau^\triangleright := \{(A,B)^\triangleright\::\: (A,B) \in \tau\} \subseteq \vS_k(Y),
\] and similarly, if $\tau$ is a tangle of $\vS_k(Y)$, we may define  
\[
    \tau^\triangleleft := \{(A,B)^\triangleleft\::\: (A,B) \in \tau\} \subseteq \vS_k(X).
\]
Note that $\tau^\triangleright$ and $\tau^\triangleleft$, generally, are no more than subsets of $\vS_k(Y)$ or $\vS_k(X)$, respectively, they need not be an orientation, not even a partial orientation.

However, we can show that this push-forward $\tau^\triangleright$ is, when restricted appropriately, contained in a corresponding pull-back $\pull \tau$ and thus needs to be a partial orientation satisfying \eqref{prop:Xtangle}.

\begin{PROP}
   Let $\tau$ be a tangle of $\vS_{16k}(X)$, then \[
        (\tau \cap \vS_k(X))^\triangleright \subseteq \pull \tau.
   \]
\end{PROP}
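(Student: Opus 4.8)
I would fix a separation $(A,B)\in\tau\cap\vS_k(X)$ and aim to show $((A,B)^\triangleright)^\triangleleft\in\tau$, which is by definition the statement that $(A,B)^\triangleright\in\pull\tau$. The one new structural point I would isolate is that the vertex-to-vertex shift $(\cdot)^\triangleright$ factors through the edge shifts. Indeed, for $y\in Y$ an edge incident with $y$ lies in $E(A)$ if and only if its $X$-endpoint lies in $A$, so $\abs{E(y)\cap E(A)}=\abs{N(y)\cap A}$ and similarly for $B$; comparing these for each $y$ gives $((A,B)^E)^\blacktriangleright=(A,B)^\triangleright$. Dually, for $x\in X$ every edge incident with $x$ lies in $E(A)$ exactly when $x\in A$, and — as in \cref{subsec:tangles_edges}, where $G$ is assumed connected, so that $X$ has no isolated vertex — this yields $((A,B)^E)^\blacktriangleleft=(A,B)$. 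Both identities are immediate from the definitions.

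Granting these, the rest is short. Since $(A,B)\neq(X,X)$ by the standing convention on $\vS_k(X)$, connectedness gives $(A,B)^E\neq(E,E)$, and by \cref{lem:order_shift} we have $\abs{(A,B)^E}_E\le 2\abs{A,B}_X<2k$; hence $(A,B)^E\in\vS_{2k}(E)$. Moreover $((A,B)^E)^\blacktriangleleft=(A,B)\in\tau$, so $(A,B)^E\in\tau_E$. Thus $(A,B)^E\in\tau_E\cap\vS_{2k}(E)$, and therefore $((A,B)^E)^\blacktriangleright\in\bigl(\tau_E\cap\vS_{2k}(E)\bigr)_Y$. By the first identity this element is $(A,B)^\triangleright$, and by \cref{lem:set_fixed_shift_order} it has order $\abs{(A,B)^\triangleright}_Y\le\abs{A,B}_X<k$, so it lies in $\vS_k(Y)$.

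To conclude I would invoke the identity proved inside \cref{thm:shifttangle_weaker}. Our $\tau$ is a tangle of $\vS_{16k}(X)$, hence in particular a tangle of $\vS_{8k}(X)$; and because shifting never raises order (\cref{lem:set_fixed_shift_order}, \cref{lem:set_edge_decrease}), the sets $\pull\tau\cap\vS_k(Y)$ and $\tau_E\cap\vS_{2k}(E)$ depend only on how $\tau$ orients $\vS_{8k}(X)$, so that proof applies and gives
\[
  \pull\tau\cap\vS_k(Y)=\bigl(\tau_E\cap\vS_{2k}(E)\bigr)_Y\cap\vS_k(Y).
\]
By the previous paragraph $(A,B)^\triangleright$ lies in the right-hand side, hence $(A,B)^\triangleright\in\pull\tau$, which is what we wanted.

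I expect the main obstacle to be the ``$\supseteq$'' inclusion of this displayed identity — the assertion that every separation $(P,Q)^\blacktriangleright$ with $(P,Q)\in\tau_E\cap\vS_{2k}(E)$ of order $<k$ lies in $\pull\tau$. This is where the tangle hypothesis genuinely enters (via \cref{prop:vtx_to_edges} and the $Y$-analogue of \cref{prop:edges_to_vtx}), and it is the step one would have to reproduce for a self-contained argument rather than quoting \cref{thm:shifttangle_weaker}; everything else above is bookkeeping with the definitions and with the order bounds of \cref{lem:order_shift}, \cref{lem:set_fixed_shift_order} and \cref{lem:set_edge_decrease}.
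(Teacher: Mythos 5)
Your proof is correct, but it takes a genuinely different route from the paper's. The paper argues by contradiction in two lines: if $(A,B)^\triangleright=(D,C)$ failed to lie in $\pull\tau$, then $(C,D)$ would lie in it (since $(D,C)^\triangleleft$ has order $<k$ and is therefore oriented by $\tau$), while \cref{thm:double_shift_set} forces $(A,B)\in\pullright{(\pull\tau\cap\vS_{4k}(Y))}$ and hence $(D,C)\in\pull\tau\cap\vS_{4k}(Y)$ as well, contradicting that the latter is an orientation. So the paper's argument leans entirely on the idempotence theorem with the $16k\to 4k\to k$ constants, whose direct proof is deferred to the extended version. You instead give a direct membership proof routed through the edge machinery of \cref{subsec:tangles_edges}: the factorisation $(A,B)^\triangleright=((A,B)^E)^\blacktriangleright$ together with the identity $\pull\tau\cap\vS_k(Y)=(\tau_E\cap\vS_{2k}(E))_Y\cap\vS_k(Y)$ asserted inside the proof of \cref{thm:shifttangle_weaker}. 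Your diagnosis of where the substance lies is accurate: the ``$\subseteq$'' half of that identity is the bookkeeping map $(C,D)\mapsto(C,D)^E$, while the ``$\supseteq$'' half is forced only because both sides are orientations of $\vS_k(Y)$, which is supplied by \cref{prop:vtx_to_edges} and the $Y$-analogue of \cref{prop:edges_to_vtx}. What your route buys is that it uses only results actually proved in this paper, and it needs $\tau$ to be a tangle merely of $\vS_{8k}(X)$ rather than $\vS_{16k}(X)$, so it yields a marginally stronger statement; what it costs is length and the (harmless) importation of the $(\cdot)^\blacktriangleright$ notation. One small point you share with the paper rather than improve on: both arguments tacitly assume $(A,B)^\triangleright\neq(Y,Y)$ so that it can belong to $\vS_k(Y)$ at all; this is vacuously satisfied here, since $(A,B)^\triangleright=(Y,Y)$ would force $\abs{A,B}_X\ge\abs{E}/4$ while the existence of any separation in a tangle requires its complementary co-small shifts to have order at least $16k$, but neither proof says so.
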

\begin{proof}
The only way in which this may fail is that for some $(C,D) \in \pull \tau$ we have $(D,C) \in (\tau \cap \vS_k(X))^\triangleright$.
Let us say this happens because of some ${(A,B) \in  \tau \cap \vS_k(X)}$ with $(A,B)^\triangleright = (D,C)$.

Then also $(A,B) \in \pullright{(\pull\tau\cap \vS_{4k}(Y))}\cap \vS_k(X)$ by \cref{thm:double_shift_set}, and hence $(A,B)^\triangleleft = (D,C) \in  \pull\tau\cap \vS_{4k}(Y)$, contradicting the fact that $\pull\tau \cap \vS_{4k}(Y)$ is a tangle.
\end{proof}

A third variation of this idea is motivated by applications, see for example \cite{TangleClusteringWeakStrong}. There we often wish to work with systems of set partitions, rather than more general set separations. Again here much of the work in previous sections remains true in this setting, with slight tweaks to the definitions and results.

More explicitly, given as before a bipartite graph $G$ on partition classes $X$ and $Y\!$ let $\vcB(X)$ and $\vcB(Y)$ be the universe of all the partitions of $X$ and~$Y\!$, respectively.

Given a partition $(A,B)$ of $X$, we can define, as before, the shift of $(A,B)$ to be the partition $(C,D)$ of $Y\!$ where $C$ is the set of all elements of $Y\!$ with more neighbours in $A$ than in $B$ and $D$ is the set of all elements of $Y\!$ with more neighbours in $B$ than in $A$. However, a small issue arises here as to what to do with those vertices which have an equal number of neighbours in $A$ and $B$. Since we need the shift of a partition to be a partition we need to break the symmetry in some way here and we define our shifting operation not for unoriented, but for oriented partitions, namely we define a partition $(A,B)^\triangleright:=(C,D)$ of $Y\!$ by letting \[C:=\{y\in Y \::\: |N(y)\cap A|\ge|N(y)\cap B|\}\] and \[D:=\{y\in Y \::\: |N(y)\cap A|<|N(y)\cap B|\}.\]
In particular, in general this operation may not commute with the involutions on $\vcB(X)$ and $\vcB(Y)$, i.e., it may be the case that $(A,B)^\triangleright\neq ((B,A)^\triangleright)^\ast$. 

There is again a natural order function for these partitions given by 
\[
\abs{A,B}_X \coloneqq \sum_{y\in Y}\min\{\abs{N(y)\cap A},\abs{N(y)\cap B}\},
\] 
which can again be seen to be submodular, and for a suitable definition of a tangle we can show that analogues of Theorems \ref{thm:shifttangle_set} and \ref{thm:double_shift_set} hold for tangles of $\vcB(X)$ and $\vcB(Y)$. In particular the following is true:

\begin{restatable}{THM}{thmshiftbip}\label{thm:shift_bipart}
Let $\tau$ be a tangle of $\vcB_{4k}(X)$, then $\tau' := \pull\tau\cap \vcB_k(Y)$ is a tangle of $\vcB_k(Y)$.
\end{restatable}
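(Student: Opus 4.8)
The plan is to transcribe the proof of \cref{thm:shifttangle_set} — or, more transparently, the two-step proof via edge-partitions mirroring \cref{prop:edges_to_vtx,prop:vtx_to_edges} — into the partition setting. For a bipartite $G$ no edge runs inside $X$ or inside $Y$, so $(A,B)\mapsto(E(A),E(B))$ sends a partition of $X$ to a partition of $E$, and a short computation gives $|E(A),E(B)|_E=|A,B|_X$; hence the factorisation $\vcB(X)\to\vcB(E)\to\vcB(Y)$ remains available. Before anything else I would record the structural facts the argument rests on: submodularity of the partition order function (the proof of \cref{lem:order_submodular} carries over, with the $N(y)\cap A\cap B$ correction term simply absent), and the partition analogues of \cref{lem:set_fixed_shift_order,lem:set_edge_decrease} — that a shift never increases the order — which need the local-move lemmas \cref{lem:move_over,lem:minimal_induced} re-run with ties broken towards the designated side.

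Granting these, I would show $\tau':=\pull\tau\cap\vcB_k(Y)$ is an orientation of $\vcB_k(Y)$ as follows. Given a partition $(C,D)$ of $Y$ with $|C,D|_Y<k$, write $(C,D)^\triangleleft=(P\cup T,Q)$ and $(D,C)^\triangleleft=(Q\cup T,P)$, where $P$ and $Q$ are the sets of $x\in X$ with a strict preference for $C$, resp.\ $D$, and $T$ is the set of tied vertices. Both shifts have order at most $|C,D|_Y<k<4k$, so $\tau$ orients each; since the left sides $P\cup T$ and $Q\cup T$ cover $X$, \eqref{prop:Xtangle} forbids both from lying in $\tau$, so at most one of $(C,D),(D,C)$ lies in $\tau'$. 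For at least one, I would exploit that $\sum_{x\in T}|N(x)|\le 2|C,D|_Y<2k$ — each tied vertex contributes half its degree to the order — to bound the orders of the auxiliary partitions cut off by $T$, and then, assuming $\tau$ contained neither shift (hence oriented both reversed shifts $(Q,P\cup T)$ and $(P,Q\cup T)$), derive a contradiction from \eqref{prop:Xtangle} together with the closure of tangles under $\vee$ and $\wedge$, which is itself a direct consequence of \eqref{prop:Xtangle}.

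That $\tau'$ satisfies the tangle property then parallels \cref{prop:vtx_to_edges}. If $(C_1,D_1),(C_2,D_2),(C_3,D_3)\in\tau'$ violated it, with $C_1\cup C_2\cup C_3=Y$, I would put $(A_i,B_i):=(C_i,D_i)^\triangleleft\in\tau$; then $Z:=X\setminus(A_1\cup A_2\cup A_3)=B_1\cap B_2\cap B_3$ is nonempty (else $\tau$ itself violates \eqref{prop:Xtangle}), and rearranging $\sum_i|C_i,D_i|_Y$ using $C_1\cup C_2\cup C_3=Y$ yields $\sum_{z\in Z}|N(z)|<3k$, whence $|Z,X\setminus Z|_X<3k$. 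By submodularity the partition $(A_1\cup Z,B_1\setminus Z)$ has order $<4k$, and its left side together with $A_2$ and $A_3$ covers $X$; provided $\tau$ orients it towards $B_1\setminus Z$, this contradicts \eqref{prop:Xtangle}.

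The main obstacle is precisely the tie-breaking asymmetry: unlike the general-separation shift, $(\cdot)^\triangleleft$ does not commute with the involution, so the two places above that are immediate in the set-separation proof — that $\tau$ orients exactly one of the two shifts, and that $\tau$ orients $(A_1\cup Z,B_1\setminus Z)$ towards $B_1\setminus Z$ — now require work. In \cref{prop:vtx_to_edges} the latter is free because one works with the separation $(Z,X)$, whose reverse $(X,Z)$ has full left side and so is excluded outright by \eqref{prop:Xtangle}; for partitions $(Z,X\setminus Z)$ enjoys no such property, so one must carry along an extra separation accounting for $Z$ (respectively for the tie class $T$ in the orientation step) whose order is controlled via the order function, and this is what forces the factor $4$ rather than $2$. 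I expect the handling of these indifferent and tied vertices to be the only genuinely new ingredient; the remainder is a routine adaptation of \cref{subsec:tangles_edges}.
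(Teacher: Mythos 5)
You have correctly located the two places where the set-separation argument breaks down for partitions (the orientation step for tied vertices, and the orientation of the partition cut off by $Z$), but your proposed repair does not work, and this is a genuine gap rather than a routine verification. Consider the orientation $\tau$ of $\vcB_{4k}(X)$ that orients every partition towards the side containing a fixed vertex $x_0$: this satisfies \eqref{prop:Xtangle} (no union of three "away" sides can contain $x_0$) and is closed under $\vee$ and $\wedge$. Now take $(C,D)\in\vcB_k(Y)$ for which $x_0$ is tied, i.e.\ $x_0\in T$. Then $x_0$ lies in the left side $P\cup T$ of $(C,D)^\triangleleft$ and in the left side $Q\cup T$ of $(D,C)^\triangleleft$, so $\tau$ contains both reversed shifts $(Q,P\cup T)$ and $(P,Q\cup T)$, and neither $(C,D)$ nor $(D,C)$ lies in $\pull\tau$. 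Your plan to derive a contradiction from \eqref{prop:Xtangle} and closure under $\vee,\wedge$ therefore cannot succeed: the join of the two reversed shifts is $(X\sm T,T)$, which is perfectly consistent with \eqref{prop:Xtangle} — it just says the tangle "lives inside" the tie-set $T$. The same obstruction hits your second flagged step: nothing in \eqref{prop:Xtangle} forces $(Z,X\sm Z)$ rather than $(X\sm Z,Z)$ into $\tau$, because for partitions the reverse of $(Z,X\sm Z)$ does not have full left side, unlike the separation $(X,Z)$ used in the proof of \cref{prop:vtx_to_edges}.

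What is missing is precisely the point made at the end of \cref{sec:generalisations}: for partitions the consistency/regularity condition defining a tangle must be strengthened beyond \eqref{prop:Xtangle}, so as to exclude orientations concentrated on a set $W$ whose total degree $\sum_{w\in W}\abs{N(w)}$ is small relative to the order of the tangle (your bounds $\sum_{x\in T}\abs{N(x)}<2k$ and $\sum_{z\in Z}\abs{N(z)}<3k$ are exactly the quantities such a condition must control, which is where the factor $4$ comes from). Without stating and invoking that extra condition, the "at least one" half of the orientation argument and the final contradiction in the tangle-property argument are both false as claimed, not merely unproved. The rest of your outline — submodularity of the partition order function, the inequality $\abs{(C,D)^\triangleleft}_X\le\abs{C,D}_Y$, the use of \eqref{prop:Xtangle} to exclude both shifts lying in $\tau$, and the degree-counting over $Z$ — is sound.
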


\begin{restatable}{THM}{thmbipdouble}\label{thm:double_shift_bipart}
    Let $\tau$ be a tangle of $\vcB_{16k}(X)$, let $\tau' = \pull\tau \cap \vcB_{4k}(Y)$, and let $\tau'' = \pullright\tau'\cap \vcB_k(X)$, then $\tau'' \subseteq \tau$.
\end{restatable}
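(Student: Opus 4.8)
The plan is to follow, essentially step for step, the direct proof of \cref{thm:double_shift_set} given in the extended version~\cite{ASSdualityArxiv}, whose constants $16k,4k,k$ are exactly those requested here; the edge-tangle detour behind \cref{thm:double_shift_weaker} would cost an extra bounded factor. First I record the partition analogues of \cref{lem:order_submodular,lem:set_fixed_shift_order,lem:set_shift_order_fn}, which all carry over, in fact more easily than in the general case, since the correction terms $-|N(\cdot)\cap A\cap B|/2$ drop out: the order function $|A,B|_X:=\sum_{y\in Y}\min\{|N(y)\cap A|,|N(y)\cap B|\}$ and its analogue $|\cdot|_Y$ are submodular; one has the clean identity $|A,B|_X=|E(C,B)|+|E(D,A)|$ for $(C,D):=(A,B)^\triangleright$; and bounding each $\min$ in $|C,D|_Y$ by a single summand gives $|C,D|_Y\le|E(A,D)|+|E(B,C)|=|A,B|_X$, so shifting never increases order. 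By \cref{thm:shift_bipart}, applied first with $4k$ in place of $k$ and then once more with $X$ and $Y$ interchanged, $\tau'$ is a tangle of $\vcB_{4k}(Y)$ and $\tau''$ a tangle of $\vcB_k(X)$. Thus it suffices to show that $(A,B)\in\tau''$ implies $(A,B)\in\tau$.

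Unwinding the definitions, $(A,B)\in\tau''$ means exactly that $|A,B|_X<k$ and $(A^+,B^+):=((A,B)^\triangleright)^\triangleleft\in\tau$, and then $|A^+,B^+|_X\le|(A,B)^\triangleright|_Y\le|A,B|_X<k$. Suppose, for a contradiction, that $(B,A)\in\tau$ instead. The technical core is to bound how far the double shift can move $(A,B)$: using the edge identity for $|\cdot|_X$, every vertex of the set $A\sm A^+$ of vertices of $A$ that the double shift sends to the other side is non-isolated and meets at least as many edges of $D$ as of $C$, and all of these are counted in $|A,B|_X$; this yields both $|A\sm A^+|<k$ and that the total degree of $A\sm A^+$ is less than $2k$ (one has to check here how isolated vertices behave under the tie-breaking conventions in $(\cdot)^\triangleright$ and $(\cdot)^\triangleleft$). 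Consequently the join $(A\cup A^+,B\cap B^+)=(A,B)\vee(A^+,B^+)$ has order $<2k$ by submodularity, and the partition of $X$ splitting off $A\sm A^+$ has order less than $2k$; both lie in $\vcB_{16k}(X)$ and are oriented by $\tau$. Now $\tau$ cannot orient the join towards $A\cup A^+$, as $(A,B)$ lies below it and $(B,A)\in\tau$ while $\tau$ is a profile; and if $\tau$ oriented the split-off partition towards $A\sm A^+$, the triple $(B,A),(A^+,B^+),(A\sm A^+,X\sm(A\sm A^+))$ would have first parts covering $X$, contradicting the tangle property. Ruling out the one remaining joint orientation is the step that fails for the bare analogue of \eqref{prop:Xtangle} — single-vertex orientations would then be spurious counterexamples — and is precisely what the more restrictive ``suitable'' definition of a tangle of $\vcB_k(X)$, spelled out in~\cite{ASSdualityArxiv}, is designed to exclude.

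I expect the two real obstacles to be exactly these: the bound on $A\sm A^+$, which rests on the explicit edge representation of $|\cdot|_X$ and a careful treatment of isolated vertices; and closing the final case, which depends on the precise form of the $\vcB$-tangle definition. As an independent check, and as a fallback, one can instead run everything through edge tangles — defining $\abs{\cdot}_E$ on partitions of~$E$, proving the partition analogues of \cref{prop:edges_to_vtx,prop:vtx_to_edges,cor:double_shift}, and composing them — which reproves the statement with constants worse only by a bounded factor.
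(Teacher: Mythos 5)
The paper itself contains no proof of Theorem~\ref{thm:double_shift_bipart}: it explicitly defers the partition case to the extended version, so there is no in-paper argument to compare yours against. Judged on its own terms, your preparatory work is correct: for partitions the correction terms indeed vanish, the identity $\abs{A,B}_X=\abs{E(C,B)}+\abs{E(D,A)}$ for $(C,D)=(A,B)^\triangleright$ holds, shifting does not increase order, and the unwinding of $(A,B)\in\tau''$ into $\abs{A,B}_X<k$ and $(A^+,B^+)\in\tau$ is right. Your key estimate is also sound: every $x\in A\sm A^+$ has strictly more neighbours in $D$ than in $C$, so $\sum_{x\in A\sm A^+}\abs{N(x)\cap D}\le\abs{E(D,A)}\le\abs{A,B}_X<k$, giving $\abs{A\sm A^+}<k$ and $\sum_{x\in A\sm A^+}d(x)<2k$, and the two triples you exhibit correctly dispose of two of the possible joint orientations of the two auxiliary partitions.

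The genuine gap is the remaining case, and it is not a matter of bookkeeping. Writing $Z=A\sm A^+$, the surviving configuration is $(B,A),(A^+,B^+),(X\sm Z,Z)\in\tau$. But $(X\sm Z,Z)=(B\cup A^+,\,A\cap B^+)$ is precisely the join of $(B,A)$ and $(A^+,B^+)$, so this is exactly the orientation that any consistent orientation or profile \emph{must} choose; no appeal to \eqref{prop:Xtangle} or to profile consistency can exclude it. In the set-separation version this case is free, because there the relevant separation is $(X,A\sm A^+)$, which is cosmall and killed by \eqref{prop:Xtangle} itself; for partitions one needs a regularity condition strong enough to forbid $\tau$ from pointing towards a set $Z$ with $\abs{Z}<k$ and $\sum_{z\in Z}d(z)<2k$. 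The mild condition hinted at in Section~\ref{sec:generalisations} (big side with at least two vertices and neighbourhood meeting $Y$ in at least two vertices) does not do this — such a $Z$ can easily satisfy it — and without the calibrated condition the statement is in danger from focused orientations at low-degree vertices, which do satisfy the bare analogue of \eqref{prop:Xtangle}. You correctly locate the difficulty, but asserting that an unstated ``suitable'' definition ``is designed to exclude'' it is not a proof of the excluding step; to complete the argument you must state the partition-tangle definition and verify that it rules out $(X\sm Z,Z)\in\tau$ for such $Z$. A smaller point: your appeal to ``$\tau$ is a profile'' should instead invoke \eqref{prop:Xtangle} with a repeated separation, e.g.\ the triple $(B,A),(B,A),(A\cup A^+,B\cap B^+)$.
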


Again the arguments closely follow the proofs of Theorems \ref{thm:shifttangle_set} and \ref{thm:double_shift_set}, so we refer an interested reader to the extended version of this paper.

It would be nice if one could find a unified result implying these different variations. Unfortunately, it seems that the nature of the result means that strengthening or weakening the notion of tangle we consider does not make the statement stronger or weaker, but rather incomparable. Indeed, since we wish to show that tangles on $X$ shift to tangles on $Y\!$, if we consider a stronger notion of tangle, then fewer orientations are tangles, and so it is required to show that a stronger property holds for the shifts, but under a stronger assumption on the original orientations. Similarly, if we consider a weaker notion of tangles, then more orientations will be tangles, and so it is required to show that a weaker property holds for the shifts, but we only have weaker assumptions on the original orientations.

A similar problem arises if one wants to relate the statements for set partitions to the statements about set separations: In principle, every tangle of separations of a set induces a tangle of its partitions. And conversely, every tangle of partitions of a set induces a tangle of separations of that set of lower order, except that the `regularity' conditions of these two types of tangles are not compatible: For set separations we just require that we do not contain any cosmall separations, whereas for partitions we want more, namely that the big side of our partition of the vertices in $X$ not only contains more than one vertex, but also its neighbourhood contains at least two vertices from $Y\!$. 
Thus the statements for these two types of tangles are, formally, independent from each other, although most of the proof strategy is very similar.

\section{A homological view of duality}\label{Hom}

Our aim in this section is to point out that the duality of set partitions outlined in Section~\ref{DualityNaive} can be viewed naturally as a case of algebraic duality. The algebraic approach leads to some immediate questions, whose interpretations for set partitions illuminates these from a new angle.

Although motivated by the naive duality of set partitions as in Section~\ref{DualityNaive}, we shall set up our algebraic framework for the slightly larger class of set separations, in line with the rest of this paper. The homology of set separations is tantamount to that of hypergraphs~\cite{HypergraphHomology}, to which we refer for algebraic details.

In this section, $S$~is always a set of {\em oriented\/} separations of some fixed finite set~$V\!$.%
  \footnote{We shall never consider unoriented separations in a homological context such as here, and need to fix default orientations anyhow, as in Section~\ref{DualityNaive}. So we may as well drop the arrows.}
  We assume that $S$ is antisymmetric: that no two elements of~$S$ are inverses of each other as oriented separations. The reason for this restriction is not that we cannot deal with pairs of inverse separations, but that we do not want the inverse of a separation~$s$ to carry an independent name that cannot be traced back to~$s$. If necessary, we can always refer to the inverse of~$s$ as~$s^*$, but in our algebraic context this will not normally be necessary.%
  \footnote{Elements of~$S$ will occur as 1-chains, and thus have a natural inverse in this chain group. If desired, one may think of the chain~$-s$ as the separation inverse to~$s$.}
   Let us call the elements of~$S$ the {\em default orientations\/} of their underlying unoriented separations of~$V\!$.
   \looseness=-1

\subsection{Chains and cochains}\label{sec:chains}

Let $C_0$, $C_1$ and $C_2$ denote the free abelian groups, or $\Z$-modules, with bases $V\!$, $S$, and~$\es$, respectively. We write the elements of~$C_0$, the 0-{\em chains\/} of our separation system, as sums $\sum n_i v_i$ of elements~$v_i$ of~$V\!$ with integer coefficients $n_i\in\Z$. The elements of~$C_1$, its 1-{\em chains\/}, are the sums $\sum n_i s_i$ of separations~$s_i\in S$ with integer coefficients. The set $C_2 = \{0\}$ of `2-chains' consists only of the empty sum.\looseness=-1

As {\em boundary homomorphisms\/} we take the map $\partial_2\colon 0\mapsto 0$ from $C_2$ to~$C_1$ and the homomorphism $\partial_1\colon C_1\to C_0$ that sends every $s=(A,B)\in S$ to the 0-chain $\sum_{v\in B\sm A} v - \sum_{v\in A\sm B} v$ (and extends linearly to all of~$C_1$). An arbitrary~$v\in V\!$ thus has a coefficient in~$\partial_1(A,B)$ of $-1$ if $v\in A\sm B$, of~0 if $v\in A\cap B$, and of~1 if $v\in B\sm A$. In the informal language of Section~\ref{SetPartitions}, the boundary of a separation~$s$ assigns the coefficient~1 to the elements of~$V\!$ that $s$ points to, coefficient~$-1$ to those it points away from, and~0 to those in the middle (Figure~\ref{bdry}).

\begin{figure}[ht]
 \center
   \includegraphics[scale=1]{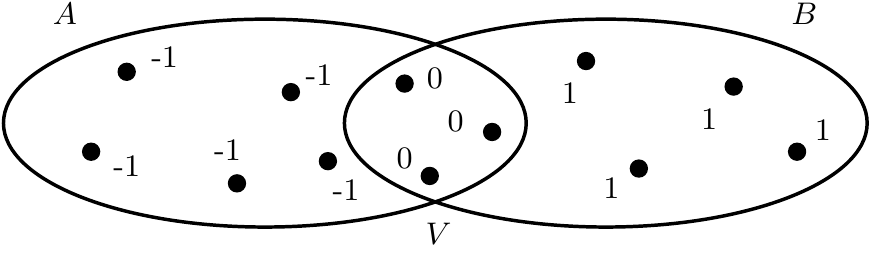}
\caption{Boundary coefficients for a separation~$s=(A,B)$}
\label{bdry}
\end{figure}

If desired, the boundary homomorphism~$\partial_1$ can be described by an $n\times m$ matrix $B$ with entries $1,0,-1$, where $n=|V|$ and $m=|S|$. For a 1-chain $x = \sum_{j=1}^m \alpha_j s_j$ we then have $\partial_1 x = \sum_{i=1}^n \beta_i v_i$ for $Ba=b$, where $a = (\alpha_1,\dots,\alpha_m)$ and $b = (\beta_1,\dots,\beta_n)$.

Graph homology is a special case of this. Indeed, given a graph~$G$ on~$V\!$, define for every oriented edge $ab$ of~$G$ the separation $s_{ab} = (A,B)$ of~$V\!$ given by $A\sm B = \{a\}$ and $B\sm A = \{b\}$, and let $S = \{\,s_{ab} \mid ab\in G\,\}$. Then $\partial_1 (s_{ab}) = b-a$, which is the usual boundary of the edge~$ab$ in simplicial homology. Conversely, any system~$S$ of separations of a set~$V\!$ as considered here can be viewed as a hypergraph on~$V\!$, with oriented edges~$(A\sm B, B\sm A)$ for the separations $(A,B)$ of~$V\!$ in~$S$. So the homology of set separations reduces to hypergraph homology, which is studied in its own right, and from a graph-theoretic perspective, in~\cite{HypergraphHomology}.

For $n=0,1,2$, the elements of $C^n = {\rm Hom}(C_n,\Z)$ are the $n$-cochains of our separation system. We usually define these homomorphisms explictly only on the singleton chains in~$C_n$, i.e., on individual elements of~$V\!$ or of~$S$ when $n=0$ or $n=1$, and extend these maps linearly to all of~$C_n$. We write $C^2 = \{0\}$, where 0~denotes the unique homomorphism $0\mapsto 0$ from $C_2 = \{0\}$ to~$\Z$.

The {\em coboundary homomorphisms\/} for $n=0,1$ are the maps $\delta^n\colon  C^n\to C^{n+1}$ that send an $n$-cochain $\varphi\colon C_n\to\Z$ to the $(n+1)$-cochain $(\varphi\circ\partial_{n+1})\colon C_{n+1}\to\Z$. For example, $\delta^0$~sends $\varphi\in C^0$ to the unique homomorphism $\psi\colon C_1\to\Z$ in~$C^1$ that maps every $s\in S\sub C_1$ to the image of~$\partial_1 s$ in~$\Z$ under~$\varphi$. If $\partial_1$ is described by a matrix~$B$ as earlier, then $\delta^0$ is described by its transpose~$B^\top$. We write~$\varphi_v$ for the element of~$C^0$ that sends $v\in V\!$ to~1 and all the other elements of~$V\!$ to~0, and $\psi_s$ for the element of~$C^1$ that sends $s\in S$ to~1 and all the other elements of~$S$ to~0.

Chains can be turned into cochains simply by interpreting their coefficients as images of the basis element to which they are assigned. Let $\gamma_0$ be the isomorphism $C_0\to C^0$ that maps every $v\in V\!$ to~$\varphi_v$, and $\gamma_1$ the isomorphism $C_1\to C^1$ that maps every $s\in S$ to~$\psi_s$. Thus, $\gamma_0$ maps $\sum_{v\in V} n_v v\in C_0$ to the 0-cochain that sends each $v\in V\!$ to~$n_v$ (and extends linearly to all of~$C_0$), and similarly for~$\gamma_1$.

To avoid clutter, we shall drop the indices 0, 1 or~2 of the maps $\partial$, $\delta$ and~$\gamma$ when they can be understood from the context, as in Figure~\ref{chain diagram}.

\begin{figure}[ht]
 \center
\begin{tikzcd}
0
\arrow[r] 
& C_1
\arrow[r, "\partialone"] \arrow[d, "\gammaone"]
& C_0
\arrow[r] \arrow[d, "\gammanought"]
&0 \\
0
& C^1 \arrow[l]
& C^0 \arrow[l, "\deltanought"]
& 0 \arrow[l]
\end{tikzcd}
\caption{Boundary and coboundary maps}
\label{chain diagram}
\end{figure}

For $x\in C_1$ we shall use the abbreviations of
$$\varphi_{\partial x} := (\gammanought\circ \partialone)(x)\in C^0\quad\text{and}\quad \psi_{\partial x} := (\deltanought\circ\gamma\circ\partial)(x) = \delta(\varphi_{\partial x}) \in C^1.$$
For $s\in S\sub C_1$ we thus have $\varphi_{\partial s} (v) = 1$ if $s$ points towards~$v$, and $\varphi_{\partial s} (v) = -1$ if $s$ points away from~$v$. This is compatible with our earlier definition of $\varphi_v$, since if $\partial x = v\in V\sub C_0$ then $\varphi_{\partial x}$ coincides with~$\varphi_v$ as defined earlier.

\medbreak

In Section~\ref{SetPartitions} we saw that, if $S$ consists of partitions of~$V\!$, the elements of~$V\!$ could in turn be viewed as partitions of~$S$ so that the (oriented) `partition' $v$ of~$S$ points to $s\in S$ if and only if the (oriented) partition $s$ of~$V\!$ points to~$v$.

Our homological setup reflects this informal duality formally in the duality between the boundary and coboundary operators $\partial$ and~$\delta$, and extends it in this form to set separations that are not partitions. This is done in Proposition~\ref{vs} below. Its proof is straightforward from the definitions, but since these are somewhat complex, we provide it for the convenience of readers unfamiliar with homological terms.

\begin{PROP}\label{vs}
For all $v\in V\!$ and $s\in S$, the coefficient of~$v$ in~$\partialone(s)\in C_0$ equals the coefficient of~$s$ in $(\gammaone^{-1}\circ\deltanought\circ\gammanought)(v)\in C_1$.
\end{PROP}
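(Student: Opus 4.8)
The statement is essentially a bookkeeping identity: I need to unravel the two definitions and check that the same integer appears on both sides. The plan is to fix $v\in V\!$ and $s=(A,B)\in S$ and compute each side explicitly.

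\textbf{Step 1: the left-hand side.} By the definition of $\partialone$, the coefficient of~$v$ in $\partialone(s)=\partialone(A,B)=\sum_{w\in B\sm A}w-\sum_{w\in A\sm B}w$ is, as recalled in the text just before the statement, equal to $1$ if $v\in B\sm A$, to $-1$ if $v\in A\sm B$, and to $0$ if $v\in A\cap B$. Call this integer $c_{v,s}$; informally, $c_{v,s}=+1$ exactly when $s$ points towards~$v$.

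\textbf{Step 2: the right-hand side.} I unfold the composition from the inside out. First, $\gammanought(v)=\varphi_v\in C^0$, the cochain sending $v\mapsto 1$ and every other element of~$V\!$ to~$0$. Next, $\deltanought(\varphi_v)=\varphi_v\circ\partialone\in C^1$: this cochain sends each $s'\in S\sub C_1$ to $\varphi_v(\partialone s')$, i.e.\ to the coefficient of~$v$ in $\partialone(s')$, which by Step~1 (applied to~$s'$) is $c_{v,s'}$. Finally, applying $\gammaone^{-1}$ turns this cochain back into a $1$-chain in $C_1$ whose coefficient on each basis element $s'\in S$ is exactly the value the cochain takes on~$s'$, namely $c_{v,s'}$; in particular the coefficient of~$s$ is $c_{v,s}$. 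Hence the right-hand side equals $c_{v,s}$ as well, and the two coefficients agree.

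\textbf{Main obstacle.} There is no real mathematical obstacle here; the only care needed is in correctly chasing the definitions of $\gammanought$, $\gammaone^{-1}$ and $\deltanought$ through the diagram in Figure~\ref{chain diagram}, keeping track of which map is "coefficients-as-values" ($\gammanought,\gammaone$) and which is "precompose with~$\partialone$" ($\deltanought$), and making sure the identification $\gammaone^{-1}(\psi)$ reads off the coefficient of~$s$ as $\psi(s)$. Once the notation is pinned down the identity is immediate, so I would present it as a short definition-chase rather than a computation.
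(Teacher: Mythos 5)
Your proof is correct and is essentially the same definition-chase as the paper's own proof; the only cosmetic difference is that you unfold the composition from the inside out while the paper reads off the coefficient of $s$ from the outside in, and both reduce to evaluating $\gammanought(v)$ on $\partialone s$ to extract the coefficient of~$v$. No issues.
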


\begin{proof}
  Let us determine the coefficient of~$s$ in $(\gammaone^{-1}\circ\deltanought\circ\gammanought)(v) = \gammaone^{-1}(\deltanought(\gammanought(v)))$. By definition of~$\gamma_1$, it is the image of~$s\in C_1$ in~$\Z$ under $\deltanought(\gammanought(v))\in C^1$. This is, by definition of~$\deltanought$, the image of $\partialone s\in C_0$ under~$\gammanought(v)\in C^0$.

The 0-chain $\partialone s$ assigns coefficients of~$-1$, 0 or~1 to all the elements of~$V\!$, as in Figure~\ref{bdry}. However only the coefficient of~$v$ in the 0-chain~$\partialone s$ matters when $\gammanought(v)\in {\rm Hom}(C_0,\Z)$ is applied to it, since this homomorphism sends all $v'\in V\sm\{v\}$ to~0. Hence the image of $\partialone s$ under $\gammanought(v)$ is just the product in~$\Z$ of~$\gammanought(v)(v)$, which is~$1\in\Z$, with the coefficient of~$v$ in~$\partialone s$, and thus equals this coefficient (as claimed).
  \end{proof}

It is not clear, but maybe worth following up, how much homology theory can inform the theory of tangles beyond the observation of Proposition~\ref{vs}. Every orientation of a set~$S$ of separations of a set~$V\!$, and in particular every tangle~$\tau$ of~$S$, can be viewed as a chain $x = \sum_{s\in S} \lambda_s s$ or a cochain $\psi\colon s\mapsto\lambda_s$, with $\lambda_s = 1$ if $\tau$ orients $s$ in its default direction and~$-1$ otherwise.

Can~$H_1$, which in our case is just the kernel of~$\partial_1$, contain any tangles? A~simple double-counting argument shows that it cannot contain tangles~$\tau$ that have a {\em decider\/}~\cite{Focus,weighted_deciders_AIC}, a~set $Z\sub V\!$ such that $\tau$ orients every $s\in S$ towards the side that contains more of~$X$ than the other side, possibly calculated with weights. But it might be instructive to compare the tangles of~$S$ to the cosets in~$C_1/\Ker\partial$. This quotient is isomorphic to the annihilator of~$\Ker\partial_1$ in~$C^1$, which contains the image of~$\delta^0$ in~$C^1$ and is often equal to it~\cite{HypergraphHomology}.

Conversely, given a 0-chain $\varphi\colon V\to\Z$ let $\tau_\varphi$ be the partial orientation of~$S$ that orients each $s\in S$ by its default orientation if~$\varphi(\partial s) > 0$, by its other orientation if $\varphi(\partial s) < 0$, and not at all if $\varphi(\partial s) = 0$. If $\tau_\varphi$ orients all of~$S$, then $\varphi$ is, in tangle terms~\cite{Focus}, a decider for~$\tau_\varphi$.

But the coboundary $\psi = \varphi\circ\partial$ preserves more of the information inherent in~$\varphi$ than $\tau_\varphi$ does. For example, if $\psi(s)$ is not just positive but large, because the $v\in V\!$ on the side of~$s$ to which it points carry a lot of weight under~$\varphi$, then this says more than just knowing that $\tau_\varphi$ orients~$s$ forward. This additional information might be relevant when we seek to understand tangles with deciders.

If we are interested specifically in finding tangles with deciders, our problem is to determine the coboundaries $x=\sum \lambda_s s$ with $\lambda_s\in \{-1,1\}$ for which there exists $\varphi\in C^0$ such that $\lambda_s \varphi(\partialone s) > 0$ for all~$s$. Equivalently, if we write $B$ for the $(|V|\times |S|)$-matrix with entries $-1,0,1$ that describes~$\partialone$, we are looking for $\lambda\in\{-1,1\}^{|S|}$ and $\mu\in\Z^{|V|}$ or~$\mu\in\R^{|V|}$ such that $[B\lambda]^t\mu > 0$. This might be approachable from an optimisation point of view, perhaps with some constraints on the weights $\mu_v$ of $\varphi = \sum \mu_v v$ such as $\mu_v\ge 0$ or $\mu_v\in\{0,1\}$ or $\sum\mu_v = 1$, or by relaxing the requirement that $\lambda\in\{-1,1\}^{|S|}$. Which of these~$\lambda$ define tangles then still has to be seen, and will depend on the notion of tangle used, but every tangle with a decider will come from such a~$\lambda$.

\subsection{An inner product for set separations}\label{sec:innerproduct}

Given a chain~$x\in C_1$, consider $\psi_{\partial x} = (\deltanought\circ\gammanought\circ\partialone)(x)$. This is a 1-cochain, a homomorphism $C_1\to\Z$. Thus, $\psi_{\partial x}$ maps every chain $y\in C_1$ to some integer $\psi_{\partial x}(y)\in\Z$. Let us denote this integer by
$$\langle x,y \rangle_\partial := \psi_{\partial x}(y)\in\Z.$$
The form $\langle\ ,\ \rangle_\partial$ is easily seen to be bilinear. It is also symmetric:

\begin{LEM}{\rm\cite{HypergraphHomology}}\label{symmetry}
Let $x,y\in C_1$. Write $\alpha_v$ and~$\beta_v$ for the coefficients of all the~$v\in V\!$ in $\partialone x$ and $\partialone y$, respectively, so that $\partialone x = \sum_{v}\alpha_v v$ and $\partialone y = \sum_{v}\beta_{v} v$. Then $\langle x,y\rangle_\partial = \sum_{v\in V} \alpha_v\beta_v = \langle y,x\rangle_\partial$.
\end{LEM}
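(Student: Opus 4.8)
The plan is to compute both sides directly from the definitions and observe that each equals the same explicit sum over $V\!$. First I would unravel the left-hand side $\langle x,y\rangle_\partial = \psi_{\partial x}(y)$. By definition $\psi_{\partial x} = (\deltanought\circ\gammanought\circ\partialone)(x)$, and $\partialone x = \sum_v \alpha_v v$ by the choice of notation in the statement. Applying $\gammanought$ turns this $0$-chain into the $0$-cochain $\varphi := \sum_v \alpha_v \varphi_v \in C^0$ that sends each $v\in V\!$ to $\alpha_v$. Then $\deltanought$ sends $\varphi$ to the $1$-cochain $\psi := \varphi\circ\partialone$, so $\psi_{\partial x}(y) = \varphi(\partialone y)$.

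Next I would evaluate $\varphi(\partialone y)$. Since $\partialone y = \sum_v \beta_v v$ and $\varphi$ is the linear extension of $v\mapsto \alpha_v$, we get $\varphi(\partialone y) = \sum_{v\in V}\alpha_v\beta_v$. This is manifestly symmetric in $x$ and $y$, so swapping their roles gives $\langle y,x\rangle_\partial = \sum_{v\in V}\beta_v\alpha_v = \sum_{v\in V}\alpha_v\beta_v$, completing the identity. The only subtlety worth spelling out is that the intermediate objects are genuine homomorphisms and that $\deltanought$ is exactly precomposition with $\partialone$, as recalled just before Proposition~\ref{vs}; once that is granted the computation is a two-line chase through Figure~\ref{chain diagram}.

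There is essentially no hard part here: the statement is really just the observation that the Gram-type pairing induced by a boundary map via its transpose is symmetric because it factors through the standard dot product on $C_0$ under the identifications $\gammanought$. If anything, the only place to be careful is bookkeeping — making sure that $\gammanought$ is applied to $\partialone x$ (not $x$ itself) and that the coefficients $\alpha_v,\beta_v$ are read off from $\partialone x,\partialone y$ rather than from $x,y$ — but the paper's notation already pins this down. Since the result is quoted from \cite{HypergraphHomology}, I would keep the proof to this short explicit computation rather than invoking any general machinery about transposes of integer matrices, though one could alternatively phrase it as: $\delta^0$ is described by $B^\top$ where $B$ describes $\partialone$, so $\langle x,y\rangle_\partial = (Ba)^\top(Bb) = a^\top B^\top B b$, which is symmetric.
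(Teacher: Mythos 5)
Your computation is correct: unravelling $\langle x,y\rangle_\partial=\psi_{\partial x}(y)=\gammanought(\partialone x)(\partialone y)=\sum_{v\in V}\alpha_v\beta_v$ is exactly the intended argument, and the symmetry is then immediate. The paper itself states this lemma without proof, citing \cite{HypergraphHomology}, so your short definition-chase (equivalently, the $a^\top B^\top B b$ formulation) is precisely what is expected here.
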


Figure~\ref{fig:innerproduct} illustrates Lemma~\ref{symmetry} when $x$ and~$y$ are single separations of~$V\!$. In this case $\langle x,y \rangle_\partial$ measures how similar $x$ and~$y$ are: it counts the $v\in V\!$ on which $x$ and $y$ agree (by both pointing towards~$v$ or both pointing away from~$v$) and deducts the number of~$v\in V\!$ on which they disagree, while ignoring those~$v$ that lie in the middle for at least one of the separations $x$ and~$y$.

\begin{figure}[ht]
 \center
   \includegraphics[scale=1]{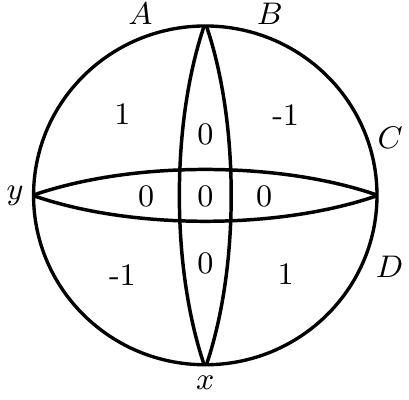}
\caption{The values of~$\alpha_v \beta_v$ for $x = (A,B)$ and $y = (C,D)$}
\label{fig:innerproduct}
\end{figure}

The intuition that when $x,y\in C_1$ are single separations then $\langle x,y \rangle_\partial$ measures their similarity is borne out most clearly when $x$ and~$y$ are partitions of~$V\!$. For $x=y$ we then have $\langle x,y \rangle_\partial = |V|$; when $x$ and~$y$ cross symmetrically as in Figure~\ref{fig:innerproduct} we get $\langle x,y \rangle_\partial = 0$; and when $y$ is the inverse of~$x$ (as a separation)%
  \footnote{By our convention, $S$~does not contain inverse pairs of separations, so this does not actually happen. But the point here is to illustrate the definition of~$\langle\ ,\ \rangle_\partial$ as given explicitly above. And that could be applied to a pair of inverse separations.}
   the definition of~$\langle\ ,\ \rangle_\partial$ gives $\langle x,y \rangle_\partial = \langle x,-x \rangle_\partial = -|V|$.

It is perhaps instructive to compare this with the canonical inner product, or `dot product', $\langle\ ,\ \rangle$ on~$C_1$. For $x,y\in C_1$ with $x = \sum_i \lambda_i s_i$ and $y = \sum_i \mu_i s_i$, where the $s_i$ run over~$S$, we have $\langle x,y \rangle = \gammaone(x)(y) = \sum_i \lambda_i\mu_i$. When $x,y\in S$, then this just indicates whether or not $x$ and~$y$ are identical: $\langle x,y \rangle = \delta_{xy}$. As a similarity measure for two separations $x$ and~$y$, it is rather cruder than~$\langle x,y \rangle_\partial$ which, by Lemma~\ref{symmetry}, is the dot product of $\partialone x$ and $\partialone y$ rather than of $x$ and~$y$.

Our $\langle\ ,\ \rangle_\partial$ is not an inner product on~$C_1$: when $x$ or $y$ is a {\em cycle\/}, an element of $\Ker\partialone$, then clearly $\langle x,y \rangle_\partial = 0$. However, setting $\langle [x],[y]\rangle_\partial := \langle x,y \rangle_\partial$  yields a well-defined symmetric bilinear form on~$C_1/\Ker\partialone$, which is an inner product there:

\begin{LEM}{\rm\cite{HypergraphHomology}}\label{lem:innerproduct}
On $C_1/\Ker\partialone$, the form $\langle\ ,\ \rangle_\partial$ is an inner product.
  \end{LEM}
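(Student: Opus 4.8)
The plan is to verify the three defining properties of an inner product for the form $\langle\ ,\ \rangle_\partial$ on the quotient $C_1/\Ker\partialone$: well-definedness, symmetry and bilinearity, and positive definiteness.

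First I would recall from Lemma~\ref{symmetry} that $\langle x,y\rangle_\partial = \sum_{v\in V}\alpha_v\beta_v$, where $\partialone x = \sum_v \alpha_v v$ and $\partialone y = \sum_v \beta_v v$. This formula immediately shows that $\langle\ ,\ \rangle_\partial$ is symmetric and bilinear on $C_1$, and that it vanishes whenever $x$ or $y$ lies in $\Ker\partialone$ (since then all $\alpha_v$ or all $\beta_v$ are zero). Consequently, if $x\equiv x'$ and $y\equiv y'$ modulo $\Ker\partialone$, writing $x' = x+k$ and $y' = y+\ell$ with $k,\ell\in\Ker\partialone$ and expanding bilinearly gives $\langle x',y'\rangle_\partial = \langle x,y\rangle_\partial$ because every term involving $k$ or $\ell$ vanishes. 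Hence $\langle [x],[y]\rangle_\partial := \langle x,y\rangle_\partial$ is well defined on $C_1/\Ker\partialone$, and it inherits symmetry and bilinearity from the form on $C_1$.

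It remains to check positive definiteness, i.e.\ that $\langle [x],[x]\rangle_\partial > 0$ whenever $[x]\ne 0$, that is, whenever $x\notin\Ker\partialone$. By Lemma~\ref{symmetry}, $\langle x,x\rangle_\partial = \sum_{v\in V}\alpha_v^2$ where $\partialone x = \sum_v\alpha_v v$. Since the $\alpha_v$ are integers, this sum is always nonnegative, and it equals zero precisely when every $\alpha_v = 0$, i.e.\ precisely when $\partialone x = 0$, that is $x\in\Ker\partialone$. Thus for $[x]\ne 0$ at least one $\alpha_v$ is nonzero and $\langle [x],[x]\rangle_\partial = \sum_v\alpha_v^2 \ge 1 > 0$. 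This establishes positive definiteness and completes the verification that $\langle\ ,\ \rangle_\partial$ is an inner product on $C_1/\Ker\partialone$.

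There is essentially no hard part here: the entire statement is an immediate consequence of the explicit formula in Lemma~\ref{symmetry}, which identifies $\langle x,y\rangle_\partial$ with the standard dot product of the integer vectors $\partialone x$ and $\partialone y$ in $\Z^V$. The only thing to be careful about is that descending to the quotient by $\Ker\partialone$ is exactly what is needed to kill the degeneracy of the form, and that over $\Z$ (or $\R$) a sum of squares is zero only when each term is. Since Lemma~\ref{symmetry} is already available, the proof is just a matter of assembling these observations.
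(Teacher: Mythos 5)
Your proof is correct. The paper itself contains no proof of this lemma---it is quoted from~\cite{HypergraphHomology}---but your verification is the standard (and essentially only) argument: by Lemma~\ref{symmetry} the form $\langle\ ,\ \rangle_\partial$ is the pullback of the Euclidean dot product on $C_0\cong\Z^{V}$ along $\partialone$, so it is symmetric and bilinear with radical exactly $\Ker\partialone$, hence descends to a well-defined, positive definite form on $C_1/\Ker\partialone$.
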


It would be interesting see how the definition of~$\langle\ ,\ \rangle_\partial$ plays out in some concrete natural separation systems~-- e.g., whether orthogonality has a natural interpretation in that context. Here is one example.

When $S$ is the set of partitions of the unit disc~$D$ by straight lines through the origin, orthogonality for $\langle\ ,\ \rangle_\partial$ agrees with geometric orthogonality. Indeed, if $V\!$ is some suitable discretisation of~$D$ then, as Figure~\ref{fig:innerproduct} indicates, we have $\langle x,y \rangle_\partial = 0$ for two such partitions $x$ and~$y$ if and only if $V\!$ has equally many elements in the two opposite quadrants marked~1 as in the two quadrants marked~$-1$. Clearly, this will be the case if and only if the straight lines through the origin that determine the partitions $x$ and~$y$ are geometrically orthogonal.

\medbreak

If we think of $\langle\ ,\ \rangle_\partial$ as taking real values, its associated norm $\|x\| = \sqrt{\langle x,x\rangle_\partial}$ is also interesting when $x$ is a single separation. Indeed, for $s = (A,B)\in S$ and $\partialone s = \sum_v \alpha_v v$ we have $\langle s,s\rangle_\partial = \sum_v \alpha_v^2$ with $\alpha_v = 0$ for $v\in A\cap B$ and $\alpha_v^2=1$ for all other~$v$. Hence $\|s\| = \sqrt{|V|-|s|}$, where $|s|:= |A\cap B|$ is the order of the separation~$s$.

In other words, for general set separations our algebraic setup encodes their standard submodular order function $s\mapsto |s|$ without this having been written into the setup in any way.

Our function $\langle s,s\rangle_\partial$ is also submodular (as well as supermodular) on~$S$, if $S$ is a sublattice of the lattice (or {\em universe\/})~$U$ of all the separations of~$V\!$. Indeed, if for all $r,s\in S$ their infimum $r\land s$ and their supremum $r\lor s$ in~$U$ is again in~$S$, and thus has a norm, then
$$\|r\land s\|^2 + \|r\lor s\|^2 = 2|V| - |r\land s| - |r\lor s| =  2|V|  - |r| - |s| = \|r\|^2 + \|s\|^2 $$
for all $r,s\in S$.

The norm $\|\ \|$ itself is not submodular on~$S$: one can easily construct separations $r,s$ of~$V\!$ such that $\|r\land s\| + \|r\lor s\| > \|r\| + \|s\|$. However, as we shall see in a moment, we do have for~$\|\ \|$ the most important structural implication of the submodularity of an order function, which is that for all $r,s$ of order less than some integer~$k$ either $r\land s$ or $r\lor s$ also has order~$< k$.

Separation systems with this property, i.e., which contain for any two of their elements also their infimum or supremum in some given universe, are called (structurally) {\em submodular\/}~\cite{ASS}. In our context, the separation systems
$$S_{k,\|\ \|} := \{\,s\in S : \|s\| < k\,\}$$
for fixed $k\in\N$ are submodular:

\begin{PROP}
If $S$ contains both $r\land s\in U$ and $r\lor s\in U$ whenever $r,s\in S$, then for every integer~$k$ the separation system $S_{k,\|\ \|}\sub S$ is submodular.
\end{PROP}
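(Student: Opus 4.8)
The plan is to reduce the statement entirely to the modular identity for the squared norm that was just recorded: whenever $r,s\in S$ and both $r\land s$ and $r\lor s$ (computed in $U$) again lie in $S$, one has $\|r\land s\|^2 + \|r\lor s\|^2 = \|r\|^2 + \|s\|^2$. This identity is really just the modularity of the order function $s\mapsto|s|$ on $U$, translated through $\|s\|^2 = |V|-|s|$, and the hypothesis of the Proposition is precisely what licences us to apply it to every pair $r,s\in S$. Once we have the identity, structural submodularity of $S_{k,\|\ \|}$ is immediate by a one-line counting argument.

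\textbf{Key steps.} First I would note that $S_{k,\|\ \|}$ is genuinely a separation system: it is closed under the involution $^\ast$ because $|s|=|s^\ast|$ and hence $\|s\|=\|s^\ast\|$. It then remains only to verify the defining property of a (structurally) submodular separation system, namely that for any two of its elements it contains their infimum or their supremum (taken in $U$). So fix $r,s\in S_{k,\|\ \|}$, i.e.\ $\|r\|<k$ and $\|s\|<k$. By the hypothesis on $S$, both $r\land s$ and $r\lor s$ lie in $S$, and therefore each has a well-defined norm, so the modular identity applies and gives $\|r\land s\|^2 + \|r\lor s\|^2 = \|r\|^2 + \|s\|^2 < 2k^2$. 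Since all the norms involved are non-negative reals, at least one of $\|r\land s\|^2$ and $\|r\lor s\|^2$ is strictly below $k^2$, hence at least one of $\|r\land s\|$ and $\|r\lor s\|$ is strictly below $k$. That separation lies in $S_{k,\|\ \|}$, which is exactly what is required.

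\textbf{Main obstacle.} There is essentially none: the mathematical content sits entirely in the modular identity, which has already been proved, combined with the trivial fact that for non-negative reals $a,b$ with $a^2+b^2<2k^2$ at least one of $a,b$ is less than $k$. The only point that deserves to be made explicit is that passing from the inequality on squared norms to the inequality on the norms themselves is legitimate because $\|\ \|$ is non-negative; and that invoking the identity is permitted precisely because the Proposition's hypothesis guarantees $r\land s,r\lor s\in S$ (so that they have norms in the first place).
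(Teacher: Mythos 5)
Your proof is correct and follows essentially the same route as the paper: both arguments apply the (sub)modularity of the squared norm to $r,s$ and observe that a sum of two non-negative squares cannot exceed a bound unless one summand stays below the relevant threshold (the paper bounds by $2\|s\|^2$ with $\|s\|=\max$, you bound directly by $2k^2$ --- a negligible difference).
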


\proof As $\langle\ ,\ \rangle_\partial$ is submodular, we have $\|r\land s\|^2 + \|r\lor s\|^2 \le \|r\|^2 + \|s\|^2$ for all $r,s\in S$. Assume that $\|r\|^2\le \|s\|^2$. Then  $\|r\land s\|^2 \le \|s\|^2$ or $\|r\lor s\|^2 \le \|s\|^2$, and correspondingly $\|r\land s\| \le \|s\|$ or $\|r\lor s\| \le \|s\|$. Hence if $r$ and~$s$ lie in~$S_{k,\|\ \|}$ then so does $r\land s$ or~$r\lor s$.
  \endproof

\bibliographystyle{plain}
\bibliography{collective}

\begin{thebibliography}{10}

\bibitem{DiestelBook16noEE}
R.~Diestel.
\newblock {\em {Graph Theory}}.
\newblock Springer, 5th edition, 2017.

\bibitem{TangleBook}
R.~Diestel.
\newblock {\em Tangles: indirect clustering in the empirical sciences}.
\newblock In preparation.

\bibitem{ASSdualityArxiv}
R.~Diestel, J.~Erde, C.~Elbracht, and M.~Teegen.
\newblock Duality and tangles of set partitions (extended version).
\newblock arXiv:2109.08398v1.

\bibitem{Big5}
R.~Diestel, J.~Erde, E.~G{\"u}rpinar, and H.~von Bergen.
\newblock Character traits and tangles.
\newblock In preparation.

\bibitem{AbstractTangles}
R.~Diestel, J.~Erde, and D.~Wei{\ss}auer.
\newblock Structural submodularity and tangles in abstract separation systems,
  ar{}xiv:1805.01439.
\newblock {\em J. Combin. Theory (Series~A)}, 167C:155--180, 2019.

\bibitem{ASS}
Reinhard Diestel.
\newblock Abstract separation systems.
\newblock {\em Order}, 35:157--170, 2018.

\bibitem{HypergraphHomology}
Reinhard Diestel.
\newblock Homological aspects of oriented hypergraphs.
\newblock arXiv:2007.09125, 2021.

\bibitem{Focus}
Reinhard Diestel, Christian Elbracht, and Raphael Jacobs.
\newblock Deciders for tangles of set separations.
\newblock ar{X}iv:2107.01087, 2021.

\bibitem{weighted_deciders_AIC}
C.~Elbracht, J.~Kneip, and M.~Teegen.
\newblock Tangles are decided by weighted vertex sets.
\newblock {\em Advances in Comb.}, 2020; doi.org/10.19086/aic.13691,
  arXiv:1811.06821.

\bibitem{TangleClusteringWeakStrong}
Christian Elbracht, Diego Fioravanti, Solveig Klepper, Jakob Kneip, Luca
  Rendsburg, Maximilian Teegen, and Ulrike von Luxburg.
\newblock Tangles: from weak to strong clustering.
\newblock arXiv:2006.14444, 2020.

\bibitem{TangleOrder}
J.~Erde and R.~Pendavingh.
\newblock Order functions for tangles of set separations.
\newblock In preparation.

\bibitem{GMX}
N.~Robertson and P.D. Seymour.
\newblock Graph minors. {X}. {O}bstructions to tree-decomposition.
\newblock {\em J.~Combin.\ Theory (Series B)}, 52:153--190, 1991.

\bibitem{ShannonEntropy}
C.~E. Shannon.
\newblock A mathematical theory of communication.
\newblock {\em The Bell System Technical Journal}, 27:379--423, 623--656, 1948.

\end{thebibliography}

\end{document}